\documentclass[sigconf]{amsart}

\usepackage{amssymb}

\usepackage{graphicx}
\usepackage{subcaption}

\usepackage[backend=biber,
        style=alphabetic,citestyle=alphabetic,
        maxnames=99,
        url=false,isbn=false,
        hyperref=true,backref=true]{biblatex}

\DefineBibliographyStrings{english}{
backrefpage = {$\uparrow$},
backrefpages = {$\uparrow$}
}
\addbibresource{bibliography.bib}

\usepackage{tikz}
\usepackage{tikz-3dplot}

\usepackage{mathrsfs}
\usepackage{enumitem}

\renewcommand{\P}{\mathbb{P}}
\newcommand{\SL}{\textbf{SL}}
\newcommand{\FO}{\textbf{FO}}
\newcommand{\OP}{\textbf{OP}}

\newcommand{\Span}{\text{span}}

\newcommand{\Div}{\text{div}}

\newcommand{\A}{\mathscr{A}}
\newcommand{\LL}{\mathscr{L}}
\newcommand{\OO}{\mathcal{O}}

\setcounter{tocdepth}{1}

\usepackage{hyperref}
\usepackage[capitalise]{cleveref}

\theoremstyle{definition}
\newtheorem{dfn}{Definition}[section]

\newcounter{letteredthm}
\newtheorem{letteredthm}[letteredthm]{Theorem}
\renewcommand*{\theletteredthm}{\Alph{letteredthm}}
\theoremstyle{plain}
\newtheorem{thm}[dfn]{Theorem}
\newtheorem{prop}[dfn]{Proposition}
\newtheorem{lem}[dfn]{Lemma}

\newtheorem{cor}[dfn]{Corollary}
\theoremstyle{remark}
\newtheorem{zbbackend}[dfn]{Example}

\newenvironment{zb}[1][]{\begin{zbbackend}[#1]}{\end{zbbackend}}
\newtheorem{rmk}[dfn]{Remark}

\newlist{thmenum}{enumerate}{1} 
\setlist[thmenum]{label=(\roman*), ref=\thethm~(\roman*)}
\crefalias{thmenumi}{theorem} 

\newlist{letteredthmenum}{enumerate}{1} 
\setlist[letteredthmenum]{label=(\roman*), ref=\theletteredthm~(\roman*)}
\crefalias{letteredthmenumi}{theorem} 

\newlist{propenum}{enumerate}{1} 
\setlist[propenum]{label=(\roman*), ref=\thethm~(\roman*)}
\crefalias{propenumi}{proposition} 

\newlist{corenum}{enumerate}{1} 
\setlist[corenum]{label=(\roman*), ref=\thethm~(\roman*)}
\crefalias{corenumi}{corollary}

\newcommand{\C}{\mathbb{C}}
\newcommand{\R}{\mathbb{R}}
\newcommand{\Q}{\mathbb{Q}}
\newcommand{\Z}{\mathbb{Z}}

\renewcommand{\P}{\mathbb{P}}

\DeclareMathOperator{\Hom}{Hom}

\DeclareMathOperator{\Spec}{Spec}
\DeclareMathOperator{\Sym}{Sym}



\usepackage{xcolor}
\usepackage{tikz-cd}
\crefalias{prop}{proposition}
\usepackage{color}

\usepackage{array}
\newcolumntype{L}[1]{>{\raggedright\let\newline\\\arraybackslash\hspace{0pt}}m{#1}}

\title{Hyperplane arrangements and compactifications of vector groups}
\author{Colin Crowley}
\address{
University of Wisconsin–Madison
}
\email{cwcrowley@wisc.edu}

\begin{document}

\begin{abstract}
Matroid Schubert varieties have recently played an essential
role in the proof of the Dowling-Wilson conjecture and in
Kazhdan-Lusztig theory for matroids. We study these varieties as
equivariant compactifications of affine spaces, and give necessary and
sufficient conditions to characterize them. We also
generalize the theory to include partial compactifications and
morphisms between them. Our results resemble the
correspondence between toric varieties and polyhedral fans. 
\end{abstract}

\maketitle

\section{Introduction}
A matroid Schubert variety is a singular algebraic variety 
constructed from a hyperplane arrangement, which is not a Schubert 
variety, but which plays the role of
a Schubert variety in
the role it plays in Kazhdan-Lusztig theory for matroids \cite{BHM+20b}.
Let $H_1,\ldots,H_n$ be a collection of linear hyperplanes in a finite 
dimensional complex vector space $V$. Assume that the arrangement is 
\emph{essential}, meaning that the common intersection of the 
hyperplanes is zero.
The associated matroid Schubert variety is the closure of $V$ in 
$(\P^1)^n$ via the embedding
\[
V \subseteq V/H_1 \times \ldots \times V/H_n \subseteq \P^1 \times \ldots \times \P^1.
\]

Matroid Schubert varieties were first studied by \cite{AB16}, where the 
authors showed that the combinatorics of the matroid associated to the 
arrangement determined much of the geometry of the variety.
The intersection cohomology of matroid Schubert varieties was used in \cite{HW17} to prove Dowling and Wilson's Top
Heavy conjecture for matroids in the realizable case.

The affine reciprocal plane of an essential hyperplane 
arrangement is the intersection of the matroid Schubert variety with the 
affine chart 
$(\C^\times \cup \{\infty\})^n \subseteq (\P^1)^n$. Reciprocal planes 
are studied in \cite{T02,SP06,EPW16,KV19}, were the authors also observe a 
two way street between the combinatorics of arrangements and the geometry of 
their reciprocal planes.
The intersection cohomology of the projectivized reciprocal plane was used in \cite{EPW16} to prove
that the coefficients of the Kazhdan-Lusztig polynomial of a
realizable matroid are non-negative.

In this paper we study the geometry of matroid Schubert varieties 
through the lens of equivariant
compactifications. If $G$ is an algebraic group, then an equivariant compactification of $G$ is a proper variety $X$ containing $G$ as a dense open set, and an action $G \times X \to X$ extending the group law $G \times G \to G$. With the word ``proper'' omitted, we call $X$ an equivariant partial compactification. For example, a toric variety is by definition an equivariant partial compactification of the algebraic torus $T = (\C^\times)^d$. One of the main theorems in toric geometry states that all normal toric varieties arise from polyhedral fans.
A matroid Schubert variety is
an equivariant compactification of the additive group $V = \C^d$, which 
we will call a \emph{vector group}.

To see the equivariant structure, note that $\C \subseteq \P^1$ is an 
equivariant compactification of the additive group $\C$, and so $\C^n 
\subseteq (\P^1)^n$ is an equivariant compactification of $\C^n$. 
Therefore the closure of any subgroup $V \subseteq \C^n$ in $(\P^1)^n$ is an 
equivariant compactification of $V$, because the action of $V$ on 
itself preserves its closure.

The main purpose of this paper is to 
prove the following characterization of which equivariant 
compactifications of vector groups arise as matroid Schubert varieties.

\begin{thm}\label{quick-main-thm}
  An equivariant compactification $Y$ of the vector group $V = \C^d$
  is isomorphic to a matroid Schubert variety if
  and only if $Y$ is normal as a variety, $Y$ has only finitely many 
  orbits, and each orbit contains a point which can be reached by a
  limit $\lim_{t \to \infty} tv$, for $v \in V$.
\end{thm}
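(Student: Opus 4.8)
The plan is to treat this as a statement in the style of the toric dictionary between a variety and its combinatorial datum, proving the two implications separately. For the ``only if'' direction I would start from the known combinatorics of matroid Schubert varieties: by the results of Ardila--Boocher \cite{AB16}, such a variety $\overline{V} \subseteq (\P^1)^n$ is normal and admits a stratification by the flats of the underlying matroid, so it has only finitely many $V$-orbits. It then remains to verify the limit condition, which I would do by an explicit computation inside $(\P^1)^n$: for $v$ in the relative interior of the subspace cut out by a flat $F$, I would compute $\lim_{t\to\infty}tv$ coordinatewise, observing that the $i$-th coordinate tends to $\infty$ precisely when $v$ is transverse to $H_i$, and check that the resulting point lies in the stratum indexed by $F$. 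This exhibits a limit point in every orbit.

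The substance of the theorem is the ``if'' direction, and here the first task is to reconstruct a hyperplane arrangement from $Y$ alone. Because $V \iso \C^d$ is a vector group, every Zariski-closed subgroup is a linear subspace, so each of the finitely many orbits $O$ is equivariantly isomorphic to $V/W_O$ for the linear subspace $W_O = \Stab O$, with the open dense orbit $V$ corresponding to $W = 0$. I would assemble the collection $\mathcal{W} = \{W_O\}$ of these stabilizers and argue that it is exactly the intersection lattice of an arrangement: the limit condition attaches to each asymptotic direction $v$ the stabilizer of the orbit containing $\lim_{t\to\infty}tv$, and comparing these across directions should show that $\mathcal{W}$ is closed under intersection and is generated, under intersection, by its codimension-one members $H_1,\dots,H_n$. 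Those $H_i$ are my candidate hyperplanes, and properness of $Y$ forces their common intersection to be trivial (essentiality), since otherwise a nonzero direction in $\bigcap_i H_i$ could not limit into the boundary consistently; finiteness of the orbit set bounds $n$.

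With the arrangement in hand I would build the comparison morphism. Each quotient $V \to V/H_i \iso \C \subseteq \P^1$ is a priori only a rational map $Y \dashrightarrow \P^1$; I would promote it to a morphism using equivariance together with the limit condition, which pins down the value on the codimension-one orbit transverse to $H_i$ as the point $\infty$, while normality of $Y$ handles the remaining loci of codimension at least two. Assembling these gives a $V$-equivariant morphism $\phi\colon Y \to (\P^1)^n$ restricting to the standard embedding on $V$, so its image lies in the matroid Schubert variety $\overline{V}$ and $\phi\colon Y \to \overline{V}$ is birational. Since $Y$ is proper, $\phi$ is proper; matching stabilizers and limit points orbit by orbit shows $\phi$ is bijective. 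Normality of $Y$ then upgrades this proper birational bijection to an isomorphism by Zariski's Main Theorem, completing the identification $Y \iso \overline{V}$.

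I expect the main obstacle to be establishing that the stabilizer subspaces $\mathcal{W}$ genuinely form the intersection lattice of an arrangement, and correspondingly that $\phi$ is bijective rather than merely birational. This is exactly the point at which the three hypotheses must interact: finiteness gives a finite arrangement, normality is what licenses the final Zariski's-Main-Theorem step, but it is the limit condition that forces the boundary of $Y$ to be \emph{linear}, ruling out the exotic non-matroidal equivariant compactifications of $\C^d$ and playing the role that the support and convexity of a fan play in the orbit--cone correspondence for toric varieties. I would therefore concentrate the technical effort on the local model of $Y$ near a codimension-one orbit, where the limit condition should let me identify the transverse direction with the standard compactification $\C \subseteq \P^1$ and thereby control both the extension of the coordinate maps and the injectivity of $\phi$.
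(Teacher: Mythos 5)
Your overall architecture (read off the arrangement from the stabilizers, build a comparison morphism to $(\P^1)^n$, finish with Zariski's main theorem) is the same as the paper's, but the two steps you yourself flag as the main obstacles are genuine gaps, and in both cases the missing ingredient is the same: the paper first upgrades the limit condition to the existence of Zariski slices (\cref{1psg-to-slices}) and then leans on the slice structure for everything else. First, ``comparing limits across directions'' does not show that the collection $\mathcal{W}$ of stabilizers is the lattice of flats of an essential arrangement. Closure under intersection is proved in the paper by showing each minimal slice is \emph{proper} with an algebraic cell decomposition (\cref{prop:proper-slice}) and then applying the Borel fixed point theorem to $\overline{V_x\cap V_y}$ inside a slice. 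The harder half --- that every stabilizer of codimension at least two is an intersection of codimension-one stabilizers, which is what produces enough hyperplanes and forces essentiality --- needs a separate idea: a priori nothing in your setup rules out a $Y$ whose only stabilizers are $\{0\}$ and $V$, with no codimension-one members of $\mathcal{W}$ at all. The paper excludes this by extending a nonconstant regular function from a boundary orbit to its invariant neighborhood and invoking the fact that an open subvariety with nonconstant global functions has codimension-one boundary in any compactification (\cref{codim-one-boundary}); see the proof of \cref{prop:stabs-are-flats}. Some argument of this strength is required, and your proposal does not contain one.

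Second, the assertion that ``normality of $Y$ handles the remaining loci of codimension at least two'' when promoting $V\to V/H_i\subseteq\P^1$ to a morphism on $Y$ is false as stated. Normality extends \emph{regular functions} (maps to $\mathbb{A}^1$) over codimension-two loci, but not maps to $\P^1$: the map $(x,y)\mapsto[x:y]$ on the smooth variety $\mathbb{A}^2$ is undefined at the origin. Your $Y$ has invariant strata of codimension at least two (for instance the fixed point guaranteed by Borel), and the rational map must be shown to be defined there. The paper does this by exhibiting the closure $Z_i$ of $H_i$ as a Cartier divisor that is a fiber of a trivial $V$-equivariant fibration onto $V/H_i$, so that $Z_i$ is disjoint from a linearly equivalent translate; this makes $\OO_X(Z_i)$ globally generated with a two-dimensional space of sections, and the associated complete linear system \emph{is} the morphism $T_i:Y\to\P^1$, with no indeterminacy to remove. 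Both the Cartier property and the disjoint translate come from the slice structure, so the efficient repair of your argument is to prove \SL{} from \OP{} first, as the paper does, rather than attempting the construction of $\phi$ directly from the limit condition.
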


The limit condition in the above theorem is analogous to the fact
that any orbit in a normal toric variety can be reached by a
one-parameter subgroup of the torus.
Because the fan corresponding to a normal toric variety is constructed
by considering the limits of one-parameter subgroups, it is natural to look for an analogous correspondence only for equivariant
compactifications of $V$ where every orbit is reached by a
one-variable limit.

In the course of proving the above theorem, we give another 
characterization in which the limit condition is replaced by the 
stronger condition that each orbit admits a normal slice satisfying 
certain properties. The second characterization resembles a key 
geometric property of matroid Schubert varieties: for 
each flat in a hyperplane arrangement, the matroid Schubert variety of the 
restriction is a normally nonsingular slice through the corresponding 
orbit \cite{BHM+20b}.

Finally, we prove an equivalence of categories which generalizes both characterizations to include partial compactifications as well as morphisms between them. The objects in the first category are equivariant partial compactifications of $V$ satisfying the conditions of the above theorem, or the stronger formulation involving normal slices. The objects in the second category we call \emph{partial hyperplane arrangements}, which include all essential hyperplane arrangements as examples.

\subsection{Equivariant compactifications}

We assume all varieties are irreducible and separated
over $\C$. Suppose that $G$ is a commutative linear algebraic 
group, acting on a variety $X$. Given a point $x \in X$, we write $G
\cdot x \subseteq X$ for the orbit and $G_x \subseteq G$ for the
stabilizer.

The main tool we will use throughout the paper is the following notion of a \emph{slice} (\cref{slice-dfn}), which is standard for 
actions of Lie groups and algebraic groups \cite{G50,M57,MY57,P61}. A
(Zariski) slice through $x \in X$ is a $G_x$-stable subvariety
$Z_x \subseteq X$ containing $x$, such that $G \cdot Z_x \subseteq X$ is an open set, and
\[
G \cdot Z_x \cong G \times Z_x/\sim,\quad \text{where $(gh,z) \sim (g,hz)$ for all $g \in G, h \in G_x, z \in Z_x$.}
\]
Geometrically, $Z_x$ is a normal slice through the orbit $G \cdot x$, 
and $G \cdot Z_x$ is neighborhood of $G \cdot x$ that admits a
product structure, similar to a tubular neighborhood. We use the words ``Zariski slice'' to emphasize the difference between the above notion and that of an \'{e}tale slice.

In order to state our results, we make the following abbreviations. Suppose now that $X$ is an equivariant partial compactification of $G$. We say $X$ satisfies
  \begin{itemize}[leftmargin=*]
  \item \FO{} (\textbf{F}inite \textbf{o}rbits) if $X$ has finitely many 
    $G$-orbits,
  \item \OP{} (\textbf{O}ne-\textbf{p}arameter subgroups) if for every 
    $G$-orbit $G \cdot x \subseteq X$, there is a one dimensional 
      algebraic subgroup of $G$ whose closure in $X$ intersects $G\cdot 
      x$, and
  \item \SL{} (\textbf{Sl}ices) if there exists a Zariski slice through every point of $X$.
\end{itemize}

The following is our main result on matroid Schubert varieties, which implies \cref{quick-main-thm}.
\begin{letteredthm}\label{main-thm-compact}
  Suppose that $Y$ is an equivariant compactification of the vector group $V = \C^d$. Then the following are equivalent.
  \begin{enumerate}[leftmargin=*]
    \item \label{main-thm-schubert-variety} $Y$ is equivariantly 
      isomorphic to a matroid Schubert variety.
    \item \label{main-thm-slices} $Y$ is normal and satisfies \FO{} and \SL{}.
    \item \label{main-thm-1psg} $Y$ is normal and satisfies \FO{} and \OP{}.
  \end{enumerate}
\end{letteredthm}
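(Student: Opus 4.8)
The plan is to prove the cycle of implications (i) $\Rightarrow$ (ii) $\Rightarrow$ (iii) $\Rightarrow$ (i), where (i) is the matroid Schubert variety condition, (ii) is the \SL{} condition, and (iii) is the \OP{} condition. For (i) $\Rightarrow$ (ii), I would check that every matroid Schubert variety is normal, has finitely many orbits, and carries a Zariski slice through each point. Normality is established in \cite{AB16}, and the orbits are indexed by the flats of the underlying matroid, so \FO{} holds. For \SL{}, the natural candidate for a slice through a point of the orbit attached to a flat $F$ is the matroid Schubert variety of the restricted arrangement along $F$; that this is a normally nonsingular slice through the corresponding orbit is recorded in \cite{BHM+20b}, and I would verify that it satisfies the defining product structure $G \cdot Z_x \cong G \times_{G_x} Z_x$ of a Zariski slice.

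For (ii) $\Rightarrow$ (iii), I would induct on $d = \dim V$. Given an orbit $G \cdot x$ that is not the open orbit (for which \OP{} is immediate), let $Z_x$ be a Zariski slice through $x$. The stabilizer $G_x = \Stab x$ is a linear subspace $W \subseteq V$, and $x$ is a $G_x$-fixed point of $Z_x$; comparing $\dim Z_x = \dim Y - \dim(G\cdot x)$ with $\dim W$ shows that $G_x$ acts on $Z_x$ with a dense free orbit, so $Z_x$ is itself an equivariant partial compactification of the vector group $W$, of dimension strictly less than $d$. I would then check that the product structure on $G \cdot Z_x$ makes $Z_x$ inherit normality, \FO{}, and \SL{}, so the inductive hypothesis produces a one-parameter subgroup of $W \subseteq V$ whose closure in $Z_x$ contains $x$; this same subgroup witnesses \OP{} for $G\cdot x$ in $Y$.

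The heart of the theorem is (iii) $\Rightarrow$ (i). Since $Y$ is normal and proper and $V \subseteq Y$ is affine and open, the boundary $Y \setminus V$ is a union of prime divisors $D_1, \dots, D_m$, each $V$-stable with a dense orbit of codimension one. To each $D_i$ I would attach an equivariant morphism $\pi_i \colon Y \to \P^1$ extending a linear functional $\ell_i \in V^\dual$, characterized as the unique (up to scale) equivariant map whose polar locus is $D_i$; concretely $\ell_i$ is determined by the order function $\mathrm{ord}_{D_i}$ on the linear forms $V^\dual \subseteq \C[V]$, the forms regular along $D_i$ forming a hyperplane whose quotient is spanned by $\ell_i$. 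Writing $H_i = \ker \ell_i$, I would show the $\ell_i$ span $V^\dual$: if $0 \neq v \in \bigcap_i H_i$, then along the line $\C v$ every $\ell_i$ is constant, so the limit $p = \lim_{t \to \infty} tv \in Y\setminus V$ would satisfy $\pi_i(p) = 0$ for all $i$, contradicting that $p$ lies on some $D_j$ where $\pi_j = \infty$. The functionals then assemble into an equivariant morphism $\phi = (\pi_1, \dots, \pi_m)\colon Y \to (\P^1)^m$ whose restriction to $V$ is the defining embedding of the matroid Schubert variety $Y_L$ of the essential arrangement $\{H_i\}$, so that $\phi(Y) = \overline{\phi(V)} = Y_L$.

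It remains to prove that $\phi \colon Y \to Y_L$ is an isomorphism, which I expect to be the main obstacle. The map is proper, surjective, and restricts to an isomorphism on the open orbit, so by Zariski's main theorem and the normality of $Y$ it suffices to prove $\phi$ injective, that is, that the divisors $D_i$ separate the orbits of $Y$. Here I would use \FO{} and \OP{} to match the orbits of $Y$ with the flats of $\{H_i\}$: an orbit reached by $\lim_{t\to\infty} tv$ is recorded by the set $\{ i : \ell_i(v) \neq 0\}$ together with the finite coordinates of its limit, and I would argue that this data pins down the orbit and agrees with the corresponding orbit of $Y_L$. The delicate point is to rule out that $Y$ has strictly more orbits than $Y_L$, or that $\phi$ merges two orbits; I anticipate that controlling this requires upgrading \OP{} to the local slice structure of (ii) along each orbit closure, so that the transverse geometry of $Y$ can be compared with that of $Y_L$ flat by flat. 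Once injectivity is established, Zariski's main theorem gives that $\phi$ is an isomorphism, closing the cycle.
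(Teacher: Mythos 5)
Your skeleton (a cycle of implications closed by Zariski's main theorem) matches the paper's in outline, but the central step (iii) $\Rightarrow$ (i) rests on a misidentification. In a matroid Schubert variety the hyperplanes of the arrangement do \emph{not} correspond to the boundary divisors: the orbit attached to a flat $F$ has dimension $\codim F$, so the codimension-one orbits (hence the irreducible components of $Y \setminus V$) correspond to the \emph{coatoms} of the lattice of flats (one-dimensional flats), while the hyperplanes correspond to the \emph{one-dimensional} orbits. Your recipe for $\ell_i$ fails accordingly. First, for three generic lines in $\C^2$ every nonzero linear form has poles along at least two of the three boundary divisors, so there is no equivariant map to $\P^1$ with polar locus a single $D_i$. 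Second, even where the order function makes sense, the forms regular along $D_i$ are exactly those vanishing on the coatom line $L_i$, so $\ell_i$ is determined only modulo $L_i^\perp$ and $\ker \ell_i$ is not well defined; the invariant object you actually extract is $L_i$, not a hyperplane. The paper instead first proves that the set of stabilizers of points of $Y$ is the lattice of flats of an essential arrangement (this is Proposition 3.5, whose proof needs a genuine argument: every stabilizer is an intersection of codimension-one stabilizers, shown by extending nonconstant regular functions from orbits and invoking the fact that a compactification of a variety with nonconstant functions has a codimension-one boundary component). It then takes, for each hyperplane $H_i$ (a codimension-one \emph{stabilizer}), the divisor $Z_i = \overline{H_i} \subseteq Y$ --- the slice through the one-dimensional orbit, a fiber of the trivial fibration $V *_{H_i} Z_i \to V/H_i$ --- whose translates form a base-point-free pencil; a cohomology computation shows $h^0(\OO_Y(Z_i)) = 2$, yielding $T_i \colon Y \to \P^1$ extending $V \to V/H_i$. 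Injectivity of the product map (which you leave open) is then settled by the orbit--distinguished point--stabilizer correspondence, not by further transversality analysis.

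There is also a gap in your (ii) $\Rightarrow$ (iii). The induction on $d$ breaks at fixed points: if $G_x = V$ then $Z_x$ is a $V$-stable open neighborhood of $x$ containing $V$, hence a partial compactification of $V$ itself of dimension exactly $d$, so the dimension does not drop. The paper avoids induction entirely: it shows the minimal slice through any point is \emph{proper} with a \emph{unique} $V_x$-fixed point, via the observation that the $V_x$-orbits give an algebraic cell decomposition and a compactly-supported-cohomology count forces properness and a single zero-cell; then for $v \in V_x^\circ$ the limit $\lim_{t\to\infty} tv$ exists in $Z_x$ and must equal $x$. (Your (i) $\Rightarrow$ (ii) is essentially the paper's, carried out in coordinates in its appendix.)
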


The original aim of this project was to prove that the third statement 
implies the first, however we have found that it is most natural to 
prove that the third statement implies second, and then prove that the 
second implies the first. For this reason, we view the existence of 
slices as a more fundamental property of matroid Schubert varieties.

The study of equivariant compactifications of vector groups was
initiated by \cite{HT99}, and we recommend \cite{AZ20} for a
survey. We will see in \cref{toric-analogy-section} that matroid 
Schubert varieties show 
many parallels to toric varieties, however the study of general equivariant compactifications 
of vector groups has little in common with toric geometry \cite{AZ20}.  
In particular, toric varieties satisfy \FO{}, \OP{}, and \SL{}, whereas
these conditions need not hold for equivariant compactifications of
vector groups, as the following examples show.

  \begin{zb}
    Consider the action of a vector group $V$ of dimension at
      least two on its projective closure $\P (V \oplus \C)$, where the
      action on
      the boundary is trivial. This compactification satisfies \SL{} and \OP{}, but not \FO{}.
\end{zb}
  \begin{zb}[\cite{HT99}]
    Consider the action of the two-dimensional
      vector group $\C^2$ on $\P^2$ where $(a_1,a_2)$ acts
      as
      \[\text{exp}\left(\begin{bmatrix}
        0 & a_1 & a_2\\
        0 & 0 & a_1\\
        0 & 0 & 0\\

      \end{bmatrix}\right) = 
      \begin{bmatrix}
        1 & a_1 & a_2 + \frac{1}{2} a_1^2\\
        0 & 1   & a_1\\
        0 & 0   & 1
      \end{bmatrix}.
      \]
      This action has one two-dimensional orbit (with which we can
      identify $\C^2$), one one-dimensional orbit, and one
      zero-dimensional orbit, so \FO{} holds. However \SL{} and \OP{}
      fail for the one dimensional orbit.
  \end{zb}

\subsection{Equivariant partial compactifications}
We now describe how \cref{main-thm-compact} extends to equivariant 
partial compactifications of vector groups, as well as morphisms between them. We define a \emph{morphism of equivariant compactifications of vector groups} to be a map of varieties, which restricts to a linear map from the first vector group to the second.

Let us first review some hyperplane arrangement terminology. We will
work only with arrangements of linear hyperplanes in a finite
dimensional complex vector
space. We do not consider arrangements of affine hyperplanes. We say
that a hyperplane arrangement is \emph{essential} if the common
intersection of the hyperplanes is zero. A \emph{flat} of a hyperplane arrangement is a linear
subspace of the ambient vector space which can be written as the intersection of several
hyperplanes. We consider the ambient vector space to be a flat,
because it arises from the empty intersection of hyperplanes.

\begin{rmk}
Following the standard convention, we equip the collection of flats
 with the partial order given by reverse inclusion, writing $F
\leq G$ if $F$ and $G$ are flats such that $G\subseteq F$. When the arrangement is essential, this
partial order gives the collection of flats the structure of a finite
geometric lattice, or equivalently, a simple matroid. For our
purposes, the partial order will only be used in \cref{PHA-in-HA} and
in the fact that will refer to the flats of an essential hyperplane
arrangement as the ``lattice of flats.''
\end{rmk}

Viewing an essential hyperplane arrangement as its lattice of flats, we make the following generalization:

\begin{dfn} \label{def:partial-hyperplane-arrangement}
  A \textbf{partial hyperplane arrangement} in $V = \C^d$ is a finite 
  collection $\LL$ of vector subspaces of $V$, such that
  \begin{enumerate}[leftmargin=*]
    \item $\{0\} \in \LL$,
    \item if $F,F' \in \LL$ then $F \cap F' \in \LL$,
    \item for each $F \in \LL$, the set $\{F' \in \LL : F' \subseteq F\}$ 
      is the lattice of flats of an essential hyperplane arrangement in 
      the vector space $F$.
  \end{enumerate}
\end{dfn}

\begin{zb}\label{PHA-in-HA}
  Suppose that $\LL$ is the lattice of flats of an essential
  hyperplane arrangement, and $\mathscr{P} \subseteq \LL$ is an order filter (i.e. an upward closed set under the partial order of reverse inclusion.) Then $\mathscr{P}$ is a partial hyperplane arrangement.
\end{zb}

\begin{zb}
Here we give an example of a partial hyperplane arrangement in
$\mathbb{C}^4$. Let $\LL$ consist of the zero subspace of $\C^4$,
together with the affine cones of the points, lines, and
planes in $\P^3$ listed in \cref{eg:partial-hyperplane-arrangement}.
\end{zb}

\begin{figure}[h!]
\begin{minipage}[t]{0.4\textwidth}
\strut\vspace*{-\baselineskip}\newline
\resizebox{\textwidth}{!}{
\begin{tabular}{ |c|c|c|c| } 
\hline
Points & Lines & Planes \\
\hline
  $A = [0:0:1:1]$&$AB$&$ABCD$\\
  $B = [0:1:0:1]$&$AC$&$BCDE$\\
  $C = [0:0:0:1]$&$AD$&\\
  $D = [0:-1:0:1]$&$BE$&\\
  $E = [1:0:0:1]$&$CE$&\\
&$DE$&\\
&$AE$&\\
&$BCD$&\\
\hline
\end{tabular}
}
\end{minipage}\hspace{0.1\textwidth}
\begin{minipage}[t]{0.4\textwidth}
\strut\vspace*{-\baselineskip}\newline
\tdplotsetmaincoords{60}{130}
\resizebox{\textwidth}{!}{
\begin{tikzpicture}[scale=1.8,tdplot_main_coords, bullet/.style={circle,inner
sep=1pt,fill=black,fill opacity=1}]
\coordinate (O) at (0,0,0);
\tdplotsetcoord{P}{.8}{55}{60}
\draw[thick, nodes={opacity=1},-] (0,0,0) node[bullet, label=below:{$C$}]{} -- (0,-1,0) node[bullet, label=above:{$B$}]{};
\draw[thick,nodes={opacity=1},-] (0,0,0) -- (1,0,0) node[bullet, 
  label=below:{$E$}]{}
 (0,0,0) -- (0,1,0) node[bullet, label=above:{$D$}]{}
 (0,0,0) -- (0,0,1) node[bullet, label=left:{$A$}]{}
 (0,0,1) -- (1,0,0)
 (1,0,0) -- (0,-1,0)
 (0,0,1) -- (0,-1,0)
 (1,0,0) -- (0,1,0)
 (0,0,1) -- (0,1,0);

\draw[fill=white,fill opacity=0.1,-] 
(0,-1.5,0) -- (0,-1.5,1.5) -- (0,1.5,1.5) -- (0,1.5,0) -- cycle;

\draw[fill=white,fill opacity=0.1,-] 
(0,-1.5,0) -- (1.5,-1.5,0) -- (1.5,1.5,0) -- (0,1.5,0) -- cycle;

\draw[dashed,nodes={opacity=1},-] (0,0,1) -- (-0.5, 0, 1.5);
\draw[thick,nodes={opacity=1},-] (0,0,1) -- (0,-0.5, 1.5)
 (0,0,1) -- (0,0, 1.5)
 (0,0,1) -- (0,0.5, 1.5);

\draw[dashed,nodes={opacity=1},-] (1,0,0) -- (1.5, 0, -0.5);
\draw[thick,nodes={opacity=1},-] (1,0,0) -- (1.5,-0.5, 0)
(1,0,0) -- (1.5,0, 0)
(1,0,0) -- (1.5,0.5, 0);

\end{tikzpicture}
}
\end{minipage}
\caption{The projectivization of a partial hyperplane arrangement in $\C^4$.}\label{eg:partial-hyperplane-arrangement}
\end{figure}

\begin{zb}
  Here we give an example of a partial hyperplane arrangement which
  cannot be realized as an order filter in the lattice of flats of a
  hyperplane arrangement. Consider the partial hyperplane
  arrangement $\LL$ in $\C^3$ consisting of the proper coordinate
  subspaces, and a general line. Any hyperplane passing through the
  general line will intersect one of the coordinate hyperplanes in a
  non-coordinate line. Therefore if there is hyperplane
  arrangement whose lattice of flats contains $\LL$, then $\LL$
  cannot be upward closed.
\end{zb}

\begin{dfn}\label{morphism-of-arrangements}
  Suppose that $\LL_i$ is a partial hyperplane arrangement in a vector
  group $V_i$ for $i=1,2$, and $T:V_1 \to V_2$ is a linear map. Then we say $T$ is a \textbf{morphism of partial hyperplane arrangements} if
  \begin{enumerate}[leftmargin=*]
    \item \label{flats-to-flats} for each $F_1 \in \LL_1$ there exists $F_2 \in \LL_2$ such that $T(F_1) \subseteq F_2$,
    \item for each $F_1 \in \LL_1$ and $F_2 \in \LL_2$, $T^{-1}(F_2) \cap F_1 \in \LL_1$.
  \end{enumerate}
\end{dfn}

\begin{zb}\label{morphisms-of-hyperplane-arrangements}
  In the case where $\LL_i$ is the lattice of flats of a hyperplane
  arrangement $\A_i$, then $T$ is a morphism of
  partial hyperplane arrangements if and only if the preimage of each
  hyperplane in $\A_2$ is either a hyperplane in $\A_1$ or is $V_1$.
\end{zb}

The following is our main result, in maximal generality.
\begin{letteredthm}\label{main-thm-noncompact}
  There is a fully faithful embedding of categories from partial 
  hyperplane arrangements to equivariant partial compactifications of 
  vector groups, such that the following are equivalent for an 
  equivariant partial compactification $Y$.
  \begin{enumerate}[leftmargin=*]
    \item $Y$ arises from a partial hyperplane arrangement.
    \item \label{main-thm-noncompact-slices} $Y$ is normal and satisfies \FO{} and \SL{}.
    \item \label{main-thm-noncompact-1psg} $Y$ is normal and satisfies \FO{} and \OP{}.
    \end{enumerate}
\end{letteredthm}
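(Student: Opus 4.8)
The plan is to construct the embedding of categories explicitly, read off $(1)\Rightarrow(2)$ and $(1)\Rightarrow(3)$ from the construction, and then close the equivalence through $(3)\Rightarrow(2)\Rightarrow(1)$, reducing every local question to the already-proven compact case \cref{main-thm-compact} by exhibiting each Zariski slice as a compact matroid Schubert variety.

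To build the functor I would assign to a partial hyperplane arrangement $\LL$ in $V=\C^d$ a variety $Y_\LL$ whose $V$-orbits are indexed by the flats $F\in\LL$, with $O_F\cong V/F$, stabilizer exactly $F$, so that $\dim O_F=\codim_V F$ and the orbit-closure order reads $O_{F'}\subseteq\overline{O_F}$ iff $F\subseteq F'$. Condition (iii) of \cref{def:partial-hyperplane-arrangement} says that $\LL_{\le F}:=\{F'\in\LL:F'\subseteq F\}$ is the lattice of flats of an essential arrangement $\A_F$ in $F$; let $M_F$ be its matroid Schubert variety, a compact variety of dimension $\dim F$. I take $M_F$ to be the slice through $O_F$ and form the chart $U_F:=V\times_F M_F$, an equivariant partial compactification whose orbits are exactly the $O_{F'}$ with $F'\subseteq F$. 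The overlaps are then forced by the poset: $U_F$ and $U_{F'}$ share precisely the orbits indexed by flats contained in $F\cap F'$, and $F\cap F'\in\LL$ by condition (ii), so I glue $U_F$ to $U_{F'}$ along the common chart $U_{F\cap F'}$. The cocycle condition is automatic from the lattice of intersections, so the points needing verification are separatedness of the glued space and the fact that $Y_\LL$ is the full matroid Schubert variety $M_\LL$ exactly when $V\in\LL$, which is the compact case of \cref{main-thm-compact}.

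A morphism of partial hyperplane arrangements $T:V_1\to V_2$ induces on orbits the maps $V_1/F_1\to V_2/F_2$ coming from $T$ via $T(F_1)\subseteq F_2$ (the first axiom of \cref{morphism-of-arrangements}), and the preimage axiom (ii) makes these compatible with the charts, so they assemble into a morphism $Y_{\LL_1}\to Y_{\LL_2}$ restricting to $T$ on the open orbit. Faithfulness is then immediate, since the functor records $T$ on open orbits. Fullness reduces to showing that any morphism of equivariant partial compactifications — any linear $T$ on open orbits that extends to a map of varieties — automatically satisfies the two axioms of \cref{morphism-of-arrangements}, which I would extract by tracking the extension's effect on orbit closures and stabilizers. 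With the functor in place, $(1)\Rightarrow(2)$ and $(1)\Rightarrow(3)$ are by construction: normality and \FO{} are local and finite on the charts, the $M_F$ furnish Zariski slices through every orbit (and hence, by translation, through every point), giving \SL{}, and \OP{} follows by applying the compact case to each slice.

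The substantive content is the cycle, of which $(1)\Rightarrow(3)$ is the construction above. For $(3)\Rightarrow(2)$, given $Y$ normal with \FO{} and \OP{}, I first note that stabilizers are linear subspaces, since closed subgroups of $\Ga^d$ are subspaces in characteristic zero, and then build a Zariski slice through each orbit. This slice construction is the step I expect to be hardest: no reductive or Luna slice theorem is available for the non-reductive group $V$, so I would use \OP{} to reach each orbit $O_F$ by a one-parameter limit and exploit normality to promote the transverse structure to a genuine product neighborhood $G\times_{G_x}Z_x$, arguing by induction on orbit dimension. For $(2)\Rightarrow(1)$, the slice $Z_x$ through $x\in O_F$ is an equivariant compactification of $F=G_x$: it is proper because $x$ is a $G_x$-fixed point, which forces the deepest orbit of the slice to be the single point $x$ and hence $\A_F$ to be essential, so \cref{main-thm-compact} identifies $Z_x$ with the matroid Schubert variety of an essential arrangement $\A_F$ in $F$. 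Taking $\LL:=\{\Stab x:x\in Y\}$ (which contains $\{0\}$ since $V$ acts freely on its open orbit), axioms (ii) and (iii) of \cref{def:partial-hyperplane-arrangement} follow from the compatibility of these slices, and comparing charts yields $Y\cong Y_\LL$. The one genuine obstacle is therefore the hand-built algebraic slice theorem for vector-group actions, which is precisely where normality and the one-parameter limits are indispensable and where the reduction to the compact case \cref{main-thm-compact} does the essential work.
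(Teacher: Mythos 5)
Your overall architecture matches the paper's: glue the charts $V *_F Y_{\A_F}$ along the intersections forced by the lattice, reduce everything local to the compact case \cref{main-thm-compact} via slices, and close the cycle through the slice condition. But the two technical hearts of the argument are left as acknowledged placeholders, and one of your stated justifications is actually invalid. First, the implication \OP{} $\Rightarrow$ \SL{} (your ``hand-built algebraic slice theorem''): saying you will ``exploit normality to promote the transverse structure to a genuine product neighborhood, arguing by induction on orbit dimension'' is not an argument, and induction on orbit dimension is not how it goes. The missing idea is to build a $V$-equivariant retraction $\tau: U_x \to V/V_x$ extending the quotient map, by showing that every linear form $f \in V^\vee$ vanishing on $V_x$ extends regularly to $U_x$; by normality this reduces to checking $f$ has no pole along any codimension-one orbit $V\cdot y$, which is exactly where \OP{} enters: the boundary point $y$ of a line $L$ satisfies $L \subseteq V_y \subseteq V_x$, so $f$ vanishes on $L$, and examining $f^{-1}$ on a generic translate of $L$ rules out a pole. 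The slice is then $\tau^{-1}(0)$, and Zariski's main theorem (birational plus injective on closed points into a normal target) upgrades $V *_{V_x} Z_x \to X$ to an open embedding. Without some version of this extension-of-functionals step your induction has nothing to induct on.

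Second, your claim that the minimal slice $Z_x$ ``is proper because $x$ is a $G_x$-fixed point'' is a non sequitur: $\mathbb{A}^1$ with a fixed point is not proper. Properness is where the vector-group structure is genuinely used, and the paper's argument is topological: the $V_x$-orbits of $Z_x$ form an algebraic cell decomposition (finitely many cells, each an affine space) with a zero-dimensional cell, and $\dim H^{2i}_c$ counts $i$-cells, so $H^0_c \neq 0$ forces $Z_x$ to be proper (and the $0$-cell to be unique). This properness is not a cosmetic point — it is what feeds the Borel fixed point theorem in the proof that $\LL(X)$ is closed under intersections (take $\overline{V_x \cap V_y}$ inside the proper slice $Z_x$ and extract a fixed point), a step your phrase ``axioms (ii) and (iii) follow from the compatibility of these slices'' elides. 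Closure under intersection of the global stabilizer collection does not follow formally from applying \cref{main-thm-compact} slice by slice; it needs its own argument. The remaining pieces of your proposal — the gluing, the cocycle condition, separatedness via the canonical fibration to $V/F$, and full faithfulness by extending $T$ chart by chart using the compact case — are correct and coincide with the paper's route.
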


Given an equivariant partial compactification $Y$ of $V$ arising from a 
partial hyperplane arrangement $\LL$, then $\LL$ can be recovered as the 
collection of all stabilizers of points in $Y$. See 
\cref{prop:stabs-are-flats}.

\subsection{Analogy with toric varieties}\label{toric-analogy-section}

For the remainder of the introduction, we say that an equivariant
partial compactification of $V$ satisfying any of the equivalent conditions of \cref{main-thm-noncompact} is a \emph{linear $V$-variety}.
In this section, we explain the analogy between \cref{main-thm-noncompact} and the correspondence between normal toric varieties and polyhedral fans. From now on, we assume all toric varieties are normal.

Toric varieties satisfies \FO{}, \SL{}, and \OP{}, and once we impose
these conditions onto an equivariant partial
compactification of a vector group, there is dictionary
(\cref{toric-analogy-table})
which is similar to the dictionary between toric varieties and
fans. In both cases the idea is to cover the variety with
``simple'' open sets. The main difference is that these
open sets are affine in the torus case and non-affine in the vector
group case.

\begin{figure}
\begin{tabular}{L{0.05\linewidth}|L{0.19\linewidth} L{0.19\linewidth}|L{0.19\linewidth} L{0.19\linewidth}} 

 & \multicolumn{2}{c}{Toric varieties} & \multicolumn{2}{c}{Linear $V$-varieties}\\
\hline
Sec. & Combinatorics & Geometry & Combinatorics & Geometry \\
\hline
\ref{intro-analogy-1psg} & $N$ & $T$ & $V$ & $V$ \\

\hline
\ref{intro-analogy-full-dim-cones} & Full dimensional cones in $N_\R$ & 
Affine toric varieties with no torus factors & Essential hyperplane 
arrangements in $V$ & Matroid Schubert varieties \\

\hline
\ref{intro-analogy-cones} & Cones in $N_\R$ & Affine toric varieties &
Essential hyperplane arrangements in $F \subseteq V$ &
Simple linear $V$-varieties \\

\hline
\ref{intro-analogy-fans} & Fans in $N_\R$ & Toric varieties & Partial hyperplane arrangements in $V$ & Linear $V$-varieties \\

\hline
\ref{intro-analogy-orbit-cone} & Cones in a fan $\Sigma$ & Orbits, distinguished points, and invariant affine opens in $X_\Sigma$ & Flats in a partial hyperplane arrangement $\LL$ & Orbits, distinguished points, and invariant simple opens in
$Y_\LL$ \\
\hline
\end{tabular}
\caption{Correspondences for toric varieties versus correspondences for 
  linear $V$-varieties.}
\label{toric-analogy-table}
\end{figure}

\subsubsection{One-parameter subgroups}\label{intro-analogy-1psg} Let $T$ be an algebraic torus. A one-parameter subgroup of $T$ is an algebraic group homomorphism from $\C^\times$ to $T$. The one-parameter subgroups of $T$ form a finitely generated free abelian group $N$, and write $N_\R = N \otimes_\Z \R$ for the corresponding real vector space. Let $V$ be a vector group. By a one-parameter subgroup of $V$, we mean an algebraic group homomorphism from $\C$ to $V$. The one-parameter subgroups of $V$ naturally correspond to the elements of $V$, so $V$ will play the role of both $T$ and $N$.

\subsubsection{Full dimensional cones and essential hyperplane 
arrangements}\label{intro-analogy-full-dim-cones}

The toric varieties arising from full dimensional cones are 
exactly the affine toric varieties that have no torus factors.
If $\sigma \subseteq N_\R$ is full dimensional (strictly convex rational polyhedral) cone, then there is a 
canonical embedding of tori $T \subseteq 
\prod_{u \in \mathscr{H}} \C^\times$ where $\mathscr{H}$ is the unique minimal basis of the dual semigroup. Note that this embedding is only canonical when $\sigma$ is full dimensional. The 
corresponding toric variety is the closure of $T$ in 
$\prod_{u \in \mathscr{H}}(\C^\times \cup \{0\})$.
If $\A$ is an essential hyperplane arrangement in $V$, then there is a 
canonical embedding of vector groups $V \subseteq \prod_{H \in \A} V/H$. The 
corresponding matroid Schubert variety is the closure of $V$ in $\prod_{H \in 
\A} (V/H \cup \{\infty\})$.

\subsubsection{Simple partial 
compactifications}\label{intro-analogy-cones} Suppose an algebraic group $G$ acts on a 
variety $X$ with finitely many orbits. We say that $X$ is \emph{simple} if 
there is a unique closed orbit. Since the orbits form a finite stratification, $X$ can be covered with 
simple $G$-stable open sets.
Simple toric varieties are exactly affine toric varieties by
\cite[Corrollary 2]{S74}, and every affine toric variety arises from a sublattice $N' \subseteq N$ and a full dimensional cone $\sigma \subseteq N' \otimes_\Z \R$.
Simple linear $V$-varieties are
not affine, however by \cref{prop:proper-slice} and \cref{main-thm-compact}, every simple linear $V$-variety arises from a vector subspace $F \subseteq V$ and
an essential hyperplane arrangement in $F$. 

\subsubsection{Partial compactifications}\label{intro-analogy-fans}
Toric varieties are constructed from affine toric varieties by
gluing according to the fan, and likewise linear $V$-varieties are
constructed from simple linear $V$-varieties by gluing according to
the partial hyperplane arrangement. See \cref{construction-comparison}
for more details.

\subsubsection{Orbit correspondences}\label{intro-analogy-orbit-cone}
Let $\Sigma$ be a fan in $N_\R$ corresponding to a toric variety 
$X_\Sigma$. Let $\sigma^\circ$ denote the relative interior of a cone 
$\sigma \in \Sigma$. That is, $\sigma^\circ = \sigma \setminus 
\bigcup \tau$ where the union is over $\tau \in \Sigma$ such that $\tau 
\subsetneq \sigma$. The cones $\sigma \in \Sigma$ are in one-to-one 
correspondence with distinguished points $x_\sigma \in X_\Sigma$, given 
by 
\[
N \cap \sigma^\circ = \{\lambda \in N : \lim_{t \to 0}\lambda(t) = x_\sigma\}.
\]
Let $\LL$ be a partial hyperplane arrangement corresponding to a linear 
$V$-variety $Y_\LL$. Given $F \in \LL$, write $F^\circ = F \setminus 
\bigcup F'$ where the union is over $F' \in \LL$ such that $F' \subsetneq 
F$. The flats $F \in \LL$ are in one-to-one correspondence with 
distinguished points $y_F \in Y_\LL$, given by
\[
F^\circ = \{v \in V : \lim_{t \to \infty}tv = y_F\}.
\]
In both cases, each orbit contains exactly one distinguished point. 
Therefore cones (resp. flats) also correspond to orbits, and to simple 
invariant open sets in $X_\Sigma$ (resp. $Y_\LL$). See
\cref{sec:orbit-flat-correspondence} for more details.

\subsection{Structure of the paper}
In \cref{sec:slice-thm} we prove some consequences of \FO{} and \SL{}
which will be used throughout the paper, and we prove the equivilance of\cref{main-thm-noncompact-slices} and \cref{main-thm-noncompact-1psg} in \cref{main-thm-noncompact}. In
\cref{sec:orbit-flat-correspondence} we prove the analog of
the orbit-cone correspondence for linear
$V$-varieties. In section 
\cref{sec:structure-of-linear-compactifications} we prove
\cref{main-thm-compact}, and in
\cref{sec:structure-of-linear-V-varieties} we prove 
\cref{main-thm-noncompact}. The appendix describes matroid Schubert 
varieties in coordinates.

\subsection{Acknowledgments}
The author would like to thank his advisor Botong Wang for his advice and encouragement throughout this project, as well as Connor Simpson for reading a previous draft and providing useful feedback.

\section{Slices} \label{sec:slice-thm}

\subsection{Slices of group actions} \label{sec:slice-def}
We begin by reviewing the definition of homogeneous fiber spaces, following \cite[Chapter II.4.8]{S94}.
Suppose that $G$ is an algebraic group, $H$ is an algebraic subgroup,
and $Z$ is a quasiprojective variety on which $H$ acts. Then there
exists a variety $G *_H Z$ called the \emph{homogeneous fiber space},
which parameterizes equivalence classes in $G \times Z$, where
\[
(gh,z) \sim (g,hz),\quad \text{for all $g \in G, h \in H, z \in Z$}.
\]
There is a canonical map $G \times Z \to G *_H Z$ sending a point $(g,z)$ to its equivalence class, which we write as $[g,z]$.
The universal property which characterizes $G *_H Z$ is the following: if
$
\pi:G \times Z \to X
$
is a map of varieties such that $\pi(gh, z) = \pi(g, hz)$ for all $g \in G, h \in H, z \in Z$, then there is a unique factorization as follows.
\begin{center}
\begin{tikzcd}
G \times Z \arrow[rr, "\pi"] \arrow[rd] &                    & X \\
                                        & G *_H Z \arrow[ru] &  
\end{tikzcd}
\end{center}
From the universal property, we see that there is a canonical map
\[
\tau:G *_H Z \to G/H,\quad [g,z] \mapsto gH,
\]
with each fiber isomorphic to $Z$. We call $\tau$ the 
\emph{canonical fibration}. If $H \subseteq G$ is
normal and has a splitting $s:G \to H$, then there is a $G$-equivariant
isomorphism
\[
G *_H Z \cong G/H \times Z,\quad [g,z] \mapsto (gH, s(g) \cdot z),
\]
which makes the following diagram commute, where $\text{pr}_1$ is the projection.

\begin{center}
\begin{tikzcd}
G *_H Z \arrow[r, "\cong"] \arrow[rd, "\tau"] & G/H \times Z \arrow[d, "\text{pr}_1"] \\
                                              & G/H                                  
\end{tikzcd}
\end{center}

\begin{rmk}\label{splitting-product}
  For the remainder of the paper we will take $G$ to be
  commutative. Therefore it is possible for us to avoid defining
  $G *_{H} Z$ by choosing splittings and working with $G/H
  \times Z$ instead. While $G/H \times Z$ is a simpler
  construction, we have found that thinking in terms of the more
  canonical construction $G *_H Z$ shortens and clarifies the
  rest of the paper enough to make it worthwhile.
\end{rmk}

We need the following lemmas, which are formal consequence of the definitions.

\begin{lem}[Associativity of $*$]\label{associativity}
  Suppose that $H' \subseteq H \subseteq G$ are closed subgroups and
  $H'$ acts on a variety $Z'$. Then there is a natural isomorphism
  \[
    G *_{H} (H *_{H'} Z') \cong G *_{H'} Z',\quad [g,[h,z]] \mapsto 
    [gh,z].
  \]
\end{lem}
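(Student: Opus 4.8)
The plan is to construct the isomorphism and its inverse directly from the universal property of the homogeneous fiber space, and then verify that the two maps compose to the identity on representatives. Throughout I write $W = H *_{H'} Z'$, on which $H$ acts by $h_1 \cdot [h,z] = [h_1 h, z]$; this is the action that makes $G *_H W$ meaningful. Both candidate maps will be produced by factoring an explicit balanced map through the appropriate quotient, so the only real content is checking the balancing relations.

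For the forward map $\Phi \colon G *_H W \to G *_{H'} Z'$, I start with the map $G \times H \times Z' \to G *_{H'} Z'$ given by $(g,h,z) \mapsto [gh,z]$. First I check it is $H'$-balanced in the last two factors: for $h' \in H'$ the two representatives $(hh',z)$ and $(h,h'z)$ of a single point of $W$ map to $[ghh',z]$ and $[gh,h'z]$, which coincide by the defining relation of $G *_{H'} Z'$. Hence the map descends to $G \times W \to G *_{H'} Z'$, $(g,[h,z]) \mapsto [gh,z]$. Next I check this descended map is $H$-balanced: for $h_1 \in H$ the image of $(gh_1, [h,z])$ is $[gh_1 h, z]$, while the image of $(g, h_1\cdot[h,z])$, i.e. of $(g,[h_1 h, z])$, is also $[gh_1 h, z]$. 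By the universal property this factors uniquely through $G *_H W$, yielding $\Phi([g,[h,z]]) = [gh,z]$.

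For the inverse $\Psi \colon G *_{H'} Z' \to G *_H W$, I start with $G \times Z' \to G *_H W$, $(g,z) \mapsto [g,[e,z]]$, and check it is $H'$-balanced: for $h' \in H'$, the image of $(g, h'z)$ is $[g, [e,h'z]] = [g,[h',z]]$ using the $H'$-relation inside $W$, and this equals $[gh', [e,z]]$ by the $H$-relation in $G *_H W$, since $h' \in H' \subseteq H$; this is exactly the image of $(gh', z)$. So the map factors through $G *_{H'} Z'$ to give $\Psi([g,z]) = [g,[e,z]]$. Composing, $\Phi\Psi([g,z]) = [g,z]$ is immediate, and $\Psi\Phi([g,[h,z]]) = [gh,[e,z]] = [g, h\cdot[e,z]] = [g,[h,z]]$, again by the $H$-relation; hence $\Phi$ and $\Psi$ are mutually inverse.

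The steps above are purely formal once one trusts that the universal property produces morphisms of varieties, so the point that requires genuine care is the same one underlying the whole construction: the homogeneous fiber space is only defined for an action on a quasiprojective variety, so I must first confirm that $W = H *_{H'} Z'$ is again quasiprojective in order for $G *_H W$ to exist. This follows from the realization of $G *_H Z$ as a geometric quotient by a free action together with the standard fact (Serre's criterion, or the references cited above) that such quotients of quasiprojective varieties are quasiprojective. Finally, naturality is the statement that the isomorphism is compatible with $H'$-equivariant morphisms $Z' \to \tilde{Z}'$, and also with the evident maps down to $G/H'$ (both $[g,[h,z]] \mapsto ghH'$ and $[g,z]\mapsto gH'$); this is checked by the same representative-level computation, since all the maps involved are defined by the identical formulas on representatives and the universal property guarantees uniqueness of the induced morphisms.
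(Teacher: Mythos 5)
Your proof is correct and is exactly the formal verification the paper has in mind: the paper states this lemma without proof, remarking only that it is a ``formal consequence of the definitions,'' and your argument---descending the representative-level formulas through the universal property and checking the balancing relations, plus the quasiprojectivity of $H *_{H'} Z'$ needed for $G *_H (H *_{H'} Z')$ to be defined---is precisely that formal consequence spelled out.
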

\begin{lem}[Orbits and stabilizers of $G *_H Z$] 
  \label{orbits-and-stabs-of-GHZ} \leavevmode
  \begin{enumerate}[leftmargin=*]
    \item There is a one-to-one correspondence between $G$-orbits in $G 
      *_{H} Z$ and $H$-orbits in $Z$ which sends $G \cdot [v,z]$ to $H \cdot z.$
    \item Suppose that $G$ is commutative, and $x = [v,z] \in G *_H Z$. Then 
      the stabilizers $G_x$ and $H_z$ coincide as subgroups of $G$.
  \end{enumerate}
\end{lem}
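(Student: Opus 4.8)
The plan is to argue everything directly from the construction, since both statements are formal. The first thing I would do is replace the generating relation $(gh,z) \sim (g,hz)$ by a closed description of the resulting equivalence: I claim that $(v,z) \sim (v',z')$ precisely when there exists $h \in H$ with $v = v'h$ and $z' = hz$. This relation is already reflexive, symmetric, and transitive (transitivity uses that $H$ is a group, composing the two elements $h$), so it must coincide with the equivalence relation generated by the displayed one. I would also fix the $G$-action as $g' \cdot [v,z] = [g'v,z]$ and check it is well defined on classes, which is immediate from the closed description and makes the canonical fibration $\tau \colon G *_H Z \to G/H$ equivariant.

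For (i), define $\Phi(G \cdot [v,z]) = H \cdot z$. Well-definedness has two parts: if $[v,z] = [v',z']$ then $z' = hz$ for some $h \in H$, so $H \cdot z = H \cdot z'$; and every point $[g'v,z]$ in the $G$-orbit of $[v,z]$ has the same second coordinate $z$, so the whole orbit maps to $H \cdot z$. Surjectivity is witnessed by $[e,z]$ for each $z \in Z$, where $e$ is the identity of $G$. For injectivity, suppose $H \cdot z = H \cdot z'$, say $z' = hz$; then $[v',z'] = [v',hz] = [v'h,z] = (v'h v^{-1}) \cdot [v,z]$ lies in $G \cdot [v,z]$, so the two $G$-orbits agree. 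Conceptually this just records that $G$ acts transitively on the base of $\tau$ while $H$ acts on the fiber $Z$ in the original way, so $G$-orbits are determined by $H$-orbits within a single fiber; but the direct computation is the shortest route.

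For (ii), I would unwind the stabilizer condition: $g \in G_x$ iff $[gv,z] = [v,z]$, iff there is $h \in H$ with $gv = vh$ and $z = hz$. This is exactly where commutativity enters: in an abelian $G$ the equation $gv = vh$ forces $h = g$, so the condition collapses to $g \in H$ together with $gz = z$, i.e.\ $g \in H_z$; the converse inclusion (take $h = g$) is equally direct. I expect no serious obstacle here, as these are bookkeeping arguments — the only genuinely delicate points are getting the closed form of $\sim$ right, so that the action and the orbit map are visibly well defined, and isolating the single step in (ii) where commutativity is used. Without that hypothesis the same computation yields only that $G_x$ is a conjugate of $H_z$ rather than equal to it, which is why the commutativity assumption appears explicitly in the statement.
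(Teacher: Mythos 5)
Your proof is correct, and it takes the only natural route: the paper itself gives no argument for this lemma, stating only that it is a ``formal consequence of the definitions,'' and your direct verification --- closing up the generating relation to $(v,z)\sim(v',z')$ iff $v=v'h$, $z'=hz$ for some $h\in H$, then reading off orbits and stabilizers --- is exactly the computation being alluded to. Your closing observation that without commutativity one only gets $G_x = vH_zv^{-1}$ correctly explains why the hypothesis appears in part (ii).
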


\begin{dfn}\label{slice-dfn}
Suppose that $X$ is an algebraic variety with a $G$-action. If $x \in
X$ is a point with stabilizer $G_x$, we say that a $G_x$-stable
locally closed subvariety $Z_x \subseteq X$ containing $x$ is a \textbf{(Zariski) slice} at $x$ if the natural map
  \[
  G*_{G_x}Z_x \to X,\quad [g,z] \mapsto g \cdot z
  \]
  is a $G$-equivariant Zariski open embedding.
\end{dfn}

The point $x$ is in the image of $G *_{G_x} Z_x$, so we have
that $G *_{G_x} Z_x$ is identified with a $G$-stable neighborhood of
the orbit $G \cdot x$.

We will often use the following criterion for open embeddings to prove the 
existence of slices.

\begin{thm}[Zariski's main theorem]\label{thm:open-criterion}
  Suppose that $\pi:X \to Y$ is a morphism of varieties which is
  birational and injective on closed point, and that $Y$ is
  normal. Then $\pi$ is an open embedding.
\end{thm}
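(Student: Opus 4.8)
The plan is to deduce the statement from the scheme-theoretic (Grothendieck) form of Zariski's main theorem, which factors any separated quasi-finite morphism of finite type over a Noetherian base as an open immersion followed by a finite morphism. The hypothesis that $Y$ is normal will be used exactly once, at the very end, to upgrade a finite birational morphism to an isomorphism.

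First I would verify that $\pi$ is quasi-finite. Since $\pi$ is birational there are dense opens $U \subseteq X$ and $W \subseteq Y$ with $\pi|_U : U \xrightarrow{\sim} W$, so the generic fibre is a single point. For a closed point $y \in Y$, the fibre $\pi^{-1}(y)$ is a finite-type $\C$-scheme all of whose closed points lie over $y$; injectivity on closed points forces $\pi^{-1}(y)$ to contain at most one closed point, and a finite-type scheme over $\C$ whose closed points are dense can have at most one closed point only if it is empty or a single point. Hence every fibre is finite and $\pi$ is quasi-finite. As varieties are separated and of finite type and $Y$ is Noetherian, Zariski's main theorem in its scheme-theoretic form applies and yields a factorization $X \xrightarrow{j} Z \xrightarrow{f} Y$ with $j$ an open immersion and $f$ finite.

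Next I would replace $Z$ by an integral scheme. Because $X$ is irreducible and $j$ is an open immersion, $j(X)$ is irreducible and open in $Z$; let $\overline{X}$ be its closure with the reduced structure. Then $\overline{X}$ is an integral variety, $j$ factors through an open immersion $X \hookrightarrow \overline{X}$, and the restriction $f\colon \overline{X} \to Y$ is finite (a closed immersion followed by a finite map), dominant, hence surjective, and birational (since $\overline{X}$ contains the copy of $X$, which is birational to $Y$). This reduces the problem to showing that a finite birational morphism $f\colon \overline{X} \to Y$ onto a normal variety is an isomorphism. For this, work over an affine open $\Spec A \subseteq Y$ and write $f^{-1}(\Spec A) = \Spec B$ with $B$ a finite $A$-algebra. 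As $f$ is birational and $\overline{X}$ is integral, $B$ sits inside the common function field $\C(Y)$, which is the fraction field of $A$; finiteness makes $B$ integral over $A$; and normality of $Y$ means $A$ is integrally closed in its fraction field, forcing $B = A$. Thus $f$ is an isomorphism, and since $\pi = f \circ j$ with $f$ an isomorphism and $j$ an open immersion, $\pi$ is an open embedding.

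The two places requiring care are the reduction to quasi-finiteness and the final integral-closure step. The former is mostly formal once one observes that a finite-type $\C$-scheme with a unique closed point must be a single point. The genuine crux is the last step, and it is precisely there that normality of $Y$ cannot be dropped: the normalization of a cuspidal cubic is finite, birational, and bijective on closed points, yet is not an open embedding, and the only failing hypothesis is the normality of the target.
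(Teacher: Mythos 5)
Your proof is correct. Note that the paper does not actually prove this statement: it cites \cite[Exercise 29.6.D]{V17} and the surrounding discussion, and the argument you give --- factor the quasi-finite separated morphism as an open immersion followed by a finite morphism via Grothendieck's form of Zariski's main theorem, then use normality to kill the finite birational part by integral closure --- is exactly the standard solution to that exercise, so you have in effect supplied the omitted details rather than taken a different route. The only point you compress is the passage from ``fibres over closed points are finite'' to ``all fibres are finite'': this is fine for varieties (the non-quasi-finite locus is closed, hence if nonempty contains a closed point of $X$, whose image is a closed point of $Y$), but it deserves the one extra sentence. Your closing remark about the cuspidal cubic correctly isolates where normality is used.
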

For the above formulation, we refer to \cite[Exercise 29.6.D]{V17} and the surrounding discussion.
For our purposes, checking injectivity on closed points can be rephrased as follows.

\begin{lem}\label{injectivity-criterion}
  Suppose that $x \in X$ and $Z_x \subseteq X$ is a $G_x$-stable subvariety 
  containing $x$. Then $G *_{G_x} Z_x \to X$ is injective on closed 
  points if
  $g_1 \cdot z_1 = g_2 \cdot z_2$ implies $g_2^{-1}g_1 \in G_x$
  for all $g_1,g_2 \in G$ and $z_1,z_2 \in Z_x$.
\end{lem}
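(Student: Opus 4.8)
The plan is to unwind the definition of $G *_{G_x} Z_x$ as a quotient of $G \times Z_x$ and reduce the injectivity to a direct manipulation of the equivalence relation, feeding in the stated hypothesis at exactly one point. Since every variety here is over $\C$, closed points are $\C$-points, so it suffices to check the underlying set-theoretic map on $\C$-points. Concretely, I would begin with two closed points of $G *_{G_x} Z_x$, written as equivalence classes $[g_1, z_1]$ and $[g_2, z_2]$ with $g_i \in G$ and $z_i \in Z_x$, and assume they have the same image, i.e. $g_1 \cdot z_1 = g_2 \cdot z_2$. The goal is then to show $[g_1, z_1] = [g_2, z_2]$.

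The hypothesis applies immediately to the equation $g_1 \cdot z_1 = g_2 \cdot z_2$ and yields $h = g_2^{-1} g_1 \in G_x$. Writing $g_1 = g_2 h$, the defining relation $(gh, z) \sim (g, hz)$ of the homogeneous fiber space gives
\[
[g_1, z_1] = [g_2 h, z_1] = [g_2, h \cdot z_1],
\]
where the final expression makes sense because $Z_x$ is $G_x$-stable and $h \in G_x$, so $h \cdot z_1 \in Z_x$. It then remains to identify $h \cdot z_1$ with $z_2$. Substituting $g_1 = g_2 h$ into $g_1 \cdot z_1 = g_2 \cdot z_2$ gives $g_2 \cdot (h \cdot z_1) = g_2 \cdot z_2$, and applying $g_2^{-1}$ yields $h \cdot z_1 = z_2$. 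Hence $[g_1, z_1] = [g_2, z_2]$, which is exactly the asserted injectivity on closed points.

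This argument is essentially formal, so I do not expect a serious obstacle; the content is purely bookkeeping with the equivalence relation, combined with the single invocation of the hypothesis. The one place that requires care — and the reason the $G_x$-stability of $Z_x$ is a genuine hypothesis rather than an afterthought — is ensuring that the intermediate element $h \cdot z_1$ lands back in $Z_x$, so that $[g_2, h \cdot z_1]$ is a legitimate point of $G *_{G_x} Z_x$ and the comparison with $[g_2, z_2]$ takes place there. I would also note at the outset that the map $[g,z] \mapsto g \cdot z$ is well defined by the universal property, since $(g,z) \mapsto g \cdot z$ factors through the quotient because $(gh) \cdot z = g \cdot (h \cdot z)$, although this well-definedness is already built into \cref{slice-dfn}.
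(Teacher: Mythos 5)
Your proof is correct and is exactly the formal unwinding of the equivalence relation that the paper leaves implicit (the lemma is stated there without proof, as an immediate consequence of the definition of $G *_{G_x} Z_x$). The two points you flag — that $h\cdot z_1$ lies in $Z_x$ by $G_x$-stability, and that $h\cdot z_1 = z_2$ follows by cancelling $g_2$ — are precisely the content, so nothing is missing.
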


\subsection{Partial compactifications with slices}\label{compactifications-with-slices}
For this subsection let $G$ be a commutative linear algebraic group,
and $X$ a normal equivariant partial compactification of $G$ such that
\FO{} and \SL{} hold. We will first collect some basic consequences.

\begin{lem}\label{slice-coset}
  If $x \in X$ has a slice $Z_x$, then $Z_x \cap G$ is a coset of $G_x$.
\end{lem}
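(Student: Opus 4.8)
The plan is to show three things about $Z_x \cap G$: that it is nonempty, that it is stable under $G_x$, and that any two of its points differ by an element of $G_x$. Together these will force $Z_x \cap G$ to be a single orbit $G_x \cdot z_0$, which I will then identify with a group coset.

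First I would produce a point of $Z_x \cap G$. Write $U := \im(G *_{G_x} Z_x) \subseteq X$, which is open by \cref{slice-dfn} and nonempty since it contains $x$. Because $X$ is irreducible and $G$ is a dense open orbit, every nonempty open set meets $G$, so I may pick $p \in U \cap G$ and write $p = g \cdot z$ with $g \in G$ and $z \in Z_x$. Since the action on $X$ extends the group law, $G \subseteq X$ is a single $G$-orbit and hence $G$-stable, so $z = g^{-1} \cdot p \in G$. Thus $z \in Z_x \cap G$, proving this set is nonempty; it is moreover $G_x$-stable, being the intersection of the $G_x$-stable sets $Z_x$ and $G$.

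Next I would show $Z_x \cap G$ is a single $G_x$-orbit. Let $z_0, z_1 \in Z_x \cap G$. Both lie in the dense orbit $G$, on which $G$ acts simply transitively by the group law, so $z_0 = g \cdot z_1$ with $g = z_0 z_1^{-1} \in G$. This is an equality $g \cdot z_1 = e \cdot z_0$ of points of $X$ coming from the slice map. Since that map is injective on closed points, \cref{injectivity-criterion} gives $e^{-1} g = g \in G_x$, whence $z_0 \in G_x \cdot z_1$. So all points of $Z_x \cap G$ lie in one $G_x$-orbit; conversely $G_x \cdot z_1 \subseteq Z_x \cap G$ by $G_x$-stability of $Z_x$ and $G$. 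Therefore $Z_x \cap G = G_x \cdot z_0$.

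Finally, because $z_0 \in G$ and the action of any $h \in G_x \subseteq G$ on $z_0$ is the group product $h z_0$, I obtain $G_x \cdot z_0 = \{h z_0 : h \in G_x\} = z_0 G_x$, which is a coset of $G_x$ (left and right cosets agreeing since $G$ is commutative). This is an easy lemma, so I do not anticipate a serious obstacle; the only points needing care are the nonemptiness of $Z_x \cap G$, where irreducibility and density of $G$ enter, and the last identification, which relies on the action restricted to the dense orbit $G$ being the group law rather than something more general.
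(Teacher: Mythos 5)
Your proof is correct, but the route to the coset conclusion differs from the paper's. Both arguments establish nonemptiness of $Z_x \cap G$ the same way (the image of $G *_{G_x} Z_x$ is a nonempty open set, hence meets the dense orbit $G$, and a point $g\cdot z \in G$ forces $z \in G$). For the main step, the paper argues by dimension and irreducibility: since $G *_{G_x} Z_x \cong G/G_x \times Z_x$ embeds openly in $X$, the slice $Z_x$ is irreducible of dimension $\dim G_x$, and the only irreducible $G_x$-invariant closed subsets of $G$ of that dimension are cosets. You instead exploit injectivity of the slice map on closed points: two points $z_0, z_1 \in Z_x \cap G$ satisfy $g\cdot z_1 = e \cdot z_0$ with $g = z_0 z_1^{-1}$, so $[g,z_1] = [e,z_0]$ in $G *_{G_x} Z_x$ and hence $g \in G_x$. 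Your version is more elementary in that it avoids both the dimension count and the (true but unproved) classification of irreducible $G_x$-invariant closed subsets of $G$; the paper's version avoids unwinding the equivalence relation defining the homogeneous fiber space. One small imprecision: \cref{injectivity-criterion} is stated as a sufficient condition \emph{for} injectivity, not as a description of what injectivity yields; the implication you actually need ($[g_1,z_1]=[g_2,z_2]$ forces $g_2^{-1}g_1 \in G_x$) follows directly from the definition of the equivalence relation $(gh,z)\sim(g,hz)$, so you should cite that rather than the lemma. This does not affect correctness.
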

\begin{proof}
  We have that $G$ is contained in any
  invariant open neighborhood of $X$, and the natural map
  \[
  G *_{G_x} Z_x \to X
  \]
  is a $G$-equivariant open embedding, so there must be $[v,z] \in G
  *_{G_x} Z_x$ mapping to $G$. Therefore $Z_x \cap G \not=
  \emptyset$. We also have that $G *_{G_x} Z_x \cong G/G_x \times Z_x$
  embeds as a Zariski open set in the variety $X$, so $Z_x$ and thus
  $Z_x \cap G$ is
  irreducible of dimension $\dim G - \dim G/G_x = \dim G_x$. Finally we
  note that the only irreducible $G_x$-invariant closed subsets of $G$ of
  dimension $\dim G_x$ are cosets.
\end{proof}

There is a special point in each orbit corresponding to the trivial coset:

\begin{dfn}\label{distinguished-point}
We say that a point $x \in X$ is \textbf{distinguished} if it has a
slice containing the identity of $G$. 
\end{dfn}

It follows from \cref{slice-coset} that every orbit contains a 
distinguished point, and we will see in \cref{closure-of-stabilizer} 
that every orbit contains at most one distinguished point.

The orbits of $X$ form a finite stratification, so each orbit
    $G\cdot x$ has a unique smallest $G$-invariant open neighborhood
defined as follows.
\begin{dfn}\label{U-x-def}
The \textbf{minimal $G$-invariant neighborhood} $U_x$ of $x \in X$ is given
by the union of all orbits $G\cdot y$ such that $G\cdot x \subset
\overline{G\cdot y}$.
\end{dfn}

There exists a unique slice through $x \in X$ contained in $U_x$, defined as follows:
\begin{dfn}
  The \textbf{minimal slice} through $x \in X$ is $Z_x \cap U_x$, where $Z_x$ is any slice of $x$.
\end{dfn}
It follows by \cite[Proposition II.4.21]{S94} that the minimal slice through $x$ is indeed a slice,
and uniqueness of the minimal slice for distinguished points (the case
of an arbitrary point follows easily) comes from the following observation:

\begin{lem}\label{closure-of-stabilizer}
  The minimal slice $Z_x$ at a distinguished point $x \in X$ is the
  closure of $G_x$ in $U_x$.
\end{lem}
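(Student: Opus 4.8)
The plan is to prove the two inclusions $Z_x \subseteq \overline{G_x}$ and $\overline{G_x} \subseteq Z_x$, with both closures taken inside $U_x$. First I set up the minimal slice carefully. Since $x$ is distinguished, I may choose a slice $W$ through $x$ containing the identity $e$ of $G$. Because $G = G\cdot e$ is the dense open orbit, its closure is all of $X$ and hence contains $G\cdot x$; thus $G \subseteq U_x$ by the definition of $U_x$. In particular $e \in U_x$, so the minimal slice $Z_x = W \cap U_x$ still contains $e$. Being a slice (as noted via \cite[Proposition II.4.21]{S94}), $Z_x$ satisfies \cref{slice-coset}, so $Z_x \cap G$ is a coset of $G_x$; since it contains $e$, this coset is exactly $G_x$.

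For the inclusion $Z_x \subseteq \overline{G_x}$, I will show that $G_x$ is dense in $Z_x$. The saturation $G \cdot Z_x$ is an open $G$-invariant neighborhood of $G\cdot x$ contained in $U_x$, so minimality of $U_x$ forces $G\cdot Z_x = U_x$, and the slice embedding identifies $U_x \cong G *_{G_x} Z_x \cong G/G_x \times Z_x$. As $U_x$ is open in the irreducible variety $X$ it is irreducible, and hence so is its factor $Z_x$ (exactly as in the proof of \cref{slice-coset}). Now $G$ is open in $X$, so $Z_x \cap G = G_x$ is a nonempty open subset of $Z_x$, and therefore dense by irreducibility. Taking closures inside $U_x$ and using that the closure of $G_x$ in the subspace $Z_x$ equals $\overline{G_x} \cap Z_x$, we conclude $Z_x \subseteq \overline{G_x}$.

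For the reverse inclusion it suffices to check that $Z_x$ is closed in $U_x$, since $G_x \subseteq Z_x$. Under the identification $U_x \cong G *_{G_x} Z_x$, the subvariety $Z_x \subseteq X$ is the image of $[e, Z_x]$, which is precisely the fiber $\tau^{-1}(eG_x)$ of the canonical fibration $\tau : U_x \to G/G_x$ (equivalently, the fiber $\{eG_x\}\times Z_x$ of $\mathrm{pr}_1$ under the product description). A fiber over a closed point is closed, so $Z_x$ is closed in $U_x$ and hence $\overline{G_x} \subseteq Z_x$. Combining the two inclusions yields $Z_x = \overline{G_x}$. The only mildly delicate step is the identification $U_x \cong G *_{G_x} Z_x$ together with the description of $Z_x$ as the fiber $\tau^{-1}(eG_x)$; once this is in place, both inclusions follow from short topological arguments using irreducibility of $X$ and openness of $G$, so I do not anticipate a genuine obstacle.
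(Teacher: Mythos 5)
Your proof is correct and follows essentially the same route as the paper: both inclusions rest on the identification $U_x \cong G *_{G_x} Z_x \cong G/G_x \times Z_x$ together with the observation that $Z_x \cap G$ is the coset $G_x$ itself, and you obtain $\overline{G_x} \subseteq Z_x$ exactly as the paper does, from the closedness of $Z_x$ in $U_x$. The only (harmless) local variation is in the reverse inclusion, where you note that $G_x = G \cap Z_x$ is a nonempty open, hence dense, subset of the irreducible $Z_x$, whereas the paper instead compares $\overline{G_x}$ and $Z_x$ as irreducible closed subvarieties of $U_x$ of the same dimension $\dim G_x$.
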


\begin{proof}
  Because $x$ is distinguished, $Z_x$ contains the identity of $G$. 
  Since $G \cap Z_x$ is a coset of $G_x$ by \cref{slice-coset}, $G \cap 
  Z_x = G_x$. Thus the closure of $G_x$ in $U_x$ is contained
  in $Z_x$. Because $G *_{G_x} Z_x \cong G/G_x \times Z_z$ embeds as
  an open set in $X$, we get that $Z_x$ is irreducible of dimension $\dim
  G - \dim G/G_x = \dim G_x$, and closed in
  $U_x$. Therefore
   the closure of $G_x$ in $U_x$ equals $Z_x$, since they are both irreducible closed 
   subvarieties of $U_x$ of the same dimension.
\end{proof}

\begin{zb}[Distinguished points and minimal slices of $\P^1$]\label{P1-eg}\leavevmode
  \begin{enumerate}[leftmargin=*]
    \item Consider $\P^1 = \C^\times \cup \{0\} \cup \{\infty\}$ as an 
  equivariant compactification of $\C^\times$. The distinguished points 
  are $1,0,$ and $\infty$ with minimal invariant open neighborhoods 
  $\C^\times,\C^\times \cup \{0\},$ and $\C^\times \cup \{\infty\}$ 
  respectively. The minimal slices are $\{1\}, \C^\times \cup \{0\},$ 
  and $\C^\times \cup \{\infty\}$ respectively. Note that $\P^1$ is a 
  non minimal slice through both $0$ and $\infty$.
\item Consider $\P^1 = \C \cup \{\infty\}$ as an equivariant 
  compactification of $\C$. The distinguished points are $0$ and 
      $\infty$ with minimal invariant open neighborhoods $\C$ and 
      $\P^1$ respectively. The minimal slices are $\{0\}$ and $\P^1$ 
      respectively. \qedhere
  \end{enumerate}
\end{zb}
In \cref{appendix} we demonstrate the notions developed in this section 
for matroid Schubert varieties.

We now prove that the class of varieties we are working with is
closed under taking slices:
\begin{lem}\label{closed-under-slices}
  If $x \in X$ is a
  distinguished point, then the minimal slice $Z_x$ is a normal partial
  compactification of $G_x$ satisfying \FO{} and \SL{}.
\end{lem}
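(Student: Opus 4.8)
The plan is to verify the four assertions in turn — that $Z_x$ is an equivariant partial compactification of $G_x$, that it is normal, that it satisfies \FO{}, and that it satisfies \SL{} — using as the two main inputs the identification $Z_x = \overline{G_x}$ in $U_x$ from \cref{closure-of-stabilizer}, together with the $G$-equivariant open embedding $G *_{G_x} Z_x \cong U_x \subseteq X$. For the partial compactification claim I would first recall from the proof of \cref{closure-of-stabilizer} that $x$ distinguished gives $e \in Z_x$ and $G \cap Z_x = G_x$. Since $G$ is open in $X$, the set $G_x = G \cap Z_x$ is open in $Z_x$, and since $Z_x = \overline{G_x}$ it is dense; as $Z_x$ is $G_x$-stable, the $G$-action restricts to a $G_x$-action extending the group law. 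Hence $Z_x$ is an equivariant partial compactification of the commutative linear algebraic group $G_x$.

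For normality I would use the splitting isomorphism $U_x \cong G/G_x \times Z_x$: since $U_x$ is open in the normal variety $X$ it is normal, and a product of varieties over $\C$ is normal if and only if each factor is (equivalently, the projection $U_x \to Z_x$ is faithfully flat, so normality descends). For \FO{} I would invoke \cref{orbits-and-stabs-of-GHZ} to biject the $G$-orbits in $U_x \cong G *_{G_x} Z_x$ with the $G_x$-orbits in $Z_x$; as $U_x$ is a union of orbits of $X$ and $X$ has only finitely many, $Z_x$ has finitely many $G_x$-orbits.

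The main obstacle is \SL{}: producing a $G_x$-slice through an arbitrary $z \in Z_x$, the difficulty being that a $G$-slice of $z$ in $X$ need not lie inside $Z_x$. The key idea that resolves this is to slice not at $z$ but at the distinguished point $p$ of the orbit $G\cdot z$ (which exists by \SL{} for $X$ and the observation following \cref{slice-coset}). Using commutativity $G_p = G_z$, and since $z = [e,z] \in G *_{G_x} Z_x$, \cref{orbits-and-stabs-of-GHZ} gives $G_z = (G_x)_z \subseteq G_x$. Therefore the minimal $G$-slice at $p$, which by \cref{closure-of-stabilizer} is $\overline{G_p}$ inside $U_p$, satisfies $\overline{G_p} \subseteq \overline{G_x} = Z_x$ because $G_p \subseteq G_x$ and $U_p \subseteq U_x$. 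Thus $p$ and its minimal slice $S_p$ already lie in $Z_x$; and since both $z$ and $p$ lie in the fiber $Z_x = \tau^{-1}(eG_x)$ of the canonical fibration $\tau\colon U_x \to G/G_x$, the element $g$ with $z = g\cdot p$ must satisfy $g \in G_x$, so $z \in G_x\cdot p$.

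It then remains to check that $S_p$ is a $G_x$-slice at $p$ in $Z_x$, and to transport it to $z$. For the former I would apply \cref{thm:open-criterion}: the natural map $G_x *_{G_p} S_p \to Z_x$ is injective on closed points by \cref{injectivity-criterion}, injectivity being inherited from $S_p$ being a $G$-slice in $X$, and the dimension count $\dim(G_x *_{G_p} S_p) = (\dim G_x - \dim G_p) + \dim S_p = \dim G_x = \dim Z_x$ shows it is dominant onto the normal, irreducible $Z_x$, hence birational, hence an open embedding. Alternatively, since $S_p \subseteq Z_x$ the slice isomorphism $G *_{G_p} S_p \cong U_p$ is now compatible over $G/G_x$ with $U_p \cong G *_{G_x}(U_p \cap Z_x)$, so \cref{associativity} together with cancellation of $G *_{G_x}(-)$ identifies $G_x *_{G_p} S_p \cong U_p \cap Z_x$ directly. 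Finally, writing $z = g\cdot p$ with $g \in G_x$, the translate $g\cdot S_p$ is a $G_x$-slice through $z$, which yields \SL{}. I expect the dimension/dominance bookkeeping in the first route, or equivalently the fibration-compatibility verification in the associativity route, to be the only genuinely delicate point.
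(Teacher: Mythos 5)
Your proposal is correct and follows essentially the same route as the paper: normality and \FO{} are obtained exactly as in the text, and for \SL{} the paper likewise takes the $G$-slice in $X$ through a point of $Z_x$, uses $G_y \subseteq G_x$ and \cref{closure-of-stabilizer} to see it lies inside $Z_x$, and then verifies that $G_x *_{G_y} Z_y \to Z_x$ is an open embedding via \cref{thm:open-criterion}. The only cosmetic differences are that you make the reduction to the distinguished point of the orbit (and translate back by $g \in G_x$) explicit, and you establish birationality by a dimension count rather than by restricting to $G_x *_{G_y} G_y \cong G_x$.
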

\begin{proof}
  By \cref{closure-of-stabilizer}, $Z_x$ is an equivariant partial compactification of 
  $G_x$, and $Z_x$ is normal by \cite[Proposition II.4.22]{S94}.
  By \cref{orbits-and-stabs-of-GHZ}, the $G_x$-orbits of $Z_x$ correspond to the $G$-orbits of an open set in $X$, so $Z_x$ has finitely many $G_x$-orbits.
  Finally we check that $Z_x$ has slices. For a point $y \in Z_x$,
  simply take the slice $Z_y$ through $y$ in $X$. Since $y \in Z_x \subseteq 
  U_x$, $G \cdot x \subseteq \overline{G \cdot y}$ by 
  definition of $U_x$. Therefore $G_y \subseteq G_x$, so $Z_y \subseteq 
  Z_x$ by \cref{closure-of-stabilizer}. Now consider the diagram

  \begin{center}
    \begin{tikzcd}
G*_{G_y}Z_y \arrow[r]               & X             \\
G_x *_{G_y} Z_y \arrow[r] \arrow[u] & Z_x \arrow[u]
\end{tikzcd}
    \end{center}
  where the horizontal maps are the open embeddings $[v,z] \mapsto v 
  \cdot z$, and the
  left vertical map is given by $[v,z] \mapsto [v,z]$. We wish to show
  that the bottom arrow is an open embedding, for which we use
  \cref{thm:open-criterion}. The bottom arrow restricts to 
  an isomorphism
  $
  G_x *_{G_y} G_y \cong G_x,
  $
  so it is birational. All three arrows except the bottom one are already known to be
  injective on closed points, so the bottom arrow is injective on closed 
  points.
\end{proof}

\begin{rmk}
  It follows from \cite[Proposition II.4.21]{S94} that every orbit closure satisfies \SL{} for the group $G/G_x$. So modulo normality, the class of varieties studied in this section is also closed under taking orbit closures.
\end{rmk}

\subsection{Topology of orbit stratification}
In the previous section we studied partial compactifications of 
tori and vector groups simultaneously. In this section, we will use 
properties of vector groups which fail for 
tori.

\begin{prop}\label{prop:proper-slice}
  If $X$ is an equivariant partial compactification of a vector group $V$ satisfying \FO{} and \SL{}, and $x \in X$ is a distinguished point, then the
  minimal slice $Z_x$ is proper and has $x$ as the unique $V_x$-fixed point.
\end{prop}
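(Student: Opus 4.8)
The plan is to prove the two assertions separately: that $x$ is the unique $V_x$-fixed point of $Z_x$, which follows cleanly from the slice formalism, and that $Z_x$ is proper, which is where the vector-group hypothesis does the real work. For the fixed point, I would use that by \cref{closed-under-slices} the variety $Z_x$ is a normal partial compactification of $V_x$ satisfying \FO{} and \SL{}, and that, $Z_x$ being the minimal slice, $U_x \cong V *_{V_x} Z_x$. Applying \cref{orbits-and-stabs-of-GHZ}, the $V_x$-orbits of $Z_x$ correspond bijectively to the $V$-orbits of $U_x$, and for each $z \in Z_x$ the stabilizer satisfies $V_z = (V_x)_z \subseteq V_x$. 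Thus $z$ is $V_x$-fixed exactly when $V_z = V_x$, i.e. when the corresponding orbit $V\cdot z \subseteq U_x$ has full stabilizer, hence dimension $\dim V - \dim V_x = \dim(V\cdot x)$. By the definition of $U_x$ (\cref{U-x-def}), every orbit $O \subseteq U_x$ contains $V\cdot x$ in its closure, so any $O \neq V\cdot x$ satisfies $V\cdot x \subseteq \overline{O} \setminus O$ and therefore $\dim O > \dim(V\cdot x)$. Hence $V\cdot x$ is the only orbit of $U_x$ with stabilizer $V_x$, and the only $V_x$-fixed point of $Z_x$ is the point of $Z_x$ lying on $V\cdot x$, which is $x$ itself (and $x$ is indeed $V_x$-fixed, being the base point of its own slice). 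In particular $\{x\}$ is the unique closed orbit of $Z_x$.

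For properness I would embed $Z_x$ into a proper normal $V_x$-variety $\bar Z$ by Sumihiro's equivariant completion theorem, with $V_x$ dense in $\bar Z$, and aim to show $\bar Z \setminus Z_x = \emptyset$. Since $\bar Z \setminus Z_x$ is closed and $V_x$-stable, if it is nonempty it contains a closed $V_x$-orbit; but a closed orbit in the proper variety $\bar Z$ is complete, while an orbit of the vector group $V_x$ is affine, so it must be a single fixed point $p$. Thus it suffices to prove that every $V_x$-fixed point of $\bar Z$ already lies in $Z_x$. The base of the induction on $m = \dim V_x$ is where the vector-group property is visible: for $m=1$, a normal partial compactification of $\Ga = \A^1$ possessing a (necessarily boundary) fixed point must be $\P^1$, because $\A^1$ has a single end at infinity, in contrast with $\Gm$, whose two ends produce the non-proper slice $\Gm \cup \{0\}$ of \cref{P1-eg}. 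This is exactly the property of vector groups that fails for tori.

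The main obstacle is to show that the fixed point $p$ lies in $Z_x$. Since $V_x$ is dense in $\bar Z$, there is a discrete valuation ring $R$ with fraction field $K$ and a point $\phi \in V_x(K) = K^m$ with $\lim_{\pi \to 0}\phi = p$, and the crux is that this limit should depend only on the leading coefficient $w_0$ of $\phi$ at its most negative order, coinciding with the one-parameter limit $\lim_{t\to\infty} t w_0$, which I would then locate inside $Z_x$ using the slice structure and the inductive hypothesis (via \cref{closed-under-slices}, each proper boundary slice $Z_y$ has lower dimension and a unique fixed point by the first paragraph, hence is proper by induction, which pins down limits in all strata of $Z_x$ of positive stabilizer dimension). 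Establishing this leading-term domination — that a pole of arbitrary order in an arbitrary direction degenerates, through the group law and the induction on $m$, to a one-parameter degeneration in the single direction $w_0$ whose limit is captured by the boundary of $Z_x$ — is the hard step, and is precisely the place where the ``single end at infinity'' of a line in $V_x$ is used and where I expect to spend the most effort.
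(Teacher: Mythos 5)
Your first paragraph is correct: the identification of $V_x$-fixed points of $Z_x$ with $V$-orbits in $U_x$ whose stabilizer is all of $V_x$, combined with the fact that every orbit of $U_x$ other than $V\cdot x$ contains $V\cdot x$ in its boundary and is therefore of strictly larger dimension, does pin down $x$ as the unique fixed point. This is a valid, purely orbit-theoretic route to the uniqueness statement.

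The properness half, however, has a genuine gap, and it is the heart of the proposition. Your reduction via Sumihiro's completion is sound as far as it goes (the boundary, if nonempty, contains a closed orbit, which must be a $V_x$-fixed point $p$), but the step you yourself flag as ``the crux'' --- that every $V_x$-fixed point of $\bar Z$ already lies in $Z_x$, via a leading-term analysis of a $K$-point of $V_x$ over a DVR --- is not carried out, and it is far from routine: a priori the limit of $\phi(\pi)$ as $\pi\to 0$ need not be governed by the one-parameter limit of its leading coefficient, and making the induction on $\dim V_x$ close requires exactly the kind of control over degenerations that the proposition is supposed to provide. As written, the proposal proves uniqueness of the fixed point but leaves properness as an announced strategy. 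The paper avoids the completion-and-valuation approach entirely: since $V_x$ is a vector group, every $V_x$-orbit in $Z_x$ is isomorphic to an affine space, so by \cref{closed-under-slices} the orbits form an algebraic cell decomposition of $Z_x$ with $x$ as a zero-dimensional cell; then the identity $\dim H^{2i}_c(Z_x;\Q)=\#\{i\text{-dimensional cells}\}$ (proved by the long exact sequence for the open--closed decomposition) forces $H^0_c(Z_x;\Q)\neq 0$, which for a connected variety is only possible if $Z_x$ is proper, and simultaneously shows the zero-dimensional cell is unique (\cref{lem:Z-proper}). That single cohomological count delivers both conclusions at once and is where the contrast with $\Gm$ (whose orbits are not affine spaces, cf.\ \cref{P1-eg}) actually enters. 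I would recommend replacing your second and third paragraphs with an argument of this type, or else supplying a complete proof of the leading-term domination you describe.
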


The proof follows from a general topological observation about
varieties stratified into affine spaces.
We say that an \emph{algebraic cell decomposition} of a variety $X$
is a partition $X = \sqcup_\alpha S_\alpha$ into finitely many locally
closed
subvarieties $S_\alpha$ called \emph{cells}, such that each
cell is isomorphic to an affine space and the closure of a cell
is a union of cells.

\begin{lem}\label{lem:Z-proper}
  Suppose that $Z$ is a connected variety with an algebraic cell decomposition
  that has at least one zero dimensional cell. Then $Z$ is proper and
  has exactly one zero dimensional cell.
\end{lem}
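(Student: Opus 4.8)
The plan is to compute the compactly supported rational cohomology $H^{\bullet}_c(Z)$ directly from the cell decomposition and to read both conclusions off from degree zero. The key input is that for a cell $S_\alpha \cong \mathbb{A}^{n_\alpha}$ one has $H^i_c(S_\alpha;\mathbb{Q}) = \mathbb{Q}$ if $i = 2n_\alpha$ and $0$ otherwise (this is just $H^{\bullet}_c(\C^{n_\alpha})$ analytically). In particular a cell contributes to $H^0_c$ precisely when it is zero dimensional, so the number of zero dimensional cells should be encoded by $\dim_{\mathbb{Q}} H^0_c(Z)$.

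First I would record the elementary observation that a cell $U$ of maximal dimension is open in $Z$: such a $U$ cannot lie in the closure of any other cell, so $Z \setminus U = \bigcup_{S' \neq U} \overline{S'}$ is a finite union of closed sets. This lets me peel cells off one at a time. Then, by induction on the number of cells, I would prove that $H^{\bullet}_c(Z)$ is concentrated in even degrees with $\dim_{\mathbb{Q}} H^{2k}_c(Z)$ equal to the number of $k$ dimensional cells. For the inductive step, choose an open maximal dimensional cell $U \cong \mathbb{A}^d$ and set $F = Z \setminus U$, a closed subvariety (possibly reducible) inheriting a cell decomposition with one fewer cell. The long exact sequence $\cdots \to H^i_c(U) \to H^i_c(Z) \to H^i_c(F) \to H^{i+1}_c(U) \to \cdots$, together with the concentration of $H^{\bullet}_c(U)$ in degree $2d$, forces $H^i_c(Z) \cong H^i_c(F)$ for $i \neq 2d$ and a short exact sequence $0 \to \mathbb{Q} \to H^{2d}_c(Z) \to H^{2d}_c(F) \to 0$; combined with the inductive hypothesis this gives the claim, since the cells of $Z$ are those of $F$ together with $U$. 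Crucially this step uses neither irreducibility nor connectedness, so it survives the possible reducibility of $F$.

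In particular $\dim_{\mathbb{Q}} H^0_c(Z)$ equals the number of zero dimensional cells. To finish, I would invoke the standard identification of $H^0_c(Z;\mathbb{Q})$ with the space of compactly supported locally constant functions, whose dimension is the number of compact connected components of $Z$. Since $Z$ is connected, this dimension is $1$ if $Z$ is compact and $0$ otherwise. The hypothesis that there is at least one zero dimensional cell gives $\dim H^0_c(Z) \geq 1$, hence $Z$ is compact; as a finite type $\C$-scheme is proper exactly when its analytification is compact, $Z$ is proper. Then $\dim H^0_c(Z) = 1$, so there is exactly one zero dimensional cell.

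The main obstacle is the bookkeeping in the inductive computation: one must choose the cell to remove so that its complement is genuinely closed and still carries an algebraic cell decomposition, and then check that the long exact sequence degenerates because every cell sits in even real codimension. Everything after the cohomology computation — the interpretation of $H^0_c$ as counting compact components and the compactness criterion for properness — is standard.
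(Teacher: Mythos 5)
Your proposal is correct and is essentially the paper's own argument: both compute $H^\bullet_c(Z;\Q)$ by induction on the number of cells via the long exact sequence, establish $\dim H^{2i}_c(Z;\Q)=\#\{i\text{-dimensional cells}\}$, and read off properness and uniqueness of the point cell from $H^0_c$ using connectedness. The only (immaterial) difference is that you peel off a maximal-dimensional open cell at each step, while the paper removes a minimal-dimensional closed cell.
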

\begin{proof}
  Consider the singular cohomology with compact support $H^i_c(Z; \Q)$.
  Since $Z$ is connected, we have that $H^0_c(Z; \Q)$ is zero if $Z$
  is not proper and one dimensional if $Z$ is proper.
  The lemma follows from the well known fact that
  \[
    \dim H^{2i}_c(Z; \Q) = \#\{\text{$i$-dimensional cells in $Z$}\}.
  \]
  One way to prove the above equation is by inducting on the
  number of cells as follows. Suppose $S$ is a cell of lowest
  dimension in $Z$ and $U$ is its open complement. Since $S \cong
  \mathbb{A}^r$ for some $r$, $H^{2r}_c(S; \Q) = \Q$ and $H^{i}_c(S;
  \Q) = 0$ for $i \not= 2r$.  Then the above equation follows
  from induction using long exact sequence
  \[
  \ldots \to H^i_c(U; \Q) \to H^i_c(Z; \Q) \to H^i_c(S; \Q) \to
  H^{i+1}_c(U; \Q) \to \ldots.\qedhere
  \]
\end{proof}

\begin{proof}[Proof of \cref{prop:proper-slice}]
  We have by \cref{closed-under-slices} that $Z_x$ has finitely many
  $V_x$ orbits. Each orbit of $V_x$ is isomorphic to an
  affine space, and as is true of any algebraic group action, orbits are
  locally closed and the closure of an orbit is a union of
  orbits. Thus the $V_x$-orbits of $Z_x$ form an algebraic cell
  decomposition. Since $x \in Z_x$ is a zero dimensional cell, the
  proposition follows from \cref{lem:Z-proper}.
\end{proof}

\begin{rmk}
  In the notation of \cref{prop:proper-slice}, it follows that the 
  minimal slice through $x$ is the unique slice through $x$, as opposed 
  to the torus case. See \cref{P1-eg}.
\end{rmk}
\begin{rmk}
  In the case where $X$ is a toric variety with torus $T$, the minimal slice $Z_x$ through a point $x \in X$ is not proper but
  rather affine. However $Z_x$ still has $x$ as the
  unique $T_x$-fixed point for a different reason. This is due to the 
  fact that disjoint $T$-invariant 
  closed sets in an affine $T$-variety can be separated by an invariant 
  function \cite[Lemma 6.1]{D03}. Since $Z_x$ has a dense $T_x$-orbit, 
  all invariant functions are constant. Therefore all invariant closed 
  sets intersect, so $x$ is the only $T_x$-fixed point.
\end{rmk}

\subsection{Slices and one-parameter subgroups}\label{slices-and-1psg}

In this section we prove that
\cref{main-thm-noncompact-slices} and \cref{main-thm-noncompact-1psg} in \cref{main-thm-noncompact}
  are equivalent. We break 
the proof into two lemmas.

\begin{dfn}\label{V-circ-def}
Suppose that $X$ is an equivariant partial compactification of a vector 
group $V$, and $x \in X$. Define
\[
    V_x^\circ = V_x \setminus \bigcup V_y
\]
  where the union is over $y \in X$ such that $V_y \subsetneq V_x$.
\end{dfn}

\begin{lem}\label{slices-to-1psg}
  Suppose $X$ is a normal equivariant partial compactification of a 
  vector group $V$, satisfying \FO{} and \SL{}. Let $x \in X$ be a 
  distinguished point, and let $v \in V$. Then $\lim_{t \to \infty} tv = x$ if and only
  if $v \in V_x^\circ$. In particular, $X$ satisfies \OP{}.
\end{lem}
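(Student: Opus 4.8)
The plan is to reduce everything to the minimal slice $Z_x$, which by \cref{prop:proper-slice} is a \emph{proper} equivariant compactification of the vector group $V_x$ whose unique $V_x$-fixed point is $x$; being proper, $Z_x$ is also closed in $X$. Throughout I use that $V_x$ is a linear subspace of $V$ and that, since $x$ is distinguished, $Z_x \cap V = V_x$ (from \cref{slice-coset} and \cref{closure-of-stabilizer}), so that $tv \in Z_x$ whenever $v \in V_x$. Two elementary observations will carry most of the argument. First, for $v \in V_x$ the curve $t \mapsto tv$ lies in the proper variety $Z_x$, so the limit $y_0 := \lim_{t\to\infty} tv$ exists in $Z_x$; applying the translation by $sv$ and reparametrizing $t \mapsto t+s$ gives $(sv)\cdot y_0 = y_0$ for every $s \in \C$, i.e. $\C v \subseteq V_{y_0}$. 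Second, since $y_0 \in Z_x \subseteq U_x$, the defining property of $U_x$ (as used in the proof of \cref{closed-under-slices}) yields $V_{y_0} \subseteq V_x$.

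Before either direction, I would dispose of the case $v \notin V_x$, showing the limit cannot be $x$. Using the product decomposition $U_x \cong V/V_x \times Z_x$ from the splitting discussion preceding \cref{splitting-product}, the point $tv$ corresponds to $(t\bar v,\, ts(v))$ where $\bar v \in V/V_x$ is the nonzero image of $v$, while $x$ corresponds to $(0,x)$. Since $V/V_x$ is an affine space not compactified inside $U_x$, the first coordinate $t\bar v$ diverges as $t \to \infty$, so $tv$ leaves $U_x$ and its limit (if any) is not $x$. This handles the half of the $\Leftarrow$ direction requiring $v \in V_x$, and one case of $\Rightarrow$.

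For the \emph{if} direction, suppose $v \in V_x^\circ$. Then $v \in V_x$, so $y_0 = \lim_{t\to\infty} tv$ exists in $Z_x$, with $v \in V_{y_0}$ and $V_{y_0} \subseteq V_x$ by the two observations. If the inclusion were strict, then $y_0$ would be a point of $X$ with $V_{y_0} \subsetneq V_x$ and $v \in V_{y_0}$, contradicting $v \in V_x^\circ$. Hence $V_{y_0} = V_x$, so $y_0$ is a $V_x$-fixed point of $Z_x$, and by uniqueness in \cref{prop:proper-slice} we get $y_0 = x$. For the \emph{only if} direction I argue the contrapositive: if $v \in V_x \setminus V_x^\circ$, there is a flat $F' = V_y \subsetneq V_x$ with $v \in F'$. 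The main subtlety — and the step I expect to be the chief obstacle — is that such a $y$ need not lie in $U_x$, so one cannot argue inside $Z_x$. The fix is to pass to a distinguished point $x'$ in the orbit $V\cdot y$ (which exists and has $V_{x'} = V_y = F'$, since stabilizers are constant on orbits and every orbit has a distinguished point by \cref{slice-coset}). Then $v \in F' = Z_{x'} \cap V$, so $tv \in Z_{x'}$, and because $Z_{x'}$ is proper (hence closed in $X$) the limit $\lim_{t\to\infty} tv$ lies in $Z_{x'}$ and therefore has stabilizer contained in $V_{x'} = F' \subsetneq V_x$; it cannot equal $x$, whose stabilizer is $V_x$.

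Finally, \OP{} follows: each orbit contains a distinguished point $x$, and $V_x^\circ$ is $V_x$ minus finitely many proper linear subspaces, hence nonempty and (when $\dim V_x \geq 1$) containing a nonzero $v$; the one-dimensional subgroup $\C v$ then has closure containing $x = \lim_{t\to\infty} tv$, meeting $V \cdot x$. The open orbit is handled trivially, as any line through the origin lies in it.
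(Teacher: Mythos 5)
Your proof is correct and follows essentially the same route as the paper: both directions hinge on \cref{prop:proper-slice} (the minimal slice is proper with $x$ as its unique $V_x$-fixed point), with the limit forced into a proper slice and then identified by comparing stabilizers. The paper dispatches the converse in one line by observing that the sets $V_y^\circ$ partition the stabilizers' union, whereas you carry out the case analysis explicitly; your extra cases are sound, with the one quibble that ``$tv$ leaves $U_x$'' is loosely worded --- $tv$ stays in $V \subseteq U_x$ for all $t$, but $\tau(tv) = t\bar v$ has no limit in $V/V_x$, which is what your continuity argument actually uses.
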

\begin{proof}
  Suppose that $v \in V_x^\circ$. By \cref{prop:proper-slice}, $Z_x$ is 
  proper, so $\lim_{t \to \infty} tv$ must converge to a boundary point of $Z_x 
  \supseteq V_x$. In addition, $v$ lies in the stabilizer of $\lim_{t \to \infty} tv$, so $\lim_{t \to \infty} tv$ be a
  $V_x$-fixed point. By \cref{prop:proper-slice}, $x$ is the unique $V_x$-fixed point in
  $Z_x$, so $\lim_{t \to \infty} tv = x$. To prove the other direction, we note that $V$ is partitioned into sets of the form $V^\circ_y$ for $y \in X$, so if $v \not\in V^\circ_x$ then $\lim_{t \to \infty} tv = y$ for some $y \not= x$.
\end{proof}

\begin{lem}\label{1psg-to-slices}
  Suppose that $X$ is a normal equivariant partial compactification of $V$ satisfying \FO{} and \OP{}. Then $X$ satisfies \SL{}.
\end{lem}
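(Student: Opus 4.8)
The plan is to construct, for every point of $X$, an explicit slice and to verify the defining open-embedding property of \cref{slice-dfn} using Zariski's main theorem (\cref{thm:open-criterion}). By \OP{} together with \FO{}, every orbit contains a point of the form $x = \lim_{t \to \infty} tv$ for some $v \in V$: the closure in $X$ of the one-parameter subgroup $\C v$ meets the orbit, and since $\C v \cong \mathbb{A}^1$ is locally closed in $X$ its boundary consists of the single point $\lim_{t\to\infty}tv$. Such a limit is fixed by $v$, so $v \in V_x$. Because the existence of a slice at a point is preserved under translating by $V$ (the stabilizer is unchanged as $V$ is commutative), it suffices to produce a slice at each such $x$. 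I would take
\[
Z_x := \overline{V_x}^{U_x},
\]
the closure of the stabilizer subgroup $V_x \subseteq V$ inside the minimal invariant neighborhood $U_x$ of \cref{U-x-def}. This set is closed in $U_x$, is $V_x$-stable, and contains $x$ (as $x = \lim_{t\to\infty}tv$ with $v \in V_x$). The target $U_x$ is normal, being open in $X$, and since $Z_x \subseteq U_x$ and $U_x$ is $V$-invariant the natural map $V *_{V_x} Z_x \to U_x$ is well defined; it is birational because over the open orbit $V \subseteq U_x$ it restricts to the isomorphism $V *_{V_x} (Z_x \cap V) = V *_{V_x} V_x \cong V$, using $Z_x \cap V = V_x$. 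Once the map is shown injective on closed points, \cref{thm:open-criterion} makes it an open embedding, and minimality of $U_x$ forces the image to be all of $U_x$, so $Z_x$ is a slice at $x$.

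By \cref{injectivity-criterion}, injectivity reduces to showing that if $h \cdot z_1 = z_2$ with $h \in V$ and $z_1, z_2 \in Z_x$, then $h \in V_x$. Writing $z_1 = \lim_i a_i$ with $a_i \in V_x$, we get $z_2 = \lim_i (h + a_i)$, so $z_2$ lies in the closure of the coset $h + V_x$ as well as in $\overline{V_x}$. Thus the statement is equivalent to the assertion that distinct cosets of $V_x$ have disjoint closures in $U_x$, which I would deduce from the following key claim: every linear functional $\ell \in V^*$ with $\ell|_{V_x} = 0$ extends to a regular function on $U_x$. Granting this, $\ell$ is continuous on $U_x$, so evaluating at $z_2$ along the two sequences gives $\ell(z_2) = \lim_i \ell(a_i) = 0$ and $\ell(z_2) = \lim_i \ell(h + a_i) = \ell(h)$; hence $\ell(h) = 0$ for every $\ell$ vanishing on $V_x$, which means $h \in V_x$ since $V_x$ is cut out by such functionals.

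The key claim is where \OP{} and the vector group structure are essential, and I expect it to be the main obstacle. The idea is that $\ell$, regular on $V$, defines a rational function on the normal variety $U_x$ whose pole divisor $P$ is $V$-invariant: for $g \in V$ one has $g^* \ell = \ell + \ell(g)$ as rational functions, and a constant shift does not change poles. Hence $P$ is a union of closures of codimension-one orbits. If some codimension-one orbit $O \subseteq U_x$ lay in $P$, let $p \in O$ be its distinguished point, reached by a one-parameter subgroup $\C v$ with $\lim_{t\to\infty}tv = p$. Then $v$ fixes $p$, so $v \in V_p \subseteq V_x$ (the inclusion because $p \in U_x$ forces $x \in \overline{V \cdot p}$, and stabilizers can only grow under specialization for a commutative group), and therefore $\ell(v) = 0$. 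Pulling $\ell$ back along the normalization $\P^1 \to \overline{\C v}$, it restricts to the constant function $0$ on the dense $\mathbb{A}^1 = \C v$ (since $\ell(tv) = t\,\ell(v) = 0$), hence is the constant morphism with value $0$ to $\P^1$ and takes the value $0$, not $\infty$, at the point over $p$. This contradicts $p \in O \subseteq P$. Therefore $P \cap U_x = \emptyset$, proving the claim and completing the proof that $X$ satisfies \SL{}.
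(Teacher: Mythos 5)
Your overall strategy tracks the paper's quite closely: everything is reduced to the claim that a linear functional $\ell \in V^\vee$ vanishing on $V_x$ extends to a regular function on $U_x$; by normality this reduces to showing $\ell$ has no pole along any codimension-one orbit $O \subseteq U_x$; and \OP{} supplies a curve $\overline{\C v}$ with boundary point $p \in O$ and $v \in V_p \subseteq V_x$, hence $\ell(v)=0$. (Your choice of $Z_x$ as the closure of $V_x$ in $U_x$ rather than as $\tau^{-1}(0)$, and your sequence-and-continuity argument for injectivity, are harmless variants of what the paper does.) The gap is in the last step of the key claim. From the fact that $\ell$ pulls back to the constant $0$ on the normalization of $\overline{\C v}$ you conclude that $p$ cannot lie on the polar divisor $P$. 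That inference is invalid: since $\ell \equiv 0$ on $\C v$, the point $p$ lies on the closure of the zero locus $\ker(\ell|_V)$, and nothing prevents $p$ from simultaneously lying on $\mathrm{Supp}(P)$ --- it would then be a point of indeterminacy of the rational map $\ell \colon U_x \dashrightarrow \P^1$ (a codimension-$\geq 2$ locus), where the restriction of $\ell$ to a curve through $p$ can take any value whatsoever. Compare $s/u$ on $\mathbb{A}^2$: it restricts to the constant $0$ on the curve $\{s=0\}$, which passes through the origin, a point of the polar divisor $\{u=0\}$. So your computation along $\overline{\C v}$ does not rule out $O \subseteq P$; you have evaluated along exactly the curve, and at exactly the point, where the zero and polar divisors could cross.

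What you need is a statement at the generic point of $O$ (the order of $\ell$ along $\overline{O}$), and the fix --- which is what the paper does --- is to translate the curve by a generic $g \in V$. On $g + \C v$ the function $\ell$ is the nonzero constant $\ell(g)$, and its boundary point is $g \cdot p$, which for generic $g$ is a general point of $O$ and so avoids the indeterminacy locus and every divisorial component other than possibly $\overline{O}$ itself; one then argues with $\ell^{-1}$ (constant and nonzero on $g+\C v$, hence nonvanishing at $g \cdot p$ if defined there) that $\ell$ cannot have a pole along $O$. With that replacement, the rest of your argument goes through.
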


\begin{proof}
  We wish to construct a slice through a point $x \in X$. We first explain
  why it is enough to show that the quotient map
  \[
  V \to V/V_x
  \]
  extends to a $V$-equivariant map
  \[
  \tau:U_x \to V/V_x,
  \]
  where $U_x$ is the minimal invariant open neighborhood of 
  \cref{U-x-def}.
  We can assume without loss of generality that $\tau(x) = 0$, since 
  the translation of a slice is a slice.
  Setting $Z_x := \tau^{-1}(0)$, we have that $V_x$ acts on
  $Z_x$ and $x \in Z_x$. To show that 
  $Z_x$ is a slice, we must check that
  the natural map
  \[
    V *_{V_x} Z_x \to X,\quad [v,z] \mapsto v \cdot z
  \]
  is an open embedding. For this we use \cref{thm:open-criterion}. As
  before we have that $V*_{V_x} Z_x \to X$ restricts to an isomorphism
  $V *_{V_x} V_x \cong V$, so it is birational. By \cref{injectivity-criterion} we must show that if $v_1,v_2 \in V$
  and $z_1,z_2 \in Z_x$ such that
  \[
  v_1\cdot z_2 = v_1 \cdot z_2,
  \]
  then $v_1 - v_2 \in V_x$. We check this by applying $\tau$:
  \begin{align*}
    \tau(v_1) \cdot \tau(z_1) &= \tau(v_2) \cdot \tau(z_2) &&\text{by equivariance of $\tau$,}\\
    \tau(v_1) &= \tau(v_2) &&\text{because $z_1,z_2 \in Z_x = \tau^{-1}(0)$,}\\
    v_1-v_2 &\in V_x &&\text{because $\tau$ extends $V \to V/V_x$.}
  \end{align*}

  Next we show how to construct $\tau$. We wish to construct a $V$-equivariant map
  \[
  \Sym(V/V_x)^\vee \to H^0(U_x, \OO_X),
  \]
  so it is enough to show that if $f \in V^\vee$ vanishes on $V_x$, then
  $f$ can extend to $U_x$. Since $U_x$ is normal, it suffices to show
  that
  $f$ does not have a pole along any codimension one orbit. Let $L
  \subseteq V$ be a one dimensional vector subspace of $V$, and let
  $y$ be the boundary point of $L$ in $X$. Assume that $y \in U_x$. By
  our assumption that $X$ satisfies \OP{}, it is enough to show that $f$ does not have a
  pole along $V \cdot y$. Since the action of $L$ fixes the boundary
  of $L$, $L \subseteq V_y$. We also have $V_y \subseteq V_x$ by
  definition of
  $U_x$. Therefore $f$ vanishes
  on $L$. Now let $L'$ denote the translation of $L$ by a generic
  vector, and $y'$ the boundary point of $L'$. Since $f$ is linear,
  $f$ is constant and nonzero on $L'$. Thus $f^{-1}$ is constant and
  nonzero on $L'$. If $f^{-1}$ is undefined
  at $y'$, then since $y'$ is generic in $V \cdot y$, $f$ has a zero
  along $V \cdot y$. If on the other hand $f^{-1}$ is defined at $y'$,
  then $f^{-1}$ cannot vanish at $y'$ by continuity, so $f$ does not
  have a pole along $V \cdot y$.
\end{proof}

This completes the proof that the statements
\cref{main-thm-noncompact-slices} and \cref{main-thm-noncompact-1psg} in \cref{main-thm-noncompact} are equivalent.

\section{The orbit-flat correspondence}\label{sec:orbit-flat-correspondence}

Now that we have proved the equivalence of the statements
\cref{main-thm-noncompact-slices} and 
\cref{main-thm-noncompact-1psg} in \cref{main-thm-noncompact},
we will refer to an 
equivariant partial compactification of a vector group $V$ which 
satisfies either of these conditions as a \emph{linear 
$V$-variety}.
In \cref{slices-to-1psg}, we showed that
  if $X$ is a linear $V$-variety and $x \in X$ is a distinguished 
  point, then $v \in V_x^\circ$ (\cref{V-circ-def}) if and only
  if $ \lim_{t \to \infty} tv = x$.
As a consequence, we have:
\begin{cor}\label{orbit-flat}
  If $X$ is a linear $V$-variety, then there is a canonical bijection 
  between any two of the following sets.
  \begin{itemize}[leftmargin=*]
  \item Orbits of $X$
  \item Distinguished points of $X$
  \item Stabilizers of points of $X$
\end{itemize}
Moreover, each of the above sets is functorial on the category of
normal equivariant partial compactifications of vector groups, and the
bijections between them are natural.
\end{cor}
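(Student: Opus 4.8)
The plan is to first produce the bijections for a fixed linear $V$-variety $X$, and then to promote each of the three sets to a functor to $\mathbf{Set}$ and verify that the bijections assemble into natural transformations. The bijection between orbits and distinguished points is essentially already established: every orbit contains a distinguished point by \cref{slice-coset}, and at most one by \cref{closure-of-stabilizer}, so $x \mapsto V \cdot x$ is a bijection from distinguished points to orbits.

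For the third set I would use the assignment sending an orbit $V\cdot p$ to the stabilizer $V_p$. Since $V$ is commutative, $V_{g\cdot p} = V_p$ for all $g \in V$, so this is well defined on orbits and visibly surjects onto the set of all stabilizers of points of $X$. For injectivity, suppose two orbits share a stabilizer $S$ and let $x,x'$ be their distinguished points, so $V_x = V_{x'} = S$. Because $V^\circ_x$ (\cref{V-circ-def}) is computed from $V_x$ together with the fixed finite collection of stabilizers of $X$, we have $V^\circ_x = V^\circ_{x'}$; moreover this set is nonempty, since each stabilizer, being a closed subgroup of the vector group, is a linear subspace, and a complex vector space is not a finite union of proper subspaces. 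Choosing any $v \in V^\circ_x = V^\circ_{x'}$, \cref{slices-to-1psg} gives $x = \lim_{t\to\infty} tv = x'$, so the two orbits coincide. Composing with the previous bijection yields canonical bijections between all three sets.

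For functoriality, a morphism $\phi\colon X \to X'$ restricts to a linear map $T\colon V \to V'$, and equivariance gives $\phi(V\cdot p) = T(V)\cdot\phi(p) \subseteq V'\cdot\phi(p)$; hence $\phi$ induces $V\cdot p \mapsto V'\cdot\phi(p)$ on orbits and, by commutativity, a well-defined $V_p \mapsto V'_{\phi(p)}$ on stabilizers, with functoriality in $\phi$ immediate. The crucial point is that $\phi$ carries distinguished points to distinguished points: writing a distinguished point as $x = \lim_{t\to\infty} tv$ with $v \in V^\circ_x$ (\cref{slices-to-1psg}), continuity and equivariance of $\phi$ give
\[
\phi(x) = \lim_{t\to\infty} \phi(tv) = \lim_{t\to\infty} t\,T(v),
\]
and the right-hand limit is a distinguished point of $X'$ by \cref{slices-to-1psg} applied to $X'$. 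Thus the distinguished-point assignment is a functor via the restriction of $\phi$, and $\phi(x)$ is exactly the distinguished point of $V'\cdot\phi(x)$.

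With the three functors in hand, naturality is then forced by these compatible descriptions. For the orbit–distinguished-point bijection, the induced maps on both sets are given by applying $\phi$, and they agree with $x \mapsto V\cdot x$ precisely because the distinguished point of $V'\cdot\phi(x)$ is $\phi(x)$. For the stabilizer bijection, the relevant square commutes by the very definition $V_p \mapsto V'_{\phi(p)}$ of the induced map. I expect the main obstacle to be exactly the preservation of distinguished points under morphisms; everything else is bookkeeping once the limit characterization of \cref{slices-to-1psg} is available. This step is what forces the use of continuity of $\phi$ together with the fact that $X'$ is again a linear $V'$-variety, so that $\lim_{t\to\infty} t\,T(v)$ converges to a distinguished point.
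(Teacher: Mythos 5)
Your proof is correct and takes essentially the same route as the paper: the orbit--distinguished-point bijection is the one already noted after \cref{distinguished-point}, injectivity on stabilizers comes from recovering a distinguished point $x$ as $\lim_{t\to\infty}tv$ for $v\in V_x^\circ$ via \cref{slices-to-1psg}, and functoriality follows because morphisms carry limits of one-parameter subgroups to limits of one-parameter subgroups. You merely make explicit some details the paper leaves implicit (nonemptiness of $V_x^\circ$ and the continuity step $\phi(\lim_{t\to\infty}tv)=\lim_{t\to\infty}tT(v)$).
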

\begin{proof}
  The correspondence between orbits and distinguished points is
  automatic, and we have by \cref{slices-to-1psg} that any
  distinguished point $x \in X$ can be recovered from its stabilizer
  $V_x$ by taking the limit $\lim_{t \to \infty} tv$ for $v \in
  V_x^\circ$. Thus all three sets are in correspondence.

  Suppose that $T$ is a morphism of linear vector group varieties. It
  is automatic that orbits are mapped inside of orbits and stabilizers
  are mapped inside of stabilizers. Since distinguished points are the
  set of points that arise as limits of one-parameter subgroups,
  distinguished points are mapped to distinguished points. Naturality
  of these correspondences follows formally.
\end{proof}

\begin{dfn}\label{def:L-of-X}
Let $X$ be a linear $V$-variety. The \textbf{partial hyperplane arrangement $\LL(X)$ associated to $X$} is the collection
of stabilizers of points in $X$.
\end{dfn}

To justify the definition of $\LL(X)$, we will prove:
\begin{prop}\label{prop:stabs-are-flats}\leavevmode
  \begin{propenum}[leftmargin=*]
    \item \label{stabs-are-flats-compact} If $X$ is a proper linear $V$-variety, then $\LL(X)$ is the
  collection of flats of an essential hyperplane arrangement in $V$.

\item \label{stabs-are-flats-noncompact} If $X$ is a linear $V$-variety then $\LL(X)$ is a
  partial hyperplane arrangement in $V$.
  \end{propenum}
\end{prop}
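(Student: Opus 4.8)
The plan is to verify directly that the collection $\LL(X)$ of point stabilizers satisfies the axioms of \cref{def:partial-hyperplane-arrangement}, and in the proper case additionally that it is the full lattice of flats of an essential arrangement, proceeding by induction on $d = \dim V$ and proving part \cref{stabs-are-flats-compact} first. The starting observation is that since $V \cong \Ga^d$ in characteristic zero, every closed subgroup of $V$ — in particular every stabilizer $V_x$ — is a linear subspace; thus $\LL(X)$ is automatically a finite collection of subspaces, and only the incidence conditions remain. That $\{0\} \in \LL(X)$ is immediate, since the dense orbit is $V$ acting on itself by translation, so a point of the open orbit has trivial stabilizer.

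The first substantive step is closure under intersection. Given distinguished points $x_1,x_2$ with $F_i = V_{x_i}$, I would take a generic $v \in F_1 \cap F_2$ and consider $x := \lim_{t \to \infty} tv$. Because $v \in F_1 = Z_{x_1} \cap V$ and the minimal slice $Z_{x_1}$ is proper (\cref{prop:proper-slice}, \cref{slice-coset}), the whole ray $tv$ lies in $Z_{x_1}$, so $x \in Z_{x_1}$; as points of a minimal slice have stabilizer contained in its centre (this is used in the proof of \cref{closed-under-slices}), we get $V_x \subseteq F_1$, and symmetrically $V_x \subseteq F_2$. Hence $V_x \subseteq F_1 \cap F_2$ while $v \in V_x$. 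Since a finite union of proper subspaces cannot exhaust $F_1 \cap F_2$, for generic $v$ we must have $V_x = F_1 \cap F_2$, so $F_1 \cap F_2 \in \LL(X)$. This establishes axiom (ii) for every linear $V$-variety.

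For axiom (iii), fix $F = V_x \in \LL(X)$ with $x$ distinguished. By \cref{prop:proper-slice} and \cref{closed-under-slices} the minimal slice $Z_x$ is a proper linear $V_x$-variety, and by \cref{orbits-and-stabs-of-GHZ} together with \cref{closure-of-stabilizer} its stabilizer collection is exactly the lower interval $\LL(Z_x) = \{F' \in \LL(X) : F' \subseteq F\}$ (the containment $\supseteq$ comes from the same limit-of-a-generic-vector argument as above, applied inside the proper $Z_x$). When $\dim F < d$, the inductive hypothesis applied to $Z_x$ identifies this interval as the lattice of flats of an essential arrangement in $F$, which is precisely axiom (iii); this completes \cref{stabs-are-flats-noncompact} once the case $F = V$ is handled. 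But if $V \in \LL(X)$ then the slice at the corresponding fixed point is itself a proper linear $V$-variety (\cref{prop:proper-slice}), so axiom (iii) at $F=V$ is subsumed by part \cref{stabs-are-flats-compact} for that slice, and the whole argument reduces to \cref{stabs-are-flats-compact}.

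It therefore remains to prove \cref{stabs-are-flats-compact}: for proper $X$, $\LL(X)$ is the lattice of flats of an essential arrangement in $V$. Since $V$ is connected solvable and $X$ is complete, the Borel fixed point theorem gives $V \in \LL(X)$, and the previous steps show $\LL(X)$ is closed under intersection with every proper lower interval geometric. What is left is exactly that the codimension-one flats $\A = \{H \in \LL(X) : \codim_V H = 1\}$ form an \emph{essential} arrangement whose lattice of flats is all of $\LL(X)$ — equivalently, $\bigcap_{H \in \A} H = \{0\}$ together with the atomistic property that each flat is an intersection of members of $\A$. I expect the essentiality statement to be the main obstacle: the slice induction controls each proper interval but degenerates precisely at the top, so essentiality cannot come from induction alone, and it is here that normality is indispensable (as the non-normal/pathological examples in the introduction make clear). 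The plan is a codimension argument: if $W := \bigcap_{H \in \A} H \neq \{0\}$, then a nontrivial common intersection forces every non-dense orbit to be small, so that $X$ becomes a normal proper compactification of $V$ whose boundary is too degenerate — concretely, a neighborhood of the orbit of $x_W$ splits as $V/W \times \P^1$ with complement of codimension at least two, and completing this to a proper normal $X$ would require contracting a complete subvariety of nonnegative self-intersection, which no normal variety admits. With essentiality established, the atomistic property follows from the interval structure, and $\LL(X)$ is the lattice of flats of the essential arrangement $\A$.
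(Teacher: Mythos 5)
Your reduction of part \cref{stabs-are-flats-noncompact} to part \cref{stabs-are-flats-compact} via slices is exactly the paper's route, and your closure-under-intersection argument (take a generic $v \in F_1 \cap F_2$, use \cref{slices-to-1psg} to see $v \in V_x$ for $x = \lim_{t\to\infty} tv$, bound $V_x \subseteq F_1 \cap F_2$ via the proper slices, and note that finitely many proper subspaces cannot cover $F_1 \cap F_2$) is a valid and arguably cleaner alternative to the paper's use of the Borel fixed point theorem on $\overline{V_x \cap V_y}$. The problem is the proper case itself, which is where all the remaining content lives and where your proposal does not close.

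Two things are missing. First, the atomistic property does not ``follow from the interval structure'': the interval structure controls $\{F' \in \LL(X) : F' \subseteq F\}$ for $F \subsetneq V$, i.e.\ flats \emph{below} a given one, whereas writing $F$ as an intersection of codimension-one flats requires producing flats \emph{above} $F$; the top interval is precisely the statement of \cref{stabs-are-flats-compact}, so invoking it is circular, as you half-acknowledge. Second, your proposed essentiality argument is not a proof: the slice at $x_W$ is a proper compactification of $W$, of dimension $\dim W$, not $\P^1$; and ``no normal variety admits contracting a complete subvariety of nonnegative self-intersection'' is not a theorem (self-intersection is not even defined in this generality, and e.g.\ fibers of $\P^1 \times \P^1$ have self-intersection zero). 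The paper handles both points at once with a single inductive step that your proposal lacks: if $\codim V_x \geq 2$, then $\overline{V \cdot x}$ contains \emph{two distinct} orbits of codimension one, whence $V_x = V_y \cap V_z$ with $\dim V_y = \dim V_z = \dim V_x + 1$ and induction finishes (essentiality is the case $V_x = \{0\}$). The existence of the second codimension-one orbit is the key geometric input: one takes a nonconstant regular function on a first codimension-one orbit $V \cdot y$ (which exists since $\dim V\cdot y > 0$), extends it over the neighborhood $V *_{V_y} Z_y$ by the universal property of the homogeneous fiber space (\cref{extending-functions}), and then uses the fact that a compactification of a connected variety with nonconstant global regular functions must have a codimension-one boundary component (\cref{codim-one-boundary}, proved by passing to normalizations and extending functions over codimension $\geq 2$). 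Without some replacement for this step, your proof of \cref{stabs-are-flats-compact} does not go through.
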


By combining \cref{orbit-flat} and \cref{prop:stabs-are-flats} we have
a natural one-to-one correspondence between the orbits of $X$ and the
flats of its relative hyperplane arrangement $\LL(X)$, as described in \cref{intro-analogy-orbit-cone}.

\begin{lem}\label{closed-under-intersection}
  If $X$ is a linear $V$-variety, then $\LL(X)$ is closed under 
  intersections.
\end{lem}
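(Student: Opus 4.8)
The plan is to exhibit the subspace $W := V_x \cap V_{x'}$ as a finite union of members of $\LL(X)$ that are each contained in $W$, and then to invoke the standard fact that a vector space over the infinite field $\C$ is never a finite union of proper subspaces. By \cref{orbit-flat} I may assume the two given stabilizers are those of distinguished points $x$ and $x'$, since every stabilizer is the stabilizer of the distinguished point in its orbit.

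The key ingredient I would establish first is the following claim: for a distinguished point $y$ and any $v \in V_y$, the limit $\ell(v) := \lim_{t \to \infty} tv$ exists and its stabilizer satisfies $V_{\ell(v)} \subseteq V_y$. To prove it, recall from \cref{closure-of-stabilizer} that the minimal slice $Z_y$ is the closure of $V_y$ in $U_y$ with $V \cap Z_y = V_y$; since $V_y$ is a subspace containing $v$, the whole one-parameter subgroup $t \mapsto tv$ lies in $V_y \subseteq Z_y$. Because $Z_y$ is proper by \cref{prop:proper-slice}, it is complete and hence closed in $X$, so the limit $z := \ell(v)$ exists and lies in $Z_y \subseteq U_y$. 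Finally, $z \in U_y$ gives $V \cdot y \subseteq \overline{V \cdot z}$, whence $V_z = V_{\ell(v)} \subseteq V_y$ exactly as in the proof of \cref{closed-under-slices}. I expect this to be the main obstacle, as it is precisely where properness of slices for vector groups (as opposed to the merely affine slices in the toric case) does the work, simultaneously guaranteeing existence of the limit and the stabilizer containment.

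Granting this, the conclusion is quick. For every $v \in W$ one has $v \in V_x$ and $v \in V_{x'}$, so applying the claim with $y = x$ and with $y = x'$ yields $V_{\ell(v)} \subseteq V_x$ and $V_{\ell(v)} \subseteq V_{x'}$, hence $V_{\ell(v)} \subseteq W$. Since $v$ fixes $\ell(v)$, we also have $v \in V_{\ell(v)}$, so each $v \in W$ lies in a stabilizer $V_{\ell(v)} \in \LL(X)$ contained in $W$. Therefore $W = \bigcup_{v \in W} V_{\ell(v)}$, and by \FO{} only finitely many stabilizers occur, so this is a finite union of subspaces of $W$ drawn from $\LL(X)$. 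If none of them equalled $W$, then $W$ would be covered by finitely many proper subspaces, which is impossible over $\C$. Hence $V_{\ell(v)} = W$ for some $v \in W$, exhibiting $W$ as a stabilizer, i.e. $W \in \LL(X)$.
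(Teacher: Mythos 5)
Your proof is correct, but it takes a genuinely different route from the paper's. The paper considers the closure $\overline{V_x \cap V_y}$ inside the minimal slice $Z_x$ (which contains $V_x$ by \cref{closure-of-stabilizer} and is proper by \cref{prop:proper-slice}) and applies the Borel fixed point theorem to the action of $V_x \cap V_y$ on this proper variety, producing in one step a point $z$ with $V_x \cap V_y \subseteq V_z$; the reverse inclusion $V_z \subseteq V_x \cap V_y$ is then obtained from $z \in U_x \cap U_y$ exactly as in your argument. You avoid Borel's theorem entirely: you cover $W = V_x \cap V_{x'}$ by the stabilizers $V_{\ell(v)}$ of the limit points $\ell(v) = \lim_{t\to\infty} tv$, each shown to contain $v$ and to be contained in $W$, and conclude with the fact that a vector space over the infinite field $\C$ is never a finite union of proper subspaces (finiteness of the union coming from \FO{}). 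Both arguments lean on the same engine, properness of the minimal slice --- yours to guarantee that the limits exist and land in $Z_y \subseteq U_y$, the paper's to invoke the fixed point theorem --- and your key claim is essentially the forward direction of \cref{slices-to-1psg} restricted to $v \in V_y$. The paper's version is shorter because Borel produces the witness directly; yours buys a concrete description of the intersection as the stabilizer of $\lim_{t\to\infty} tv$ for suitable $v \in W$, in the spirit of the orbit-flat correspondence of \cref{orbit-flat}.
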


\begin{proof}
Suppose that $x,y \in X$ are distinguished points, and consider the
action of $V_x \cap V_y$ on the closure $\overline{V_x \cap V_y}$ in
$X$. Let $Z_x$ be the minimal slice through $x$. Then $V_x \subseteq
Z_x$ by \cref{closure-of-stabilizer}, so $\overline{V_x \cap V_y}$ is
a closed subvariety of $Z_x$. Then by \cref{prop:proper-slice}, $Z_x$
is proper, so $\overline{V_x \cap V_y}$ is proper. By the Borel fixed
point theorem \cite[Chapter 21.2]{H75}, there exists a $(V_x \cap
V_y)$-fixed point $z \in \overline{V_x \cap V_y}$.  Thus $V_x \cap V_y
\subseteq V_z$.  To show the opposite inclusion, note that
    \[
    z \in \overline{V_x \cap V_y} \subseteq Z_x \cap Z_y \subseteq U_x 
    \cap U_y,
    \]
    where $U_x$ is the minimal invariant neighborhood.
    Therefore by definition of $U_x$, we have $x,y \in 
    \overline{V \cdot z}$, and thus $V_z \subseteq V_x \cap V_y$.
\end{proof}

To prove \cref{stabs-are-flats-compact}, it 
now suffices to prove that any stabilizer $V_x \subsetneq V$ 
is the intersection of the codimension one stabilizers 
containing it. For this we need the following lemmas.

\begin{lem}\label{extending-functions}
  Suppose that $G$ is a linear algebraic group acting on a variety $X$, 
  and $Z_x$ is a slice through $x \in X$. Then any regular function on 
  $G \cdot x$ extends to the neighborhood $G *_{G_x} Z_x \subseteq X$.
\end{lem}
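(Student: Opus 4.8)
The plan is to build the extension by pulling functions back along the canonical fibration $\tau \colon G *_{G_x} Z_x \to G/G_x$. Recall that the slice hypothesis identifies $G *_{G_x} Z_x$ with a $G$-stable open subset of $X$ containing the orbit, and that $x$ corresponds to $[e,x]$, so that $G \cdot x = \{[g,x] : g \in G\}$ inside the neighborhood.

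First I would check that $\tau$ restricts to an isomorphism on the orbit. Since $x$ is fixed by its stabilizer $G_x$, we have $h \cdot x = x$ for all $h \in G_x$, and hence $[g_1,x] = [g_2,x]$ if and only if $g_1 G_x = g_2 G_x$; thus $\tau([g,x]) = g G_x$ is injective on $G \cdot x$ and surjects onto $G/G_x$. Concretely, $\tau|_{G \cdot x}$ is the inverse of the orbit map $\phi \colon G/G_x \to G \cdot x$, $g G_x \mapsto g \cdot x$, which is an isomorphism of varieties since we work over $\C$.

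Given a regular function $f$ on $G \cdot x$, I would then transport it to the base by setting $\bar f := f \circ \phi$, which is regular because $\phi$ is a morphism, and define the extension $F := \tau^* \bar f = \bar f \circ \tau$ on all of $G *_{G_x} Z_x$. This $F$ is regular as the pullback of a regular function along the morphism $\tau$, and it restricts to $f$: for any $[g,x] \in G \cdot x$ one computes $F([g,x]) = \bar f(g G_x) = f(g \cdot x)$, which is $f$ under the identification above.

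There is no serious obstacle here: the entire content is the observation that $\tau$ retracts the neighborhood onto a copy of $G/G_x$ carrying $G \cdot x$ isomorphically onto the base, after which the extension is forced. The only points needing (routine) verification are that $\tau|_{G \cdot x}$ is genuinely an isomorphism — where being over $\C$ is used, via reducedness of orbits in characteristic zero — and that the identification of $G \cdot x$ inside the neighborhood with the orbit in $X$ is compatible with $\tau$; both follow directly from the definitions of the homogeneous fiber space and its canonical fibration.
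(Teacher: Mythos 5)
Your proof is correct and is essentially the paper's argument in a slightly unwound form: the paper simply applies the universal property of $G *_{G_x} Z_x$ to the map $(g,z) \mapsto f(g\cdot x)$, which produces exactly the function $F([g,z]) = f(g\cdot x)$ that you obtain by factoring through the canonical fibration $\tau$ and the base $G/G_x$. (Your detour through the isomorphism $G/G_x \cong G\cdot x$ is harmless but not needed --- regularity of $\bar f$ only requires the orbit map to be a morphism.)
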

\begin{proof}
  This follows from the universal property of $G *_{G_x} Z_x$ applied to 
  the map $G \times Z_x \to \C$ given by $(g,z) \mapsto f(g \cdot x)$ 
  where $f$ is a regular function on $G \cdot x$.
\end{proof}
\begin{lem}\label{codim-one-boundary}
  If $U$ is a connected algebraic variety which has nonconstant global
  regular functions, and $U \subseteq K$
  is a compactification, then the boundary $K \setminus U$
  has has an irreducible component of codimension one in $K$.
\end{lem}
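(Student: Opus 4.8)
The plan is to produce a codimension-one subvariety of the boundary $K \setminus U$ directly, by analyzing the polar divisor of a nonconstant function after normalizing. Fix a nonconstant $f \in H^0(U, \OO_X)$. Since $U$ is dense in the irreducible proper variety $K$, we may regard $f$ as a nonconstant rational function on $K$.

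First I would pass to the normalization $\nu \colon \tilde K \to K$, which is a finite surjective morphism with $\tilde K$ normal, irreducible, and proper (normalization is finite, finite morphisms are proper, and a composition of proper morphisms is proper). Pulling back gives a rational function $\tilde f = f \circ \nu$ on $\tilde K$, which is nonconstant because $\nu$ is dominant, and which is regular on the dense open set $\nu^{-1}(U)$ because $f$ is regular on $U$. The point of normalizing is that on the normal variety $\tilde K$ the polar divisor $(\tilde f)_\infty$ is a well-defined effective Weil divisor of pure codimension one, whose support is contained in $\tilde K \setminus \nu^{-1}(U) = \nu^{-1}(K \setminus U)$, since $\tilde f$ has no poles where it is regular.

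The crucial step is that this polar divisor is nonempty. If it were empty, then $\tilde f$ would have no poles in codimension one, hence would be regular on all of $\tilde K$ by algebraic Hartogs (using normality), and then constant because $\tilde K$ is proper and connected; this would contradict the fact that $\tilde f$ is nonconstant. So I may choose an irreducible component $E$ of $\Supp (\tilde f)_\infty$, which has codimension one in $\tilde K$ and satisfies $\nu(E) \subseteq K \setminus U$. Because $\nu$ is finite it preserves dimension, so $\dim \nu(E) = \dim E = \dim K - 1$. Any irreducible component of $K \setminus U$ containing $\nu(E)$ then has dimension at least $\dim K - 1$, and at most $\dim K - 1$ since $K \setminus U$ is a proper closed subset of the irreducible variety $K$; hence it has codimension exactly one, as desired.

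The main obstacle is that $K$ need not be normal, so the naive pole locus of $f$ on $K$ need not be purely codimension one; the role of the normalization is precisely to recover the clean divisor-theoretic picture while preserving codimensions (because $\nu$ is finite), after which properness of $\tilde K$ forces the polar divisor to be nonempty.
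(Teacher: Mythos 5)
Your proof is correct and rests on the same essential ingredients as the paper's: pass to the normalization (finite, hence codimension-preserving), apply algebraic Hartogs on the normal variety, and use properness to force global regular functions to be constant. The only difference is packaging — you argue directly by exhibiting a nonempty polar divisor of $\tilde f$ and pushing a component forward, whereas the paper assumes every boundary component has codimension at least two and derives a contradiction.
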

\begin{proof}
  Assume for a contradiction that every component of $K \setminus U$ has 
  codimension at least two. Consider the inclusion of the normalizations 
  $\tilde{U} \subseteq \tilde{K}$. Since $U$ has nonconstant global regular
  functions, then so must $\tilde{U}$. The normalization map is finite
  and therefore preserves the codimension of the boundary, so
  $\tilde{K}\setminus \tilde{U}$ is a closed set of codimension at
  least two. Thus any regular function on $\tilde{U}$ extends to
  $\tilde{K}$, so we get that the proper variety
  $\tilde{K}$ has nonconstant global regular functions,
  which is a contradiction.
\end{proof}

\begin{proof}[Proof of \cref{stabs-are-flats-compact}]
Suppose that $x \in X$ is a distinguished point with stabilizer $V_x$ 
  of codimension at least two in $V$. 
  We wish to show that $V_x$ is the intersection of the codimension one 
  stabilizers containing it, so by induction it is enough to find 
  $y,z \in X$ such that 
  \[
    V_x = V_y \cap V_z,\quad \dim V_y = \dim V_z = \dim V_x + 1.
  \]
  By \cref{orbit-flat}, $V \cdot y \not= V \cdot z$ 
  implies $V_y \not= V_z$. Therefore it suffices to show that the orbit 
  closure $\overline{V \cdot x}$ contains two distinct orbits of 
  codimension one in $\overline{V \cdot x}$. Suppose that $V \cdot y \subseteq 
  \overline{V \cdot x}$ is an orbit of codimension one in 
  $\overline{V \cdot x}$. Since $V_x \subseteq V$ is codimension at least 
  two, $\dim V \cdot y > 0$, and so we can choose a nonconstant regular function 
  $f$ on $V \cdot y$. By \cref{extending-functions}, $f$ extends to a 
  regular function on the minimal invariant neighborhood $U_y \supseteq 
  V \cdot x \cup V \cdot y$, which is nonconstant when restricted to $V 
  \cdot x \cup V \cdot y$. Therefore by 
  \cref{codim-one-boundary} with $U = V \cdot x \cup V \cdot y$ and $K = 
  \overline{V \cdot x}$, there is another orbit $V \cdot z \subseteq 
  \overline{V \cdot x}$ of codimension one.
\end{proof}
\begin{proof}[Proof of \cref{stabs-are-flats-noncompact}]
  We will apply 
\cref{stabs-are-flats-compact} to the slices of $X$. 
  We have that $\{0\} \in \LL(X)$, and by 
  \cref{closed-under-intersection}, $\LL(X)$ is closed under 
  intersections. It remains to show that for $F \in \LL(X)$, 
  $
    \{G \in \LL(X) : G \subseteq F\}
  $
  is the collection of flats of a partial 
  hyperplane arrangement in $F$. Suppose that $F$ is the stabilizer of 
  the distinguished point $x \in X$. Then the slice $Z_x$ is a proper 
  linear $F$-variety by \cref{closed-under-slices} and 
  \cref{prop:proper-slice}, so the set of stabilizer $\LL(Z_x)$ is the collection of flats of an 
  essential hyperplane arrangement in $F$ by \cref{stabs-are-flats-compact}. 
  Then by \cref{orbits-and-stabs-of-GHZ},
  $
    \LL(Z_x) = \{G \in \LL(X) : G \subseteq F\}.
  $
\end{proof}

\section{Proof of \cref{main-thm-compact}} \label{sec:structure-of-linear-compactifications}

  Let $X$ be an equivariant compactification of $V$, such that $X$ is 
  normal and satisfies \FO{} and \SL{}. We have shown in 
  \cref{slices-to-1psg} and \cref{1psg-to-slices} that this is 
  equivalent to assuming $X$ is normal and satisfies \FO{} and \OP{}. In 
  \cref{coord-interpretations} we show that matroid Schubert varieties 
  satisfy \FO{} and \SL{}, and normality of matroid Schubert varieties 
  follows from \cite[Theorem 1]{B03} together with \cite[Theorem 
  1.3(c)]{AB16}. Thus it only remains to show that $X$ is equivariantly 
  isomorphic to a matroid Schubert variety associated to a hyperplane 
  arrangement in $V$.
  
  By \cref{prop:stabs-are-flats}, there exists an essential hyperplane arrangement $\A = \{H_1,\ldots, H_n\}$
  in $V$ whose lattice of
  flats is the collection of stabilizers of $X$. 
  We write
  \[
  \Phi_{\A}:V \to V/H_1 \times \ldots \times V/H_n,
  \]
  for the induced linear embedding, and $Y_\A$ for the Schubert
  variety of $\A$.
  Our goal is to show that there exists an isomorphism
  \[
  T:X \to Y_{\A(X)}
  \]
  extending the isomorphism
  \[
  \Phi_{\A}:V \to \Phi_{\A}(V).
  \]
  For each hyperplane $H_i$, we denote by $x_i, Z_i,$ and $V *_{H_i} Z_i \subseteq X$ the corresponding distinguished point, slice, and minimal $V$-invariant open neighborhood, respectively. Explicitly, $Z_i$
  is the closure of $H_i$ in $X$, $x_i$ is the $H_i$-fixed point in
  $Z_i$, and $V *_{H_i} Z_i$ is embedded in $X$ as the union of all $V$-orbits in $X$ which
  intersect $Z_i$ (see \cref{compactifications-with-slices}). Because $Z_i$ is
  proper (\cref{prop:proper-slice}) and $X$ is separated, $Z_i \subseteq X$
  is closed. Recall from \cref{sec:slice-def} that $Z_i$ is the fiber of the trivial $V$-equivariant fibration
  \[
  \tau:V *_{H_i} Z_i \to V/H_i.
  \]
  Therefore $Z_i$ is a prime Cartier divisor in $X$, so there is an
  associated line bundle $\OO_X(Z_i)$. Fix a
  linearization of $\OO_X(Z_i)$, which exists by \cite[Theorem 7.2]{D03}. We then have a linear action
    \[
    V \curvearrowright H^0(X, \OO_X(Z_i)).
    \]
    Because $\tau$ is $V$-equivariant, the translations of $Z_i$ under the action of $V$ are the fibers of $\tau$, so letting $Z_i' \not= Z_i$ be any such translation, we have that $Z_i$ and $Z_i'$ are linearly equivalent and disjoint.
    Thus $\OO_X(Z_i)$ is globally generated, since the sections (up to
  scaling) corresponding to $Z_i$ and $Z_i'$ have no common
  zeros. So far, we have that $Z_i$ defines a $V$-equivariant morphism
    \[
    T_i:X \to \P(H^0(X, \OO_X(Z_i))^\vee).
    \]
Finally, we have that the target of $T_i$ is $\P^1$ from a general observation:
    \begin{lem}
      Suppose $X$ is a proper normal variety, and $Z$ and $Z'$ are
      prime Cartier divisors which are linearly equivalent and such
      that $Z \cap Z' = \emptyset$. Then the space of global sections
      of $\OO_X(Z)$ is two dimensional.
    \end{lem}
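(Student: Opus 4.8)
The plan is to realize $\OO_X(Z)$ as the pullback of $\OO_{\P^1}(1)$ under a morphism $\phi\colon X \to \P^1$ built from $Z$ and $Z'$, and then to show via Stein factorization that $\phi$ has connected fibers, so that $\phi_*\OO_X = \OO_{\P^1}$ and the global sections of $\OO_X(Z)$ agree with those of $\OO_{\P^1}(1)$.

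First I would set $L = \OO_X(Z)$ and let $s_0 \in H^0(X,L)$ be the canonical section with $\Div(s_0) = Z$. Since $Z \sim Z'$, there is a second section $s_1$ with $\Div(s_1) = Z'$; as $Z \neq Z'$ these are linearly independent, so $\dim H^0(X,L) \geq 2$. The hypothesis $Z \cap Z' = \emptyset$ says $s_0$ and $s_1$ have no common zero, so $L$ is globally generated by the pencil they span, and $\phi = [s_0 : s_1]\colon X \to \P^1$ is a morphism with $L = \phi^*\OO_{\P^1}(1)$ and $s_i = \phi^* x_i$ for the coordinate sections $x_0,x_1$. Because $Z \neq Z'$, the map $\phi$ is nonconstant, and since $X$ is proper it is surjective.

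Next I would take the Stein factorization $\phi = g \circ h$, where $h\colon X \to C$ has connected fibers with $h_*\OO_X = \OO_C$ and $g\colon C \to \P^1$ is finite; as $X$ is normal, $C$ is a smooth projective curve, and $h$ is surjective. The key step is to prove $\deg g = 1$. Set-theoretically $\phi^{-1}([0:1])$ is the zero locus of $s_0$, namely the prime divisor $Z$; since $h$ has connected fibers, the irreducibility of $Z$ forces $g^{-1}([0:1])$ to be a single point $p_0$ (two distinct preimages would split $Z$ into disjoint nonempty closed pieces). Then $\Div(s_0) = h^*\!\big(g^*([0:1])\big) = (\deg g)\,m\,Z$ for some $m \geq 1$, where $m$ is the multiplicity of $Z$ in the nonzero effective divisor $h^*(p_0)$ (nonzero because $C$ is a curve, so every fiber of the surjection $h$ has codimension one, and supported on $Z$ because $h^{-1}(p_0) = Z$). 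But $\Div(s_0) = Z$ is reduced, so $\deg g = m = 1$. Hence $g$ is a finite birational morphism of smooth curves, i.e.\ an isomorphism, $C \cong \P^1$, and $\phi_*\OO_X = g_*\OO_C = \OO_{\P^1}$.

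Finally, the projection formula gives
\[
H^0(X,L) = H^0\big(X, \phi^*\OO_{\P^1}(1)\big) = H^0\big(\P^1, \OO_{\P^1}(1) \otimes \phi_*\OO_X\big) = H^0\big(\P^1, \OO_{\P^1}(1)\big),
\]
which is two dimensional. I expect the main obstacle to be the reducedness/multiplicity bookkeeping in the Stein factorization step: one must rule out the possibility that $\phi$ is totally ramified over $[0:1]$ with higher degree, and it is precisely the fact that $Z = \Div(s_0)$ appears with multiplicity one that collapses $\deg g$ to $1$. Care is also needed to confirm that $h^*(p_0)$ is a nonzero effective divisor supported exactly on $Z$, which is what licenses writing $h^*(p_0) = mZ$.
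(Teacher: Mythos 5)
Your proof is correct, but it takes a genuinely different route from the paper. The paper restricts $\OO_X(Z)$ to $Z$: it uses the exact sequence $0 \to \OO_X \to \OO_X(Z) \to \OO_X(Z)\otimes i_*\OO_Z \to 0$, observes that $\OO_X(Z)\vert_Z$ is trivial because $Z$ moves to the disjoint divisor $Z'$, and then reads off $\dim H^0(X,\OO_X(Z)) \leq h^0(\OO_X) + h^0(\OO_Z) = 2$ from the long exact sequence, with equality coming from the two independent sections. You instead prove that the pencil spanned by $s_0,s_1$ is the \emph{entire} linear system, by running the morphism $\phi = [s_0:s_1]$ through Stein factorization and using the irreducibility and reducedness of $Z = \Div(s_0)$ to force $\deg g = 1$, hence $\phi_*\OO_X = \OO_{\P^1}$ and $H^0(X,\OO_X(Z)) \cong H^0(\P^1,\OO_{\P^1}(1))$. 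Your multiplicity bookkeeping ($\Div(s_0) = (\deg g)\, m\, Z$ with $(\deg g)m = 1$) is the right way to close the argument, and the connectedness step ($g^{-1}([0:1])$ is one point because $Z$ is connected and $h$ is surjective with connected fibers) is sound. The paper's argument is shorter and needs only properness and integrality of $X$ and $Z$ (normality enters only in identifying effective divisors in $|Z|$ with sections); your argument is heavier machinery but yields strictly more information, namely that $|Z|$ defines a fibration $X \to \P^1$ with connected fibers whose fibers are the translates of $Z$ --- which is in fact how the surrounding proof of Theorem~A uses this lemma.
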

    \begin{proof}
      Let $i:Z \to X$ denote the inclusion, and consider the short exact sequence
      \begin{center}
        \begin{tikzcd}
0 \arrow[r] & \mathcal{O}_X \arrow[r] & \mathcal{O}_X(Z) \arrow[r] & \mathcal{O}_X(Z) \otimes i_*\mathcal{O}_{Z} \arrow[r] & 0.
        \end{tikzcd}
      \end{center}
      By the projection formula, the sheaf on the right is isomorphic to $i_*(i^*\OO_X(Z) \otimes \OO_Z)$. However the restriction of $\OO_X(Z)$ to $Z$ is trivial because $Z$ can be moved to the disjoint divisor $Z'$. Thus 
    $
    \OO_X(Z) \otimes i_*\OO_{Z} \cong i_* \OO_Z.
    $
    Now take the long exact sequence in cohomology.
    \begin{center}
\begin{tikzcd}
0 \arrow[r] & {H^0(X, \mathcal{O}_X)} \arrow[r] & {H^0(X, \mathcal{O}_X(Z))} \arrow[r] & {H^0(X, i_*\mathcal{O}_Z)} \arrow[r] & \ldots
\end{tikzcd}
      \end{center}
    Since $X$ and $Z$ are proper and irreducible, $\dim H^0(X, \OO_X) =
    \dim H^0(X, i_*\OO_Z) = 1$. Therefore $\dim H^0(X, \OO_X(Z)) \leq
    2$. We also have that the sections (up to scaling) corresponding to $Z$ and $Z'$
    are independent, so $\dim H^0(X, \OO_X(Z)) = 2$.
    \end{proof}
    Let us choose coordinates on the target of $T_i$:
    \[
    s_0,s_1 \in H^0(X, \OO_X(Z_i)),\quad \Div(s_0) = Z_i,\quad s_1
    \not=0\ \text{is $V$-fixed}.
    \]
    The section $s_1$ exists because $V$ is unipotent. For any
    isomorphism between $\OO_X(Z_i) \vert_{V}$ and $\OO_V$, we have that
    $s_0\vert_V$ is sent to a linear form vanishing on $H_i$, and $s_1
    \vert_V$ is sent to a constant since $s_0$ is $V$-fixed. Thus
    there is a commutative square
    \begin{center}
        \begin{tikzcd}
        X \arrow[r, "T_i"]        & \mathbb{P}(V/H_i \oplus \C)          \\
        V \arrow[r] \arrow[u, hook] & V/H_i \arrow[u, hook]
        \end{tikzcd}
    \end{center}
    where the right vertical arrow is the embedding
    \[
    V/H_i \to \P^1,\quad v \mapsto [s_0(v):s_1(v)].
    \]
    From this it follows
    that the product map $X \to \prod_{i = 1}^n \P(V/H_i \oplus \C)$
    extends $\Phi_\A$, and thus
    we can define a morphism
    \[
    T:X \to Y_{\A(X)}, \quad T := (T_1,\ldots,T_n).
    \]
    Since $T$ is birational, by \cref{thm:open-criterion}
    we can show that $T$ is an isomorphism by showing that it is
    bijective on closed points. Since $T$ extends $\Phi_{\A}$, it is a 
    morphism of linear $V$-varieties. The set of stabilizers 
    of $X$ is the lattice of flats of $\A$ by assumption, and one can prove in coordinates that the set
    of stabilizers of $Y_\A$ is the image under $\Phi_\A$ of the
    lattice of
    flats of $\A$ (see \cref{coord-stab}). Thus $T$ carries the 
    set of stabilizers of $X$ bijectively onto the set of 
    stabilizers of $Y_\A$. Furthermore, $T$ carries the distinguished 
    points of $X$ bijectively onto the distinguished points of $Y_\A$
    by \cref{orbit-flat}.
    Let $x \in X$ be a distinguished point with stabilizer $V_x$, and let $T(x) \in Y_\A$ be the corresponding distinguished point with stabilizer $\Phi_\A(V_x)$. We have the
    following commutative square relating the orbit $V\cdot x$ in $X$ and
    the corresponding orbit $\Phi_{\A}(V) \cdot T(x)$ in $Y_{\A}$.
    \begin{center}

      \begin{tikzcd}
V\cdot x \arrow[r, "T"] \arrow[d, "\cong"] & \Phi_{\A}(V)\cdot T(x) \arrow[d, "\cong"]               \\
V/V_x \arrow[r] & \Phi_{\A}(V)/\Phi_{\A}(V_x)
\end{tikzcd}
      \end{center}
    Since the bottom arrow is an isomorphism, it follows that the top arrow is an
    isomorphism, and we take the disjoint union over all distinguished points to
    obtain that $T$ is a bijection on closed points.

    This completes the proof of \cref{main-thm-compact}.

\section{Proof of \cref{main-thm-noncompact}} \label{sec:structure-of-linear-V-varieties}

\subsection{Overview} In this section $V = \C^d$ will denote a vector group, and we will use
the term \emph{linear $V$-variety} to mean a normal equivariant
partial compactification $V \subseteq X$, which has finitely many
orbits and a slice through every point. We have proved in
\cref{slices-and-1psg} that
\cref{main-thm-noncompact-slices} and \cref{main-thm-noncompact-1psg}
in \cref{main-thm-noncompact} are equivalent. To prove \cref{main-thm-noncompact}, it remains to
show the following.

\begin{thm}
Given a linear $V$-variety $X$, let $\LL(X)$ denote the associated 
  partial hyperplane arrangement of \cref{def:L-of-X}.
\begin{thmenum}[leftmargin=*]
  \item \label{functoriality} \textbf{Functoriality:} If $X_i$ is a linear $V_i$-variety for $i = 1,2$ and $T:X_1 \to X_2$ is a morphism, then the restricted linear map $T:V_1 \to V_2$ is a morphism of partial hyperplane arrangements $\LL(X_1) \to \LL(X_2)$.
  \item \label{full-faithfulness} \textbf{Full faithfulness:} If $X_i$ 
    is a linear $V_i$-variety for $i = 1,2$ and $T:V_1 \to V_2$ is a 
    morphism of partial hyperplane arrangements $\LL(X_1) \to \LL(X_2)$, 
    then $T$ extends uniquely to a morphism of linear $V$-varieties $X_1 
    \to X_2$.
    \item \label{essential-surjectivity} \textbf{Essential surjectivity:} If $\LL$ is an essential hyperplane arrangement in $V$, then there exists a linear $V$-variety $X$ such that $\LL(X) \cong \LL$.
\end{thmenum}
\end{thm}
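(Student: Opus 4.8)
The plan is to dispatch the three parts in order of difficulty, leaving the existence half of full faithfulness for last. Throughout I would use the orbit--flat dictionary of \cref{orbit-flat}, identifying flats of $\LL(X)$ with stabilizers $V_x$ of distinguished points, together with the limit criterion $v \in V_x^\circ \iff \lim_{t \to \infty} tv = x$ of \cref{slices-to-1psg}. Essential surjectivity (\cref{essential-surjectivity}) is then immediate: for an essential arrangement $\A$ the matroid Schubert variety $Y_\A$ is a proper linear $V$-variety by the verification in \cref{sec:structure-of-linear-compactifications}, and $\LL(Y_\A)$ is the lattice of flats of $\A$ by the coordinate computation of \cref{coord-stab}. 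The uniqueness half of full faithfulness is equally quick: $V_1$ is dense in $X_1$ and $X_2$ is separated, so any two extensions of a fixed linear $T \colon V_1 \to V_2$ coincide.

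For functoriality (\cref{functoriality}), the first axiom of \cref{morphism-of-arrangements} is just equivariance: if $F_1 = V_x$ then $v \cdot x = x$ forces $T(v) \cdot T(x) = T(x)$, so $T(F_1) \subseteq V_{T(x)}$. The second axiom carries the content. Fix $F_1 = V_x$ and $F_2 \in \LL(X_2)$, and put $W = T^{-1}(F_2) \cap F_1$. Let $G$ be the smallest flat of $\LL(X_1)$ containing $W$, which exists because $\LL(X_1)$ is intersection-closed (\cref{closed-under-intersection}); clearly $W \subseteq G \subseteq F_1$. For the reverse inclusion pick a generic $v \in W$: as there are only finitely many flats, generic $v$ avoids every flat meeting $W$ in a proper subspace, so the only flats through $v$ are those containing $W$, and the smallest is $G$; thus $\lim_{t \to \infty} tv = w$ with $V_w = G$. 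Since a morphism of varieties commutes with one-parameter limits, $T(w) = \lim_{t \to \infty} t\, T(v)$, and as $T(v) \in F_2$, \cref{slices-to-1psg} applied in $X_2$ gives $V_{T(w)} \subseteq F_2$. Equivariance now yields $T(G) = T(V_w) \subseteq V_{T(w)} \subseteq F_2$, so $G \subseteq T^{-1}(F_2) \cap F_1 = W$ and hence $W = G \in \LL(X_1)$.

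It remains to extend a morphism of partial hyperplane arrangements $T \colon \LL(X_1) \to \LL(X_2)$ to a morphism $\bar T \colon X_1 \to X_2$. I would first reduce to $X_1$ simple: the minimal neighborhoods $U_x = V_1 *_{V_x} Z_x$ cover $X_1$, all contain the dense orbit $V_1$, and extensions defined on them agree on overlaps by the uniqueness argument above, so it suffices to extend $T$ over each simple $U_x$. Given such an $x$, let $F_2$ be the smallest flat of $\LL(X_2)$ containing $T(V_x)$ and $y$ the corresponding distinguished point; since every orbit of $U_x$ has stabilizer inside $V_x$, its image has stabilizer inside $F_2$, so $\bar T$ must send $U_x$ into $U_y = V_2 *_{F_2} Z_y$. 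By \cref{prop:proper-slice} and \cref{main-thm-compact} the slices $Z_x$ and $Z_y$ are matroid Schubert varieties of essential arrangements in $V_x$ and $F_2$. By the universal property of the homogeneous fiber space it is then enough to construct a $T|_{V_x}$-equivariant extension $\psi \colon Z_x \to Z_y$ of the linear map $T|_{V_x} \colon V_x \to F_2$, for then $[v, z] \mapsto [T(v), \psi(z)]$ is a well-defined morphism $U_x \to U_y$.

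This is the heart of the matter. The two axioms of \cref{morphism-of-arrangements} restrict, so $T|_{V_x}$ is itself a morphism of partial hyperplane arrangements from the flats below $V_x$ to those below $F_2$; it therefore suffices to prove the following concrete claim about matroid Schubert varieties: if $S \colon F \to F'$ is a morphism of essential hyperplane arrangements $\A \to \A'$, then $S$ extends to a morphism $Y_\A \to Y_{\A'}$. I would prove this coordinate-wise through the embedding $Y_{\A'} \hookrightarrow \prod_{H'}(F'/H' \cup \{\infty\})$: for each hyperplane $H' \in \A'$ the composite $\ell_{H'} \colon F \xrightarrow{S} F' \to F'/H' \cong \C$ is a linear functional whose kernel $S^{-1}(H')$ is a flat of $\A$ by the second axiom. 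Being of codimension at most one, this kernel is either all of $F$, so that $\ell_{H'}$ is the zero functional extending as a constant, or a hyperplane of $\A$, so that $\ell_{H'}$ is a rescaling of a coordinate function of $Y_\A$; either way $\ell_{H'}$ extends to a morphism $Y_\A \to \P^1$. The product of these extensions lands in $Y_{\A'}$ by properness and is $S$-equivariant because each coordinate is. The main obstacle is exactly this extension lemma: the upstream reductions are formal bookkeeping with the orbit--flat correspondence, but the passage from a merely rational extension to a genuine morphism, with no indeterminacy along the boundary, rests entirely on the second axiom forcing every relevant kernel to be a single coordinate hyperplane after restriction.
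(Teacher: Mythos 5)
Your treatments of \cref{functoriality} and \cref{full-faithfulness} are sound, and the latter is essentially the paper's own argument: reduce to the open cover by the sets $V_1 *_{F_1} Z_1$, invoke \cref{prop:proper-slice} and \cref{main-thm-compact} to identify the slices with matroid Schubert varieties, and prove the compact extension lemma coordinate-wise exactly as in \cref{full-faithfulness-compact} (the paper does not bother with the \emph{smallest} flat $F_2$ containing $T(F_1)$; any flat containing it works). Your proof of functoriality, however, is genuinely different from the paper's. The paper verifies axiom (ii) of \cref{morphism-of-arrangements} by letting $T^{-1}(F_2)\cap F_1$ act on the proper slice $Z_1$ and producing a fixed point via the Borel fixed point theorem; you instead take the smallest flat $G\supseteq W:=T^{-1}(F_2)\cap F_1$, choose $v\in W$ generic, and push the limit $\lim_{t\to\infty}tv$ through $T$. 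Your route trades the fixed-point theorem for the orbit--flat dictionary and is correct, with one small step worth making explicit: from $T(v)\in V_{T(w)}^{\circ}\cap F_2$ you conclude $V_{T(w)}\subseteq F_2$, which requires noting that $V_{T(w)}\cap F_2$ is again a flat (\cref{closed-under-intersection}) containing $T(v)$ and contained in $V_{T(w)}$, hence equal to it.

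The genuine gap is in \cref{essential-surjectivity}. The word ``essential'' in the printed statement is evidently a slip for ``partial'': the functor of \cref{main-thm-noncompact} must be essentially surjective onto \emph{all} linear $V$-varieties, including non-proper ones whose arrangements of stabilizers are partial hyperplane arrangements that need not be lattices of flats of any single essential arrangement (the paper gives such an example in $\C^3$). Your one-line dispatch via $Y_{\A}$ only realizes the proper case and therefore omits the main content of this part of the proof: for a general partial hyperplane arrangement $\LL$ one must build $Y_{\LL}$ by gluing the simple pieces $V *_{F} Y_{\A_F}$ along the open embeddings $V *_{F'} Y_{\A_{F'}} \subseteq V *_{F} Y_{\A_F}$ indexed by inclusions $F'\subseteq F$ in $\LL$, and then verify the cocycle condition, separatedness (via properness of the canonical fibrations $\tau_F$), that the result is a linear $V$-variety, and that its stabilizers recover $\LL$. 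None of this is addressed in your proposal, and it cannot be recovered from the compact case alone.
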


The proof of essential surjectivity describes how to construct the
linear $V$-variety associated to a partial hyperplane arrangement.

\subsection{Morphisms}
In this section we prove functoriality and full faithfulness. To prove 
functoriality, we generalize the proof of 
\cref{closed-under-intersection}, appealing again to the Borel fixed 
point theorem.
\begin{proof}[Proof of \cref{functoriality}]
  Since
  the stabilizer of $x \in X_1$ is mapped into the stabilizer of $T(x)
  \in X_2$, it follows that for each flat of $\LL(X_1)$ is mapped into a 
  flat of $\LL(X_2)$, as required in \cref{flats-to-flats} of \cref{morphism-of-arrangements}.
  If $F_1 \in \LL(X_1)$ and $F_2 \in \LL(X_2)$, it remains to show
  that $T^{-1}(F_2) \cap F_1 \in \LL(X_1)$. Write $Z_1$ for the minimal
  slices through the distinguished point $x_1 \in X_1$ corresponding to 
  $F_1$. Then $Z_1$ is proper by \cref{prop:proper-slice}, 
  so $T^{-1}(F_2) \cap Z_1$ is proper.
  By the Borel fixed point theorem, there exists $z \in
  T^{-1}(x_2) \cap Z_1$ such that 
  \[
    T^{-1}(F_2) \cap F_1 \subseteq (V_1)_{z}.
  \]
  We now show the opposite inclusion.  By \cref{prop:proper-slice} there 
  is a unique $F_1$-fixed point in 
  $Z_1$, and $\overline{F_1 \cdot z}$ contains a $F_1$-fixed point, so 
  $x_1 \in \overline{F_1 \cdot z}$. Therefore $(V_1)_z \subseteq F_1$. 
  On the other hand, $z \in T^{-1}(x_2)$, so $(V_1)_z \subseteq 
  T^{-1}(F_2)$.
\end{proof}

Let us start by proving full faithfulness in the compact case.
\begin{lem}\label{full-faithfulness-compact}
  Suppose that $\A_i$ is an essential hyperplane arrangement in the 
  vector group $V_i$ for $i=1,2$, and $T:V_1 \to V_2$ is a linear map 
  such that the preimage of each hyperplane in $\A_2$ is either a hyperplane in $\A_1$ or is $V_1$.
  Then $T$ extends to a morphism between matroid Schubert varieties $Y_{\A_1} \to Y_{\A_2}$.
\end{lem}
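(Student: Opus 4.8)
The plan is to realize the morphism explicitly in coordinates, using the fact that a matroid Schubert variety $Y_{\A}$ is by construction the closure of $V$ inside a product of $\P^1$'s, one factor $\P(V/H \oplus \C)$ for each hyperplane $H \in \A$. First I would set up the embeddings $\Phi_{\A_i}:V_i \hookrightarrow \prod_{H \in \A_i} \P(V_i/H \oplus \C)$, so that $Y_{\A_i}$ is the closure of $\Phi_{\A_i}(V_i)$. The key idea is to build the extension factor by factor: for each hyperplane $H_2 \in \A_2$, the hypothesis says $T^{-1}(H_2)$ is either a hyperplane $H_1 \in \A_1$ or all of $V_1$. In the first case $T$ induces a linear map $V_1/H_1 \to V_2/H_2$, which extends to a map of the projective closures $\P(V_1/H_1 \oplus \C) \to \P(V_2/H_2 \oplus \C)$ sending $[v:t]\mapsto [\bar T(v):t]$ (using the induced linear isomorphism-onto-image on the affine part and fixing the point at infinity). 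In the second case $T$ is constant modulo $H_2$, so the composite $V_1 \to V_2/H_2$ is constant and the corresponding coordinate extends to the constant map landing in the affine part.

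Next I would assemble these factorwise maps into a single morphism
\[
\widetilde{T}:\prod_{H_1 \in \A_1}\P(V_1/H_1 \oplus \C) \to \prod_{H_2 \in \A_2} \P(V_2/H_2 \oplus \C),
\]
where the factor indexed by $H_2$ is the map constructed above (reading off which $H_1 \in \A_1$, if any, equals $T^{-1}(H_2)$). By construction $\widetilde{T}$ is a morphism of the ambient products and the diagram
\[
\begin{tikzcd}
\prod_{H_1}\P(V_1/H_1 \oplus \C) \arrow[r, "\widetilde T"] & \prod_{H_2}\P(V_2/H_2 \oplus \C)\\
V_1 \arrow[u, hook, "\Phi_{\A_1}"] \arrow[r, "T"] & V_2 \arrow[u, hook, "\Phi_{\A_2}"]
\end{tikzcd}
\]
commutes, because on each factor it reduces to the defining formula of $\Phi_{\A_2}$ precomposed with $T$. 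The crucial point is then that $\widetilde T$ restricts to a morphism $Y_{\A_1} \to Y_{\A_2}$: since $Y_{\A_1} = \overline{\Phi_{\A_1}(V_1)}$ and $\widetilde T$ is continuous (a morphism of varieties), it carries $\overline{\Phi_{\A_1}(V_1)}$ into $\overline{\widetilde T(\Phi_{\A_1}(V_1))} = \overline{\Phi_{\A_2}(T(V_1))} \subseteq \overline{\Phi_{\A_2}(V_2)} = Y_{\A_2}$. Thus $T$ extends to a map $Y_{\A_1} \to Y_{\A_2}$ restricting to the linear map $T$ on the open vector groups, which is precisely a morphism of equivariant compactifications.

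The main obstacle I anticipate is bookkeeping rather than conceptual difficulty: one must check that the factorwise maps are genuinely morphisms of $\P^1$ (in particular that they are defined everywhere, including on the boundary points at infinity), and that the hypothesis $T^{-1}(H_2) \in \{H_1, V_1\}$ is exactly what guarantees each coordinate of the composite extends without introducing a pole. Concretely, a linear form $f_2 \in V_2^\vee$ cutting out $H_2$ pulls back to $f_2 \circ T \in V_1^\vee$, and the dichotomy in the hypothesis says $f_2 \circ T$ is either a (nonzero) form cutting out a hyperplane of $\A_1$ or is identically zero; this is what lets me match each target factor either to a source factor or to a constant, and it is the sole place the morphism-of-arrangements condition enters. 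Once the factorwise extensions are verified on all boundary charts, the closure argument is formal and requires no separate normality or openness input, since I only need that a morphism of varieties maps a closure into a closure.
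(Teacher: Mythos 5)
Your proposal is correct and follows essentially the same route as the paper's proof: extend $T$ factor by factor over the hyperplanes of $\A_2$, using the dichotomy $T^{-1}(H_2)\in\A_1$ or $T^{-1}(H_2)=V_1$ to define each coordinate map $\P(V_1/T^{-1}(H_2)\oplus\C)\to\P(V_2/H_2\oplus\C)$ or a constant map, and then conclude by the closure argument. The paper leaves the closure step implicit ("we are reduced to showing..."), but the content is identical.
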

\begin{proof}
  Since $Y_{\A_i}$ is the closure of $V_i \subseteq \prod_{H \in \A_i} 
  \P(V_i/H \oplus \C)$, we are reduced to showing that $T$ extends to a map 
  \[
  \prod_{H_1 \in \A_1}\P(V_1/H_1 \oplus \C) \to \prod_{H_2 \in \A_2} 
    \P(V_2/H_2 \oplus \C).
  \]
  Fix $H_2 \in \A_2$. If $T^{-1}(H_2) = V_1$, then the $H_2$ 
  component of the displayed function can be defined to be constant with 
  value $0 \in V_2/H_2$. Otherwise $T^{-1}(H_2) \in \A_1$, in which case the $H_2$ 
  component of the displayed function can be defined as projection onto 
  $\P(V_1/T^{-1}(H_2) \oplus \C)$ followed by the map
  \[
[v + T^{-1}(H_2): z] \mapsto [T(v) + H_2: z].
\]
\end{proof}

To prove full faithfulness in general, we apply 
\cref{full-faithfulness-compact} to the slices in a linear $V$-variety, 
which are matroid Schubert varieties by 
\cref{prop:proper-slice} and \cref{main-thm-compact}.
\begin{proof}[Proof of \cref{full-faithfulness}]
  Since $V_1 \subseteq X_1$ is dense, uniqueness follows immediately. We 
  now argue that an extension $T:X_1 \to X_2$ exists.
  
  Suppose that $F_i \in \LL(X_i)$ for $i=1,2$ such that $T(F_1) \subseteq F_2$, and 
  denote by $Z_i \subseteq X_i$ the slices through the corresponding 
  distinguished points. By \cref{prop:proper-slice},
  \cref{closed-under-slices}, and \cref{main-thm-compact}, $Z_i$ is
  the matroid Schubert variety associated to a hyperplane arrangement in 
  $F_i$. By
  \cref{orbits-and-stabs-of-GHZ}, the hyperplane arrangement corresponding to $Z_i$ is given by those flats of $\LL(X_i)$ which are contained in $F_i$.
  Therefore, since $T$ is a morphism of partial hyperplane arrangements,
  the hypotheses of \cref{full-faithfulness-compact} are satisfied for the restriction $T\vert_{F_1}:F_1 \to F_2$.
  Thus $T\vert_{F_1}$ extends to a morphism $\overline{T \vert_{F_1}}:Z_1 \to Z_2$.

  We can now extend $T$ to the open set $V_1 *_{F_1} Z_1 \subseteq X_1$ by sending $[v,z]$ to $[T(v), \overline{T \vert_{F_1}}(z)] \in V_2 *_{F_2}Z_2.$
  As $F_1$ and $F_2$ vary, the open sets $V_1 *_{F_1} 
  Z_1$ cover $X_1$ (here we are using \cref{flats-to-flats} in 
  \cref{morphism-of-arrangements}), and the extensions of $T$ to $V_1 *_{F_1} Z_1$ are unique and thus compatible on 
  intersections, so they define a global extension $X_1 \to X_2$.
\end{proof}

\subsection{Construction of linear $V$-varieties}
Now we turn to essential surjectivity. Let $\LL$ be a partial hyperplane arrangement in $V$. 

\subsubsection{A diagram of hyperplane arrangements}\label{A diagram of 
hyperplane arrangements} From
\cref{def:partial-hyperplane-arrangement} we have that for every $F
\in \LL$, there is an essential hyperplane arrangement $\A_F$ in the
vector space $F$ whose lattice of flats is $\{F' \in \LL : F' \subseteq
F\}$. It follows immediately that $\A_F$ is unique. If $F' \subseteq F$
are elements of $\LL$, then $\A_{F'}$ is called the \emph{restriction} of
$\A_F$ to the flat $F'$.

\subsubsection{A diagram of matroid Schubert
  varieties} \label{A diagram of matroid Schubert varieties}
For each $F \in \LL$, we have the matroid Schubert variety $Y_{\A_F}$
associated to the hyperplane arrangement $\A_F$. If $F'\in \LL$ is
contained in $F$, then $F'$ is a flat of $\A_F$. Therefore $Y_{\A_{F'}}$
is the slice through a distinguished point $x_F' \in Y_{\A_F}$ by the
coordinate formula given in \cref{coord-slice-dist-pt}. So $\LL$
indexes a diagram of matroid Schubert varieties,
where each arrow is the inclusion of a slice.

\subsubsection{A diagram of open embeddings}\label{A diagram of open embeddings}
Given $F' \subseteq F$ elements of $\LL$, we have an open embedding $F 
*_{F'} Y_{\A_{F'}} \subseteq Y_{\A_F}$ because $Y_{\A_{F'}}$ is a slice through 
$x_{F'}$. It is straight forward to check that $V *_F -$ preserves open 
embeddings, so by the associativity property of \cref{associativity}
we have an open embedding
\[
  V *_G Y_{\A_G} \cong V *_F (F *_G Y_{\A_G}) \subseteq V *_F Y_{\A_F}. 
\]
Therefore $\LL$ 
indexes a diagram of open embeddings between the varieties $V *_F 
Y_{\A_F}$ for $F \in \LL$. By \cref{orbits-and-stabs-of-GHZ}, $V *_F Y_{\A_F}$ 
has finitely many $V$-orbits, and again by the associativity property of
\cref{associativity} each point
$[v,y] \in V *_F Y_{\A_F}$ has a slice $V *_F Z_y$, where $Z_y$ is a 
slice through $y \in Y_{\A_F}$. Thus $V *_F Y_{\A}$ is a linear 
$V$-variety.

\subsubsection{Cocycle condition}
To verify that the $V *_F Y_{\A_F}$ glue together, we must check the cocycle condition \cite[Exercise II.2.12]{H77}. This reduces to the following lemma.

\begin{lem}
  If $F',F'' \subseteq F$ are elements of $\LL$, then
  \[
    V *_{F'} Y_{\A_{F'}} \cap V *_{F''} Y_{\A_{F''}} = V *_{F' \cap F''} 
    Y_{\A_{F' \cap F''}}
  \]
  considered as open sets in $V *_F Y_{\A_F}$.
\end{lem}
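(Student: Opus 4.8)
The plan is to compare the three open sets orbit by orbit. Recall that in the linear $V$-variety $V *_F Y_{\A_F}$ the orbits form a finite stratification, so every $V$-invariant open subset is a union of orbits, and hence is determined by the set of orbits it contains. Since all three of $V *_{F'} Y_{\A_{F'}}$, $V *_{F''} Y_{\A_{F''}}$, and $V *_{F' \cap F''} Y_{\A_{F' \cap F''}}$ are $V$-invariant open subsets of $V *_F Y_{\A_F}$ (note $F' \cap F'' \in \LL$ by \cref{def:partial-hyperplane-arrangement} and $F' \cap F'' \subseteq F$, so the third is defined), it suffices to show that the orbits contained in the intersection of the first two coincide with the orbits contained in the third.

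The key step is an orbit description of each open set $V *_G Y_{\A_G} \subseteq V *_F Y_{\A_F}$, for $G \in \LL$ with $G \subseteq F$. By the orbit–stabilizer correspondence \cref{orbits-and-stabs-of-GHZ}, the stabilizer of a point of $V *_F Y_{\A_F}$ equals its $F$-stabilizer as a point of $Y_{\A_F}$, which is a flat of $\A_F$; thus the $V$-orbits of $V *_F Y_{\A_F}$ are indexed by the flats $H \in \LL$ with $H \subseteq F$, and I write $O_H$ for the orbit with stabilizer $H$. Now applying associativity \cref{associativity}, $V *_G Y_{\A_G} \cong V *_F (F *_G Y_{\A_G})$. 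The $F$-orbits of the open subset $F *_G Y_{\A_G} \subseteq Y_{\A_F}$ correspond, again by \cref{orbits-and-stabs-of-GHZ}, to the $G$-orbits of $Y_{\A_G}$, which are indexed by the flats of $\A_G$, namely $\{H \in \LL : H \subseteq G\}$. Transporting this back through $V *_F (-)$ via the orbit correspondence once more, I conclude that $V *_G Y_{\A_G}$ is precisely $\bigcup \{O_H : H \in \LL,\ H \subseteq G\}$.

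With this description in hand the lemma is immediate: an orbit $O_H$ lies in $V *_{F'} Y_{\A_{F'}} \cap V *_{F''} Y_{\A_{F''}}$ if and only if $H \subseteq F'$ and $H \subseteq F''$, i.e. $H \subseteq F' \cap F''$, which is exactly the condition for $O_H$ to lie in $V *_{F' \cap F''} Y_{\A_{F' \cap F''}}$. Since an invariant open subset is determined by the orbits it contains, the intersection equals $V *_{F' \cap F''} Y_{\A_{F' \cap F''}}$ as open subsets.

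The only real content is the orbit bookkeeping of the middle paragraph, namely iterating the orbit–stabilizer correspondence through the associativity isomorphism and keeping the two nested homogeneous fiber spaces straight; I expect this to be the main (though mild) obstacle. The passage from equality of orbit sets to equality of open subvarieties is formal, since an open subset of a variety is determined by its underlying point set.
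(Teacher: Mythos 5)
Your proof is correct, but it takes a genuinely different route from the paper's. The paper reduces fiber-wise: a point $[v,z] \in V *_{F} Y_{\A_F}$ lies in $V *_{F'} Y_{\A_{F'}}$ if and only if $z \in Y_{\A_{F'}}$, so the lemma becomes the equality of closed sets $Y_{\A_{F'}} \cap Y_{\A_{F''}} = Y_{\A_{F'\cap F''}}$ inside $Y_{\A_F}$, which is then verified from the explicit coordinate description $Y_\A = \bigcup_{F \in \mathscr{F}} \pi_F(V) \times \{\infty\}^{E \setminus F}$ of \cref{set-theoretic-description}. You instead argue intrinsically: invariant open sets are unions of orbits, the $V$-orbits of $V *_F Y_{\A_F}$ are in bijection with the flats $H \in \LL$ with $H \subseteq F$ via their stabilizers, and $V *_G Y_{\A_G}$ consists of exactly the orbits with $H \subseteq G$; the lemma then reduces to the tautology that $H \subseteq F'$ and $H \subseteq F''$ if and only if $H \subseteq F' \cap F''$. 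The one step you compress is the identification of the image of $V *_G Y_{\A_G}$ with $\bigcup\{O_H : H \subseteq G\}$: this needs that equivariant open embeddings preserve stabilizers and that distinct orbits of $Y_{\A_F}$ have distinct stabilizers (\cref{orbit-flat}, or equivalently that the image of $F *_G Y_{\A_G}$ is the minimal invariant neighborhood $U_{x_G}$ described in \cref{coord-ngbd}) — all of which is available at this point in the paper, so the gap is only expository. The trade-off is that the paper's argument is a short coordinate computation, while yours avoids coordinates entirely at the cost of more orbit--stabilizer bookkeeping through \cref{orbits-and-stabs-of-GHZ} and \cref{associativity}; both ultimately index everything by flats and compare the index sets.
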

\begin{proof}
  A point $[v,z] \in V *_{F} Y_{\A_F}$ lies in $V *_{F'} 
  Y_{\A_{F'}}$ if and only if $z \in Y_{\A_{F'}}$, so we are reduced to showing that 
  $
  Y_{\A_{F'}} \cap Y_{\A_{F''}} = Y_{\A_{F'\cap F''}},
  $
  considered as closed sets in $Y_{\A_F}$. To check this one can use the set 
  theoretic formula of \cref{set-theoretic-description}.
\end{proof}

\subsubsection{Separation}
We now prove that the variety $Y_\LL$ glued from the $V *_F Y_{\A_F}$ is 
separated. By \cite[Corralary II.4.2]{H77}, checking that the diagonal morphism $Y_\LL \to Y_\LL \times 
Y_\LL$ is a closed embedding reduces to the following lemma.

\begin{lem}
  Suppose that $F = F' \cap F''$ are elements of $\LL$, and write
  \[
    i:V *_F Y_{\A_F} \to V *_{F'} Y_{\A_{F'}} \times V *_{F''} 
    Y_{\A_{F''}},\quad 
    [v,y] \mapsto [v,y] \times [v,y].
  \]
  Then $i$ has a closed image.
\end{lem}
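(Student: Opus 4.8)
The plan is to verify the separation (gluing) criterion by showing that the locally closed immersion $i$ has image stable under specialization, via a valuative argument whose essential input is the properness of the slice $Y_{\A_F}$. Write $W = V *_F Y_{\A_F}$, $U' = V *_{F'} Y_{\A_{F'}}$, $U'' = V *_{F''} Y_{\A_{F''}}$, and let $\tau,\tau',\tau''$ denote the canonical fibrations of these three varieties onto $V/F$, $V/F'$, $V/F''$. Each of $W,U',U''$ is separated (a product of an affine space with a projective matroid Schubert variety, after choosing a splitting as in \cref{splitting-product}), so $i$ factors as the diagonal $W \to W \times W$ followed by the product of the two open embeddings $W \hookrightarrow U'$ and $W \hookrightarrow U''$; hence $i$ is a locally closed immersion and $i(W)$ is constructible. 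It therefore suffices to prove $i(W)$ is stable under specialization, which I will test with maps $g:\Spec R \to U' \times U''$ from a discrete valuation ring $R$, with generic point $\eta$ and closed point $0$.

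First I reduce to a single base point. The two open embeddings are compatible with the canonical fibrations, meaning $\tau' \circ \iota' = \pi' \circ \tau$ and $\tau'' \circ \iota'' = \pi'' \circ \tau$, where $\pi':V/F \to V/F'$ and $\pi'':V/F \to V/F''$ are the projections; this unwinds from \cref{associativity} and the construction of the embeddings. Since $F = F' \cap F''$, the induced map $V/F \to V/F' \times V/F''$ is a closed embedding of vector groups. Hence if $g(\eta) \in i(W)$, the composite $(\tau',\tau'') \circ g$ has generic point in the closed subvariety $V/F$, and as $\Spec R$ is integral the whole composite factors through a morphism $\bar g:\Spec R \to V/F$.

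Next I use the $V$-action to move to the fiber over $0$. Choose a linear splitting of $V \to V/F$ and let $\gamma:\Spec R \to V$ be the resulting lift of $\bar g$. Acting diagonally by $-\gamma$, the translated family $g' = (-\gamma)\cdot g$ satisfies $(\tau',\tau'')\circ g' \equiv (0,0)$, so $g'$ factors through $\tau'^{-1}(0) \times \tau''^{-1}(0) \cong Y_{\A_{F'}} \times Y_{\A_{F''}}$. Because $i(W)$ is $V$-stable and $g(\eta) \in i(W)$, we get $g'(\eta) \in i(W) \cap \bigl(Y_{\A_{F'}} \times Y_{\A_{F''}}\bigr)$. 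Unwinding definitions, a point $(\iota'(w),\iota''(w))$ lies in this intersection exactly when $\tau(w) \in \ker\pi' \cap \ker\pi'' = (F'/F)\cap(F''/F) = 0$, i.e. when $w$ lies in the fiber $\tau^{-1}(0) \cong Y_{\A_F}$; thus the intersection equals the image of the diagonal of slice inclusions
\[
  i_0 : Y_{\A_F} \to Y_{\A_{F'}} \times Y_{\A_{F''}},\qquad y \mapsto (\iota'(y),\iota''(y)),
\]
where $\iota':Y_{\A_F}\hookrightarrow Y_{\A_{F'}}$ and $\iota'':Y_{\A_F}\hookrightarrow Y_{\A_{F''}}$ are the inclusions of $Y_{\A_F}$ as a slice.

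The crux is now that $Y_{\A_F}$ is proper (it is a closed subvariety of a product of $\P^1$'s, and also proper by \cref{prop:proper-slice}), so $i_0$, being a morphism from a proper variety into a separated variety, has closed image. Therefore $g'$, whose generic point lies in the closed set $i_0(Y_{\A_F}) \subseteq Y_{\A_{F'}} \times Y_{\A_{F''}}$, factors entirely through $i_0(Y_{\A_F}) \subseteq i(W)$ by integrality of $\Spec R$; in particular $g'(0) \in i(W)$. Translating back by $\gamma(0)$ and using $V$-stability of $i(W)$ gives $g(0) = \gamma(0)\cdot g'(0) \in i(W)$, so $i(W)$ is stable under specialization and hence closed. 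The step I expect to require the most care is the bookkeeping identifying $i(W) \cap (Y_{\A_{F'}} \times Y_{\A_{F''}})$ with $i_0(Y_{\A_F})$ and verifying the fibration compatibility $\tau'\circ\iota' = \pi'\circ\tau$, since both require tracking the associativity isomorphisms of \cref{associativity}; once properness of $Y_{\A_F}$ is available, the remainder is formal.
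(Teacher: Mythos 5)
Your argument is correct, and it is built on exactly the same commutative square as the paper's proof: the canonical fibrations $\tau,\tau',\tau''$, the compatibility $\tau'\circ\iota' = \pi'\circ\tau$, and the observation that $j=(\pi',\pi''):V/F \to V/F'\times V/F''$ is a closed embedding because $F = F'\cap F''$. Where you diverge is in how you extract closedness from this square. The paper notes that $f := j\circ\tau_F = (\tau'\times\tau'')\circ i$ is proper (a proper fibration followed by a closed embedding), and that $\tau'\times\tau''$ is separated, so $i$ is proper by the cancellation property \cite[Corollary II.4.8(e)]{H77} and hence closed --- three lines. You instead establish that $i$ is an immersion, so $i(W)$ is constructible, and then verify stability under specialization by the valuative criterion, using the $V$-action to translate a test family into the central fibers $\tau'^{-1}(0)\times\tau''^{-1}(0)\cong Y_{\A_{F'}}\times Y_{\A_{F''}}$ and invoking properness of $Y_{\A_F}$ there. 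Your translation-to-the-fiber step is, in effect, a hand-proved special case of the general fact that a locally trivial fibration with proper fiber is proper (which is what makes $\tau_F$ proper in the paper's version, via the conjugation to $V/F\times Y_{\A_F}\to V/F$ of \cref{splitting-product}). What your route buys is self-containedness --- it avoids citing the cancellation lemma and makes the role of the slice's properness completely explicit; what it costs is length and the bookkeeping you yourself flag, all of which the paper's formal argument sidesteps. Both the fibration compatibility and the identification $i(W)\cap(Y_{\A_{F'}}\times Y_{\A_{F''}}) = i_0(Y_{\A_F})$ check out when unwound through \cref{associativity}, so there is no gap; you could shorten the writeup considerably by replacing the valuative argument with the cancellation property once you have observed that $(\tau'\times\tau'')\circ i$ is proper.
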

\begin{proof}
  Consider the canonical fibration defined in \cref{sec:slice-def},
  \[
  \tau_F:V *_F Y_{\A_F} \to V/F,\quad [v,y] \mapsto v+F.
  \]
  We get that the following square is commutative.
  \begin{center}
    \begin{tikzcd}
    V *_F Y_{\A_F} \arrow[d, "\tau_F"] \arrow[r, "i"] \arrow[rd, "f"] & 
    V *_{F'} Y_{\A_{F'}} \times V *_{F''} Y_{\A_{F''}} \arrow[d, "\tau_{F'} \times \tau_{F''}"] \\
    V/F \arrow[r, "j"]                                    & V/F' \times 
    V/F''                                           
    \end{tikzcd}
  \end{center}
    Notice that $j$ is a closed embedding, since $F = F' \cap F''$. We
    also have that $\tau_F$ is proper (it's conjugate to the projection
    $V/F \times Y_{\A_F} \to V/F$), so $f$ is proper. Then $i$
    is proper by \cite[Corralary II.4.8(e)]{H77}, and thus closed.
\end{proof}

\subsubsection{Linearity}
Since the action of $V$ extends to each open set in a cover of
$Y_{\LL}$ and is compatible on intersections, the action extends to
$Y_{\LL}$. From the fact that $Y_{\LL}$ is glued from linear
$V$-varieties with maps that preserve $V$, it follows that
$Y_{\LL}$ is normal, that it has finitely many $V$-orbits, and that
every point has a slice. Thus $Y_\LL$ is a linear $V$-variety.

\begin{proof}[Proof of \cref{essential-surjectivity}]
  We have constructed a linear $V$-variety $Y_\LL$, and we wish to show that the collection of stabilizers $\LL(Y_\LL)$ coincides with $\LL$.

  First we check that each flat of $\LL$ is the stabilizer of a point in $Y_\LL$. Suppose that $F \in \LL$. Then consider the point $[0,x_F] \in V *_F Y_{\A_F} \subseteq Y_\LL$, where $x_F$ is the unique fixed point of $Y_{\A_F}$. 
  By \cref{orbits-and-stabs-of-GHZ}, $F$ 
  is the stabilizer of $[0,x_F] \in Y_\LL$. 

  Now we check that the stabilizer of each point of $Y_\LL$ is a flat
  of $\LL$. From the construction of $Y_\LL$, we have that every 
  point is contained in an open set isomorphic to $V *_F Y_{\A_F}$ for 
  $F \in \LL$. Suppose that $[v,y] \in V *_F Y_{\A_F}$. Then the 
  stabilizer of $[v,y]$ with respect to the action of $V$ is equal the 
  stabilizer of $y \in Y_{\A_F}$ with respect to the action of $F$ by 
  \cref{orbits-and-stabs-of-GHZ}, and is therefore equal to a flat in 
  $\A_F$ by \cref{coord-stab}.
\end{proof}
This completes the proof of \cref{main-thm-noncompact}.

\subsubsection{Comparison with toric varieties}\label{construction-comparison}
We conclude this section by explaining how the construction of a
toric variety form a polyhedral fan can be made to look like
the construction above. In order to be consistent with \cite{CLS11},
we will use $n$ for the dimension of a toric variety rather than $d$
as we have been doing so far. We will use $d$ for the dimension of a cone. Let us fix the following notation.
\begin{itemize}[leftmargin=*]
  \item $N \cong \Z^n$, a lattice of dimension $n$,
  \item $T = \Spec \C[N^\vee]$, the corresponding $n$-dimensional torus,
  \item $\Sigma,$ a fan of strictly convex rational polyhedral cones in $N \otimes_\Z\R$,
  \item $\sigma \in \Sigma$, a cone of dimension $d$,
  \item $U_{\sigma,N},$ the toric variety of dimension $n$
    corresponding to $\sigma$ considered as a cone in $N \otimes_\Z
    \R$
    \cite[Theorem 1.2.18]{CLS11},
  \item $x_\sigma \in U_{\sigma,N},$ the distinguished point in the 
    minimal $T$-orbit \cite[Chapter 3.2]{CLS11},
  \item $N_\sigma = \Span_{\Z}(\sigma \cap N),$ the sublattice of dimension $d$ generated by $\sigma$,
  \item $T_{\sigma} = \Spec \C[N_\sigma^\vee]$, the $d$-dimensional torus corresponding to $N_\sigma$,
  \item $U_{\sigma,N_\sigma},$ the toric variety of dimension $d$ corresponding to $\sigma$ considered as a cone in $N_\sigma \otimes_\Z \R$,
  \item $\mathscr{H}_\sigma,$ the unique minimal basis (see \cite[Proposition 1.2.23]{CLS11}) for the semigroup
    \[
    S_{\sigma,N_\sigma} = \{u \in \Hom_{\Z}(N_\sigma, \Z) : u\ \text{is nonnegative on $\sigma$}\}.
    \]
    
\end{itemize}
The fan $\Sigma$ indexes a commutative diagram of inclusions among its cones, as in \cref{A diagram of hyperplane arrangements}.
The cone $\sigma$ defines an embedding of the torus
$T_{\sigma}$ in the larger torus $(\C^\times)^{\mathscr{H}_\sigma}$ (see 
\cite[Definition 1.1.7]{CLS11}),
and the closure of $T_\sigma \subseteq
(\mathbb{A}^1)^{\mathscr{H}_\sigma}$ is $U_{\sigma,N_\sigma}$ (see \cite[Theorem 1.1.17]{CLS11}), similar
to the definition of a matroid Schubert variety. Given $\tau \subseteq \sigma$ elements of $\Sigma$, the
natural homomorphism of tori $T_\tau \subseteq T_\sigma$ extends to a
morphism of toric varieties $U_{\tau, N_\tau} \subseteq U_{\sigma,
  N_\sigma}$, and one can check that $U_{\tau, N_\tau}$ is the minimal slice through
the distinguished point $x_\tau \in U_{\sigma, N_\sigma}$, as in
\cref{A diagram of matroid Schubert varieties}.
As in \cref{A diagram of open embeddings}, for elements $\tau \subseteq \sigma$ of $\Sigma$, we have a natural open embedding
\[
T *_{T_\tau} U_{\tau, N_\tau} \cong T *_{T_\sigma} (T *_{T_\tau} U_{\tau, N_\tau}) \subseteq T *_{T_\sigma} U_{\sigma, N_\sigma}.
\]
One can verify that $U_{\sigma, N_\sigma}$ is the minimal slice through the distinguished point $x_\sigma \in U_{\sigma, N}$, so there is natural isomorphism $T *_{T_\sigma} U_{\sigma, N_\sigma} \cong U_{\sigma, N}$. Thus the above diagram of open embeddings is isomorphic to the usual diagram of open embeddings $U_{\tau, N} \subseteq U_{\sigma, N}$ (see \cite[Section 3.1]{CLS11}), whose colimit is the toric variety $X_\Sigma$.

\appendix
\section{Matroid Schubert varieties in 
coordinates}\label{appendix}

In this appendix we give a coordinate formula for the matroid Schubert 
variety associated to a hyperplane arrangement as a closed subset of 
$(\P^1)^n$, as well as 
coordinate formulas for the orbits, distinguished points, stabilizers, 
minimal $V$-invariant open neighborhoods, and slices.
All of the following formulas are consequences of the defining 
multihomogeneous equations of the matroid Schubert variety given in \cite{AB16},
and they will be familiar to the experts.

We fix the following notation.
\begin{itemize}[leftmargin=*]
  \item \textbf{Coordinates:} Set $E = \{1,\ldots,n\}$, and let $\A = \{H_1, \ldots, H_n\}$ be an essential hyperplane arrangement in $V$. In order to work with coordinates, let us fix an isomorphism of $V/H_i$ with $\C$ for each $i$, and thus we can consider $V \subseteq \C^E$.
\item \textbf{Group action:} Let us identify $\P^1 = \C \cup \{\infty\}$ set theoretically, writing $z$ for $[z:1]$ and $\infty$ for $[1:0]$. In this notation, the action of $\C^E$ on $(\P^1)^E$ is given by coordinate wise addition, using the rule $z + \infty = \infty$ for all $z \in \C$. Since the action of $V$ on $Y_\A$ is restricted from the action of $V$ on $(\P^1)^E$, we have that the action of $V$ on $Y_\A$ is also given by coordinate wise addition.

\item \textbf{Projections:} Given $S \subseteq E$, write $\pi_S:(\P^1)^{E} \to (\P^1)^{S}$ for the coordinate projection. Because we consider $\C^E \subseteq (\P^1)^E$, we will also write $\pi_S(V)$ for the coordinate projection of $V$ onto $\C^S \times \{0\}^{E \setminus S}$.

\item \textbf{Matroid flats:} A \emph{flat} of the matroid associated
  to $V \subseteq \C^E$ is a subset $F \subseteq E$ such that $F = \{i
  \in E : v_i = 0\}$ for some $v \in V$. Write $\mathscr{F}$ for the
  collection of flats of the matroid associated to $V$. There is a
  natural bijection between $\mathscr{F}$ and the lattice of flats of
  $\A$ given by sending $F \in \mathscr{F}$ to $\cap_{i \in F}H_i$.

\end{itemize}

We begin with a set theoretic description of $Y_\A$:
\begin{prop}[\cite{PXY18} Lemma 7.5 and Lemma 7.6] \label{set-theoretic-description}
  Write $Y_\A$ as the closure of the linear subspace $V \subseteq \C^E$. Fix a point $x \in (\P^1)^E$, and write $F \subseteq E$ for the set of indices corresponding to non-infinite entries of $x$. Then $x \in Y_\A$ if and only if $F \in \mathscr{F}$ and $\pi_F(x) \in \pi_F(V)$. Equivalently,
  \[
  Y_\A = \bigcup_{F \in \mathscr{F}} \pi_F(V) \times \{\infty\}^{E \setminus F} \subseteq (\P^1)^E.
  \]
\end{prop}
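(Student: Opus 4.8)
The plan is to deduce the set-theoretic description directly from the multihomogeneous equations of $Y_\A$ recorded in \cite{AB16}. Recall that $V \subseteq \C^E$ is the common zero locus of the linear forms in $V^\perp = \{\ell \in (\C^E)^\vee : \ell|_V = 0\}$, and that for $\ell = \sum_{i \in S} a_i z_i$ with support $S = \supp(\ell)$ (so $a_i \neq 0$ for $i \in S$), its multihomogenization, of degree one in each factor indexed by $S$, is
\[
f_\ell = \sum_{i \in S} a_i\, z_i \prod_{j \in S \setminus \{i\}} w_j,
\]
where $[z_i : w_i]$ are the homogeneous coordinates on the $i$-th $\P^1$. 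By \cite{AB16} the variety $Y_\A$ is cut out set-theoretically by the forms $f_\ell$ as $\ell$ ranges over $V^\perp$ (the forms coming from circuits already suffice, and each $f_\ell$ vanishes on $V$ and hence on $Y_\A$). Thus the statement will follow once we evaluate these equations at an arbitrary point.

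First I would fix $x \in (\P^1)^E$ with finite-coordinate set $F = \{i : x_i \neq \infty\}$, so that in homogeneous coordinates $(z_i, w_i) = (x_i, 1)$ for $i \in F$ and $(z_i, w_i) = (1, 0)$ for $i \in E \setminus F$. The key computation is that the $i$-th term of $f_\ell(x)$ survives exactly when $S \setminus \{i\} \subseteq F$, since the factor $\prod_{j \in S \setminus \{i\}} w_j$ kills any term whose complementary support meets the infinite locus $E \setminus F$. This splits into three cases according to $|S \cap (E \setminus F)|$: if it is at least $2$, every term dies and $f_\ell(x) = 0$ automatically; if it equals $1$, say $S \cap (E \setminus F) = \{j_0\}$, then only the $j_0$-term survives and $f_\ell(x) = a_{j_0} \neq 0$; and if it is $0$, i.e. $S \subseteq F$, then $f_\ell(x) = \sum_{i \in S} a_i x_i = \ell(\pi_F(x))$. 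Consequently $x \in Y_\A$ if and only if no $\ell \in V^\perp$ has support meeting $E \setminus F$ in exactly one point, and every $\ell \in V^\perp$ supported on $F$ vanishes at $\pi_F(x)$.

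It then remains to translate these two conditions into the language of flats. For the first, an $\ell \in V^\perp$ with $\supp(\ell) \cap (E \setminus F) = \{j_0\}$ is precisely a linear dependence expressing $z_{j_0}|_V$ in terms of $\{z_i|_V : i \in F\}$; the nonexistence of such a dependence for every $j_0 \notin F$ says exactly that $F$ is closed in the linear matroid on $\{z_i|_V\} \subseteq V^\vee$, which is equivalent to $F \in \mathscr{F}$ (every set $\{i : v_i = 0\}$ is closed, and conversely each closed set arises this way from a generic $v$ in the corresponding subspace). For the second, the forms in $V^\perp$ supported on $F$ restrict on $\C^F$ to exactly $(\pi_F V)^\perp$, so their simultaneous vanishing at $\pi_F(x)$ is equivalent to $\pi_F(x) \in \pi_F(V)$. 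Combining these, $x \in Y_\A$ if and only if $F \in \mathscr{F}$ and $\pi_F(x) \in \pi_F(V)$; since the pieces $\pi_F(V) \times \{\infty\}^{E \setminus F}$ are disjoint and indexed by the finite-coordinate set, this is exactly the asserted union. (The inclusion $\supseteq$ can alternatively be seen concretely: given a flat $F$ and $p \in \pi_F(V)$, choose $u \in V$ with $\pi_F(u) = p$ and $v \in V$ vanishing exactly on $F$; then $\lim_{t \to \infty}(u + tv) = p \times \{\infty\}^{E \setminus F}$.)

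I expect the main obstacle to be the combinatorial translation of the third paragraph — in particular reconciling the two descriptions of flats, namely closed sets of the linear matroid on the restricted coordinate functions versus coordinate zero-sets $\{i : v_i = 0\}$, and identifying $(\pi_F V)^\perp$ with the forms of $V^\perp$ supported on $F$ — rather than the equation bookkeeping, which becomes routine once the surviving-term rule is isolated.
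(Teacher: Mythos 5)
Your proof is correct, and it is essentially the argument the paper has in mind: the paper does not prove this proposition itself but cites \cite{PXY18} and remarks that the formula is a consequence of the defining multihomogeneous equations from \cite{AB16}, which is exactly the derivation you carry out. The case analysis on $|\supp(\ell)\cap(E\setminus F)|$ is right, and both combinatorial translations in your third paragraph go through: a form in $V^\perp$ whose support meets $E\setminus F$ in a single $j_0$ is the same as a dependence of $z_{j_0}|_V$ on $\{z_i|_V : i\in F\}$, and the two descriptions of flats agree because a generic vector in $\bigcap_{i\in F}\ker(z_i|_V)$ has nonzero $j$-th coordinate for every $j\notin F$ when $F$ is closed. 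One small remark: as written you do not actually need the nontrivial direction of \cite{AB16} (that the circuit forms cut out $Y_\A$ set-theoretically). The containment $Y_\A\subseteq Z(\{f_\ell\})$ is automatic since each $f_\ell$ vanishes on $V$, your case analysis gives $Z(\{f_\ell\})\subseteq\bigcup_F\pi_F(V)\times\{\infty\}^{E\setminus F}$, and your parenthetical limit $\lim_{t\to\infty}(u+tv)$ gives $\bigcup_F\pi_F(V)\times\{\infty\}^{E\setminus F}\subseteq Y_\A$; these three containments close the loop, so the proof is self-contained.
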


Let us now demonstrate in explicit coordinates the objects 
defined in \cref{compactifications-with-slices}.

\begin{cor}\label{coord-interpretations}
  Let $x,y \in Y_\A$, and write $F,F' \subseteq E$ for the set of indices corresponding to non-infinite entries of $x$ and $y$ respectively.
  \begin{corenum}
  \item \label{coord-orbit} The $V$-orbit of $x$ is $V \cdot x = \pi_F(V) \times \{\infty\}^{E \setminus F}$.
  \item \label{coord-dist-pt} The distinguished point in the $V$-orbit of $x$ is
      $x_F = \{0\}^F \times \{\infty\}^{E \setminus F}$.
  \item \label{coord-stab} The stabilizer of $x$ is $V_x = V \cap (\{0\}^F \times \C^{E \setminus F})$.
  \item \label{coord-ngbd} The minimal $V$-invariant open neighborhood 
    $U_x$ of $x$ contains $y$ if and only if $F \subseteq F'$. Equivalently,
    \[
      U_x = \bigcup_{F' \in \mathscr{F}, F \subseteq F'} \pi_{F'}(V) \times 
    \{\infty\}^{E \setminus F'}.
    \]
  \item \label{coord-slice} The minimal slice $Z_x$ through $x$ contains 
    $y$ if and only if $F \subseteq F'$ and $\pi_F(x) = \pi_F(y)$. Equivalently
    \[
      Z_x = \bigcup_{F' \in \mathscr{F}, F \subseteq F'} \left(\pi_{F'}(V) \cap 
    \left(\pi_F(x) \times \C^{F' \setminus F}\right)\right) \times \{\infty\}^{E \setminus 
    F'}.
    \]
  \item \label{coord-slice-dist-pt} Set $Y_{\A_F}$ equal to the closure 
    of $V \cap \C^{E \setminus F}$ in $(\P^1)^{E \setminus F}$. The 
    minimal slice through the distinguish point $x_F = \{0\}^F \times 
    \{\infty\}^{E \setminus F'}$ is
    $
    Z_{x_F} = \{0\}^F \times Y_{\A_F}.
    $
\end{corenum}
\end{cor}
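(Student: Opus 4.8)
The plan is to read off all six formulas from the set-theoretic description in \cref{set-theoretic-description} together with the fact that $V$ acts on $(\P^1)^E$ by coordinatewise addition under the rule $z + \infty = \infty$. The first three parts are immediate consequences of this action. For \cref{coord-orbit}, adding $v \in V$ to $x$ fixes the infinite coordinates indexed by $E \setminus F$ and translates the finite coordinates by $\pi_F(v)$; since $\pi_F(V)$ is a linear subgroup containing $\pi_F(x)$, the orbit is exactly $\pi_F(V) \times \{\infty\}^{E \setminus F}$. For \cref{coord-stab}, the same computation shows that $v$ fixes $x$ if and only if $v_i = 0$ for every $i \in F$, so $V_x = V \cap (\{0\}^F \times \C^{E \setminus F}) = \bigcap_{i \in F} H_i$; in particular $V_x$ depends only on $F$. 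Because the orbits are indexed by the finite set $\mathscr{F}$ of flats, $Y_\A$ satisfies \FO{}.

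The crux is \cref{coord-slice-dist-pt}, which simultaneously establishes \SL{}. Writing $x_F = \{0\}^F \times \{\infty\}^{E \setminus F}$ and $Z_{x_F} = \{0\}^F \times Y_{\A_F}$, I would verify that the natural map $V *_{V_{x_F}} Z_{x_F} \to Y_\A$ is an open embedding by means of \cref{thm:open-criterion}. It is birational since it restricts to an isomorphism on the dense subset $V$; the target is normal by \cite[Theorem 1]{B03} together with \cite[Theorem 1.3(c)]{AB16}; and injectivity on closed points follows from \cref{injectivity-criterion}, because if $v_1 + z_1 = v_2 + z_2$ with $z_1, z_2 \in Z_{x_F}$, then comparing the $F$-coordinates (which vanish on $Z_{x_F}$) gives $\pi_F(v_1) = \pi_F(v_2)$, i.e. $v_1 - v_2 \in V_{x_F}$. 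This shows $Z_{x_F}$ is a slice; since $0 \in V \cap Z_{x_F}$, the point $x_F$ is distinguished, and translating $Z_{x_F}$ by elements of $V$ yields a slice through an arbitrary point, so \SL{} holds and $Y_\A$ is a linear $V$-variety.

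With \FO{}, \SL{}, and normality established, the remaining parts follow from the general theory of \cref{compactifications-with-slices}. For \cref{coord-dist-pt}, the point $x_F$ is the unique distinguished point of its orbit; equivalently, choosing $v \in V$ whose zero-set is exactly $F$ (possible precisely because $F$ is a flat) gives $v \in V_{x_F}^\circ$ and $\lim_{t \to \infty} tv = \{0\}^F \times \{\infty\}^{E \setminus F} = x_F$, in agreement with \cref{slices-to-1psg}. For \cref{coord-ngbd}, a direct limit computation from \cref{set-theoretic-description} shows that the closure of the orbit of flat $F'$ is $\bigcup_{G \in \mathscr{F},\, G \subseteq F'} \pi_G(V) \times \{\infty\}^{E \setminus G}$, so the orbit of flat $F$ lies in this closure exactly when $F \subseteq F'$; under the identification $V_x = \bigcap_{i \in F} H_i$ this is also the translation of the general fact that $V \cdot x \subseteq \overline{V \cdot y}$ if and only if $V_y \subseteq V_x$. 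Finally, \cref{coord-slice} combines \cref{coord-slice-dist-pt} with translation: the minimal slice through a general $x$ in the orbit of flat $F$ is $Z_x = (x - x_F) + Z_{x_F}$, and unwinding the membership $y \in Z_x$ yields precisely the conditions $F \subseteq F'$ and $\pi_F(x) = \pi_F(y)$.

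The step I expect to be the main obstacle is the open-embedding verification in \cref{coord-slice-dist-pt}: while birationality and injectivity are quick to check in coordinates, this is the point at which \SL{} is genuinely proved, and one must confirm that $V *_{V_{x_F}} Z_{x_F}$ is identified with an honest open neighborhood of the orbit rather than mapping merely injectively. Once this is in place, the rest is bookkeeping with the projections $\pi_F$ and the flat combinatorics of $\A$.
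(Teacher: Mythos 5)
Your proposal is correct and follows essentially the same route as the paper: direct coordinatewise computation for the orbit and stabilizer, a limit/closure argument for $U_x$, and the slice verification via \cref{injectivity-criterion} and \cref{thm:open-criterion} by comparing $F$-coordinates. The only cosmetic difference is that you run the slice check at the distinguished point $x_F$ and translate, whereas the paper checks it at a general $x$ and then reads off \cref{coord-dist-pt} and \cref{coord-slice-dist-pt}; since the translation of a slice is a slice, these are interchangeable.
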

\begin{proof}
  We begin with \cref{coord-orbit}. Since $x_i = \infty$ for $i \not\in F$, we have that $v \in V$ acts on $x$ via $(v\cdot x)_i = v_i + x_i$ for $i \in F$ and $(v \cdot x)_i = \infty$ for $i \not\in F$. Thus $V \cdot x \subseteq \pi_F(V) \times \{\infty\}^{E \setminus F}$. For the reverse inclusion, let $y \in \pi_F(V) \times \{\infty\}^{E\setminus F}$, and choose $v \in V$ such that $\pi_F(v) = \pi_F(y) - \pi_F(x)$. Then $v \cdot x = y$.

  We have that $v \cdot x = x$ if and only $v_i + x_i = x_i$ for all $i \in F$, so \cref{coord-stab} follows.

  To prove \cref{coord-ngbd}, let $y \in Y_\A$ and write $F' = \{i \in E 
  : y_i \not= \infty\}$. We wish to show that the set of $y$ for which 
  $F \subseteq F'$ is equal to the minimal open neighborhood $U_x$. We 
  first note that the set of $y$ such
  that $F \subseteq F'$ is a $V$-stable open set of $V \cdot
  x$, so it contains the minimal one. To show the reverse inclusion we 
  must check that if $F \subseteq F'$, then $\pi_F(V) \times 
  \{\infty\}^{E \setminus F} \subseteq \overline{\pi_{F'}(V) \times 
  \{\infty\}^{E \setminus F'}}$. Since $F \in \mathscr{F}$ is a flat, we
  may choose a vector $v \in V$ such that $v_i = 0$ for $i \in F$ and
  $v_i \not= 0$ for $i \not\in F$. Then for each value of $t \in \C$, 
  $\pi_{F'}(tv) \times \{\infty\}^{E \setminus F'} \in \pi_{F'}(V) \times 
  \{\infty\}^{E \setminus F'}$, but as $t \to \infty$, the limit lies in $\pi_F(V) \times \{\infty\}^{E \setminus F}$.

  We now turn to \cref{coord-slice}. We have that
  \[
  Z_x := \{y \in Y_\A : F \subseteq F',\ \pi_F(x) = \pi_F(y)\}
  \]
  is contained in $U_x$ by \cref{coord-ngbd}, and so we just need to
  check that it is a slice. We have that $V_x$ acts on $Z_x$, so we use 
  \cref{injectivity-criterion} to show that the induced map $V *_{V_x}Z_x\to Y_\A$ 
  is injective on closed points. Suppose that $v
  \cdot z = v' \cdot z'$ for $v,v' \in V$ and $z,z' \in Z_x$. Then $\pi_F(v) + \pi_F(z) = \pi_F(v') + \pi_F(z')$, however $\pi_F(z) = \pi_F(z') = \pi_F(x)$, so $\pi_F(v - v') = 0$ as required. Since $V \cap Z_x$ is a coset of $V_x$, we have that $V *_{V_x}Z_x \to Y_\A$ is birational, and thus an open embedding by \cref{thm:open-criterion}.

  Now \cref{coord-dist-pt} and \cref{coord-slice-dist-pt} follow from knowing the slice through $x_F$.
\end{proof}

\printbibliography
\end{document}